\numberwithin{equation}{section}
\newtheorem{theorem}{Theorem}[section]
\newtheorem{proposition}[theorem]{Proposition}
\newtheorem{lemma}[theorem]{Lemma}
\newtheorem{corollary}[theorem]{Corollary}
\newtheorem{remark}[theorem]{Remark}
\def\sideremark#1{\ifvmode\leavevmode\fi\vadjust{\vbox to0pt{\vss% the remark
			\hbox to 0pt{\hskip\hsize\hskip1em%                          will appear only
				\vbox{\hsize2.1cm\tiny\raggedright\pretolerance10000%          on the side
					\noindent #1\hfill}\hss}\vbox to15pt{\vfil}\vss}}}%
\newif\ifcomment \commentfalse
\def\commentON{\commenttrue}
\long\outer\def\BC#1\EC{\ifcomment \sloppy \par \# \ldots\dotfill
	{\em #1} \dotfill \# \par \fi } \commentON
\newcommand{\remove}[1]{}
\definecolor{cadmiumgreen}{rgb}{0.0, 0.42, 0.24}
\definecolor{darkgreen}{rgb}{0.0, 0.5, 0.2}
\definecolor{purple}{rgb}{0.5, 0.0, 0.5}
\def\a{\alpha}
\def\d{\delta}
\def\e{{\varepsilon}}
\def\tuip{u^i_{p}}
\newcommand{\R}{\mathbb{R}}
\newcommand{\loc}{\mathop{\mathrm{loc}}}
\def\L{\Lambda}
\title[Morse index computation for {H\'e}non problem ]{Morse index computation for radial solutions of the {H\'e}non problem in the disk}
\author{Anna Lisa Amadori}
\address[Anna Lisa Amadori]{Dipartimento di Scienze Applicate, Universit\`a di Napoli \textquotedblleft Parthenope\textquotedblright, Centro Direzionale di Napoli, Isola C4, 80143 Napoli, Italy.}
\email{annalisa.amadori@uniparthenope.it}
\author{Francesca De Marchis}
\address[Francesca De Marchis]{Dipartimento di Matematica {\sl Guido Castelnuovo}, Sapienza Università di Roma, Piazzale Aldo Moro 5, 00185 Roma}
\email{demarchis@mat.uniroma1.it}
\author{Isabella Ianni}
\address[Isabella Ianni]{Dipartimento SBAI, Sapienza Università di Roma, via Scarpa 10, 00161 Roma}
\email{isabella.ianni@uniroma1.it}
\thanks{2010 \textit{Mathematics Subject classification:} 35B05, 35B06, 35J91. }
\thanks{ \textit{Keywords}: superlinear elliptic boundary value problem, sign-changing radial solution, asymptotic analysis, Morse index}
\thanks{The last author is partially supported by:  PRIN $2017$JPCAPN$\_003$ grant,   VALERE:\emph{Vain-Hopes} grant, INDAM - GNAMPA}
\begin{document}

\begin{abstract}
We compute the Morse index $\textsf{m}(u_{p})$ of any radial solution $u_{p}$ of the semilinear  problem:
\begin{equation}
\label{problemaAbstract}\tag{P}
\left\{
\begin{array}{lr}
-\Delta u=|x|^{\alpha}|u|^{p-1}u & \mbox{in } B\\
u=0 & \mbox{ on }\partial B
\end{array}
\right.
\end{equation}
where $B$ is the unit ball of $\mathbb R^{2}$ centered at the origin, $\alpha\geq 0$ is fixed and $p>1$  is sufficiently large. In the case $\alpha=0$, i.e. for the \emph{Lane-Emden problem}, this leads to the following Morse index formula
\[\textsf{m}(u_{p}) = 4m^{2}-m-2, \] 
for $p$ large enough, where  $m$ is the number of nodal domains of $u$.
\end{abstract}

\maketitle

\section{Motivations and main results} \label{section:intro}
%\edz{\textcolor{red}{per non appesantire credo che sia meglio chiamare le soluzioni $u_p$ e non $u_{\alpha,p}$: il remark sulla stima di Ederson (che e' l'unico posto dove effettivamente serve) lo metterei meno in dettaglio nella introduzione e rimanderei a un remark specifico dentro la sezione Henon. In questo modo l'introduzione mi sembra più leggibile.}}
We consider the following classical semilinear elliptic problem
\begin{equation}
\label{problemaHenonIntro}
\left\{
\begin{array}{lr}
-\Delta u=|x|^{\alpha}|u|^{p-1}u & \mbox{in } B\\
u=0 & \mbox{ on }\partial B
\end{array}
\right.
\end{equation}
where $\alpha\geq 0$, $p>1$ and $B$ is the unit ball of $\mathbb R^{N}$, $N \geq 2$, centered at the origin.

\

When $\alpha>0$ \eqref{problemaHenonIntro}  has been introduced by H\'enon in \cite{Hen73} in the study of stellar clusters thus it is known as the \emph{H\'enon problem},  when $\alpha=0$ \eqref{problemaHenonIntro} reduces to the classical  \emph{Lane-Emden problem}.

\

From a mathematical point of view it is well known that, for any fixed $\alpha\geq 0$, problem \eqref{problemaHenonIntro} admits solutions,
and in particular radial solutions, for every $p>1$  if $N=2$, and for every $p \in (1, p_{\alpha})$ if $N\geq 3$, where 
$p_{\alpha} = \frac{N+2+2\alpha}{N-2} $  (see \cite{Ni82}). Moreover for
any given $m \geq 1 $ there is exactly one couple of radial solutions of \eqref{problemaHenonIntro} which have exactly $m$ nodal zones, they are classical solutions and they are one the opposite of
the other (see for instance \cite{BW93, NiN1985, Kajikiya}).
\

Observe  that the two  problems ($\alpha=0$ and $\alpha>0$) have a strong correlation, indeed the change of variable 
\begin{equation}
\label{changeIntro}
v(t)= \left(\frac{2}{2+\alpha}\right)^{\frac{2}{p-1}}  u(r),\quad t=r^{\frac{2+\alpha}{2}},
\end{equation}
transforms radial solutions $u$ of the H\'enon problem in dimension $N$ into radial solutions $v$  of the Lane-Emden problem in dimension  $M = M(N,\alpha) := \frac{2(N+ \alpha)}{2+\alpha}$, with the same number of zeros. Notice that $M=N$ when $N=2$, while  $M<N$ for any $N\geq 3$  and in this case $M$  may be a non integer extended dimension.

\

This paper deals with the computation of the Morse index of all the radial solutions of \eqref{problemaHenonIntro} in dimension $N=2$, for any $\alpha\geq 0$ fixed  and for large values of the exponent $p$.

\

We recall that the Morse index $\textsf{m}(u)$ of a solution $u$ of \eqref{problemaHenonIntro} is the maximal dimension of a subspace $X\subset  H^{1}_{0}(B)$ where the quadratic form $Q_{u}:H^{1}_{0}(B)\times H^{1}_{0}(B)\rightarrow \mathbb R$
\[Q_{u}(v,w)=\int_{B}\big( \nabla v\nabla w-|x|^{\alpha}p|u|^{p-1}vw\big)\,dx\]
is negative definite. Equivalently, since $B$ is a bounded domain, $\textsf{m}(u)$ can be defined as the number of the negative Dirichlet eigenvalues of  the linearized operator at $u$
\[L_{u} = -\Delta - |x|^{\alpha} p|u|^{p-1}\] counted with their multiplicity.

\

The knowledge of the Morse index    has important applications: it allows to distinguish and classify solutions and to study their stability properties. Moreover it is well known that a change in the Morse index may imply bifurcation, which may also give rise to symmetry breaking phenomena (\cite{GI20, Ama20bif, AG14, KW19, FN19}).

Focusing on radial solutions $u_{p}$ of problem \eqref{problemaHenonIntro}, it is known, from \cite{HRS11, BW03} in the case $\alpha=0$  and \cite{AG-sing2} in the case $\alpha>0$, that the \emph{radial Morse index} $\textsf{m}_{\mathrm{rad}}(u_{p})$ (i.e. the number of the negative eigenvalues of $L_{u_{p}}$ in the subspace $H^{1}_{0,\mathrm{rad}}(B)$ of the radial functions in $H^{1}_{0}(B)$), coincides with the number $m$ of nodal zones of $u_{p}$:
\[\textsf{m}_{\mathrm{rad}}(u_{p})=m\]
and moreover the solution $u_{p}$ is radially nondegenerate. 
Nevertheless the complete Morse index of a radial solution $u_{p}$ is generally higher, and indeed the following lower bound holds true
\begin{equation}
\label{MorseLoweBound}
\textsf{m}(u_{p})\geq \left\{\begin{array}{lr}
m+(m-1)N & \mbox{ if } \alpha\in [0,2)\\
m+ (m-1)\bigg(N +  \displaystyle\sum_{j=1}^{[\frac{\alpha}{2}]}N_{j+1}\bigg)& \mbox{ if } \alpha\geq 2
\end{array}\right.
\end{equation}
as proved
%, also for more general semilinear problems, 
in \cite{DIP-N>3} for the case $\alpha=0$ (see also \cite{AP04, BdG06} for previous results in this direction) and then for the case $\alpha>0$ in \cite{AG-sing2}  by exploiting the relation in \eqref{changeIntro} (see also \cite{dSP17}). Here $N$ stands for the dimension, $N_{j}=\frac{(N+2j-2)(N+j-3)!}{(N-2)!j!}$ is the multiplicity of the $j$-th eigenvalue $\lambda_{j}=j(N+j-2)$ of the Laplace-Beltrami operator on the sphere $\mathbb S_{N-1}$ and $[\cdot]$ is the integer part.\\
Observe that, by Morse index comparison, one deduces from the estimates \eqref{MorseLoweBound} that a least energy nodal (i.e. $m\geq 2$) solution for problem \eqref{problemaHenonIntro}, having  Morse index $2$ (cfr. \cite{BW03}), can not be radial (see \cite{AG-sing2} and also \cite{AP04, BdG06, dSP17}).

\

For the Lane-Emden problem ($\alpha=0$) and in dimension $N\geq 3$  the estimate \eqref{MorseLoweBound} is surprisingly optimal, indeed in \cite{DIP-N>3}  the following Morse index formula has been proven:
\begin{equation}
\label{LaneEmdenFormula}\textsf{m}(u_{p})=m+(m-1)N,\qquad \mbox{ for }p\in [\bar p, p_{\alpha}),
\end{equation}
for a certain $\bar p:=\bar p(m,N)>1$.\\
This formula has been then generalized to the H\'enon case ($\alpha>0$) in \cite{AG_N3}, obtaining, again in dimension $N\geq 3$, that
%\begin{equation}
%\label{HenonFormulapcrit}
%\textsf{m}(u_{p})=\displaystyle\left\{\begin{array}{lr}
%m\displaystyle\sum_{j=0}^{1+\left[\frac{\alpha}{2}\right]}N_{j} &\mbox{ if }0<\alpha\neq 2k, \quad \forall k\in\mathbb N\cup\{0\}\\\\
%m\displaystyle\sum_{j=0}^{\frac{\alpha}{2}}N_{j} + (m-1)N_{1+\frac{\alpha}{2}}&\mbox{ if }\alpha=2k,\quad  \forall k\in\mathbb N\cup\{0\}\end{array}\right.
%\end{equation}
%{\AL per confrontarla meglio con la \eqref{formulaHenonIntro1} si potrebbe scrivere, unificando, 
	\begin{equation}\label{HenonFormulapcrit} \textsf{m}(u_{p})  = m + (m-1) \sum\limits_{j=1}^{\left[\frac{2+ \alpha}{2}\right]} N_j + \sum\limits_{j=1}^{\left\lceil\frac{\alpha}{2}\right\rceil} N_j
	\end{equation}  
 for $p\in [\bar p, p_{\alpha})$, where $\bar p:=\bar p(m,N,\alpha)>1$. Here $[\cdot]$ is the integer part and $\lceil \cdot \rceil$ the ceiling function. Observe that for any $\alpha>0$ the Morse index in this formula is actually higher then the lower value found in \eqref{MorseLoweBound};  in particular \eqref{HenonFormulapcrit} implies, again by Morse index comparison, that the ground state (positive) solution of the H\'enon problem, which has Morse index $1$, is not radial for $p\in [\bar p, p_{\alpha})$. Indeed
for the positive (i.e. $m=1$)  radial solution $u_{p}$  \eqref{HenonFormulapcrit} gives 
 \[\textsf{m}(u_{p})\geq 1+N\ (>1)\]
 (see also \cite{SSW02}, where the same conclusion is derived via energy comparison). 

\

Formulas \eqref{LaneEmdenFormula} and \eqref{HenonFormulapcrit} have been derived both  by the study of an \emph{auxiliary singular eigenvalue problem} associated to the linearized operator $L_{u_{p}}$ which, in the radial setting,  can be decomposed into a radial and an angular part. In particular, the study of the radial part strongly depends on the qualitative properties of the solution $u_{p}$, and the proofs of both the formulas specifically exploit the knowledge of the asymptotic behavior of $u_{p}$ as $p\rightarrow p_{\alpha}$ from the left.
\\ In dimension $N\geq 3$ this behavior is indeed well known:  all the radial solutions of \eqref{problemaHenonIntro} blow-up at the origin as $p\rightarrow p_{\alpha}$ and vanish elsewhere, moreover each radial solution with $m$ nodal zones is a \emph{tower of $m$ bubbles}, i.e., in short, it looks like $m$ superpositions of the same limit profile 
\begin{equation}\label{bubbleN}U_{\alpha}(x)= \left(1+\frac{|x|^{2+\a}}{(N+\alpha)(N-2)}\right)^{-\frac{N-2}{2+\a}} ,\end{equation} with alternate sign and scaled with different speeds (for $\alpha=0$ see for instance \cite{AP87, DIP-N>3, GST20}, for $\alpha >0$ see \cite{AG14,AG_N3}).
Observe that  $U_{\alpha}$ is a solution of the critical equation 
\begin{equation}\label{eqCriticaN3}-\Delta U_{\alpha} =|x|^{\alpha}U_{\alpha}^{p_{\alpha}},\qquad  x\in \mathbb R^{N}.\end{equation} 

\

In this paper we focus on the $2$-dimensional case and derive the analogous of formulas \eqref{LaneEmdenFormula} and \eqref{HenonFormulapcrit}. 

\

In dimension $N=2$  the asymptotic behavior of the radial solutions of  \eqref{problemaHenonIntro} as $p\rightarrow +\infty$ (in this case the exponent $p_{\alpha}$ is substituted with $+\infty$) is different: one can show that all these solutions do not blow-up but concentrate at the origin and vanish elsewhere. Moreover, since $p_{\a}=+\infty$, the bubbling behavior is more delicate to be described and indeed  profiles different than the solutions of \eqref{eqCriticaN3} are involved, as shown in \cite{GGP14, AG-N2} for the solution with $m=2$ nodal regions. \\Very recently in \cite{IS-arxiv} the results in \cite{GGP14, AG-N2} have been extended  to all the radial solutions of \eqref{problemaHenonIntro}, showing that the radial solution $u_{p}$ with $m$ nodal zones (for any $m\geq 1$) develops a  \emph{tower of $m$ bubbles}, one in each nodal zone, similarly as in dimension $N\geq 3$ but, unlike the higher dimensional case, the profile of each bubble is now different, and given by \begin{equation}\label{bubblecompletaINTRO} Z_{\alpha,i}(x) = \log \frac{2\theta_{i}^2\gamma_{i} \, |x|^{\frac{(\alpha+2)}{2}(\theta_{i}-2)}}{(\gamma_{i} + |x|^{\frac{(\alpha+2)}{2}\theta_{i}})^2},\quad \mbox{ with }\ \ 
\gamma_{i}= \frac{\theta_{i}+2}{\theta_{i}-2} \left(\frac{\theta_{i}^2-4}{2}\right)^{\frac{\theta_{i}}{2}},\end{equation}
 for $i=0,\ldots, m-1$, where the sequence $(\theta_{i})_{i\in\mathbb N}$ is uniquely determined by the following iteration 
 \begin{equation}\label{introtheta}
 \left\{\begin{array}{lr}
 \theta_0=2\\
 \theta_i=\frac{2}{\mathcal L\left[\frac{2}{2+\theta_{i-1}}e^{-\frac{2}{2+\theta_{i-1}}}\right]}+2 & \mbox{ for } i\geq 1
 \end{array}
 \right.
 \end{equation}
($\mathcal  L$ is the Lambert function) and 
$Z_{\alpha,i}$ is a radial solution of the singular Liouville equation
\begin{equation}\label{singLiouvilleINTRO}-\Delta Z_{\alpha,i}=\left(\frac{\alpha +2 }{2}\right)^2|x|^{\alpha}e^{Z_{\alpha,i}} + 	 (\alpha+2)\pi (2-\theta_{i})\delta_{0}\qquad \mbox{ in } \mathbb R^{2},\end{equation}
see Section \ref{Section:Notazioni&Idee} for more details.

% %
% \edz{(this is an easy consequence of the symmetry, of a uniform bound in $p$ for the energy of the solution and of the general asymptotic description got  in \cite{DIP15JEMS} for sign-changing solutions of the Lane-Emden case and the change of variable for the case $\alpha>0$) {\AL in realt\`a l'estensione ad H\'enon in dimensione alta non \`e banale come in dim. 2, comunque queste cose non le direi}{\F concordo con non dirle qui   }}
% %

\

As a consequence of this sharp asymptotic analysis one expects that in dimension $N=2$ formulas \eqref{LaneEmdenFormula} and \eqref{HenonFormulapcrit} do not hold, and that the constants $\theta_{i}$'s  must be involved in the Morse index computations, for large values of $p$.

\
 
Indeed this is exactly what has been observed in the case of the radial solution $u_{p}$ with $m=2$ nodal zones, whose Morse index has been computed  in 
 \cite{DIP-N2} for the Lane-Emden problem ($\alpha=0$) and  in \cite{AG-N2}  for the H\'enon problem ($\alpha>0$). The results in \cite{DIP-N2, AG-N2} may be summarized as follows
 \begin{equation}\label{2zone}
  	\textsf{m}	(u_p)   
\begin{cases}
\displaystyle = 2+  2\left\lceil\frac{\alpha}{2}\right\rceil + 2\left[\frac {2+\a}4 \theta_{1}\right] 
 & \text{ if } \frac {2+\a}4 \theta_{1}  \notin \mathbb N   \\
 \displaystyle 	\in \left[ 2\left\lceil\frac{\alpha}{2}\right\rceil + \frac {2+\a}2 \theta_{1}  ,\ 2 + 2\left\lceil\frac{\alpha}{2}\right\rceil + \frac {2+\a}2 \theta_{1} \right] 
  	& \text{ otherwise} 
 	\end{cases}
\end{equation}
 for $p\geq\bar p(\a)\,(>1)$. 

\

So far  in dimension $N=2$ the value of the Morse index for all the  radial solutions $u_p$ of \eqref{problemaHenonIntro} with any number $m> 2$ of nodal zones, for $p$ large, was unknown.  
Here we fill in this gap showing that

\begin{theorem}\label{theorem_Main_Intro}
Let $N=2$, $\alpha\geq 0$ and let $u_{p}$ be a radial solution to \eqref{problemaHenonIntro} with $m$ nodal zones. Let $(\theta_i)_{i\in\mathbb N}$ be the sequence in \eqref{introtheta}. Then there exists $\bar p=\bar p(m,\alpha)>1$ such that 	 for $p\geq\bar p$
 	 \begin{equation}\label{formulaHenonIntro1} 
 	\textsf{m}(u_{p}) = m + 2 \left\lceil \frac{\a}2\right\rceil + 2 \sum\limits_{i=1}^{m-1}\left[ \frac{2\!+\!\a }4 \theta_i \right] 
 	\end{equation} if $\frac {2+\a}4 \theta_i \notin \mathbb N$, for every $i=1,\dots, m-1$. Otherwise, if  $\frac {2+\a}4 \theta_i  \in \mathbb N$ for some index $i$, then  %\edz{non so neanch'io se mi piace scritta cos\`\i. Nel primo termine serve la funz. soffitto, negli altri termini \`e indifferente parte intera o soffitto di $...-1$ (idem con $m=2$)}
 \begin{equation} 	\label{formulaHenonIntro2} 
 	\textsf{m}(u_{p}) - \left(m + 2 \left\lceil \frac{\a}2\right\rceil + 2 \sum\limits_{i=1}^{m-1} \left[ \frac{2\!+\!\a }4 \theta_i \right]  \right) \in \Bigg[  - 2 \# \left\{ i = 1, \dots m-1 \, \Big| \, \frac {2\!+\!\a}4 \theta_i  \in \mathbb N\right\}   , \, 0    \Bigg],
 	\end{equation}
 	where $[\cdot]$ is the integer part and $\lceil \cdot \rceil$ the ceiling function.
In particular when $\alpha=0$ \eqref{formulaHenonIntro1} holds and  it reduces to
\begin{equation}
\label{formulaLaneEmdenIntro}
\textsf{m}(u_{p}) = 4m^{2}-m-2, \quad \forall\ p\geq \bar p.
\end{equation}
\end{theorem}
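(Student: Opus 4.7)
The strategy is to exploit the rotational symmetry of the linearised operator and decompose $L_{u_p}$ along the angular Fourier modes on $\mathbb{S}^1$. Since the eigenvalues of $-\Delta_{\mathbb{S}^1}$ are $k^2$, $k\geq 0$, with multiplicity $1$ for $k=0$ and $2$ for $k\geq 1$, one has
\[
\textsf{m}(u_p)\;=\;n_0(p)+2\sum_{k\geq 1}n_k(p),
\]
where $n_k(p)$ denotes the number of negative Dirichlet eigenvalues of the singular Sturm--Liouville operator $L_k^p:=-\partial_r^2-\tfrac{1}{r}\partial_r+\tfrac{k^2}{r^2}-p\,r^\alpha|u_p|^{p-1}$ acting on radial functions. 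By the radial Morse-index and nondegeneracy results recalled in the Introduction, $n_0(p)=m$, so the whole task reduces to computing $n_k(p)$ for every $k\geq 1$ and $p$ large.

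Following the blueprint of \cite{DIP-N2,AG-N2,AG_N3}, I would translate this negative-eigenvalue count into the counting of eigenvalues $\nu<1$ for the weighted problem
\[
-v''-\tfrac{1}{r}v'+\tfrac{k^2}{r^2}v\;=\;\nu\,p\,r^\alpha|u_p|^{p-1}v,\qquad v(1)=0,
\]
so that $n_k(p)$ coincides with the number of such $\nu$'s. This reformulation is the key bridge to the concentration analysis, because the weight $W_p(r):=p\,r^\alpha|u_p|^{p-1}$ inherits the tower-of-bubbles structure of $u_p$ proved in \cite{IS-arxiv}. Writing $u_p$ as a superposition of $m$ bubbles concentrating at well-separated scales $\mu_{p,0}\gg\mu_{p,1}\gg\dots\gg\mu_{p,m-1}\to 0$, the rescaling $s=r/\mu_{p,i}$ adapted to the $i$-th bubble turns the weighted problem into one that, in the limit $p\to\infty$, reads
\[
-w''-\tfrac{1}{s}w'+\tfrac{k^2}{s^2}w\;=\;\nu\,\Big(\tfrac{\alpha+2}{2}\Big)^{\!2}s^\alpha e^{Z_{\alpha,i}(s)}\,w\qquad\text{on }(0,\infty).
\]

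Since each $Z_{\alpha,i}$ is an explicit (singular) Liouville profile, this limit problem has an explicit spectrum: a direct Sturm/hypergeometric computation shows that it admits exactly one eigenvalue $\nu<1$ for every $k$ with $1\leq k\leq [\tfrac{2+\alpha}{4}\theta_i]$, and none for larger $k$. For the regular first bubble ($i=0$, $\theta_0=2$) this count is sharpened to $\lceil\alpha/2\rceil$ by identifying explicitly the $|x|^\alpha$-weighted angular kernel generated by the symmetries of the profile $U_\alpha$, along the same lines as in the higher-dimensional analogues \cite{AG_N3}. Summing over the $m$ bubbles and over the modes $k\geq 1$ (with their multiplicity $2$) then yields the announced formula \eqref{formulaHenonIntro1}; the Lane--Emden identity \eqref{formulaLaneEmdenIntro} follows at $\alpha=0$ from the separate arithmetic fact $[\theta_i/2]=4i+1$ for every $i\geq 1$, which I would verify inductively from the Lambert recursion \eqref{introtheta}, giving
\[
\textsf{m}(u_p)\;=\;m+2\sum_{i=1}^{m-1}(4i+1)\;=\;m+2(m-1)(2m+1)\;=\;4m^2-m-2.
\]

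The main obstacle in this program, and also the source of the interval \eqref{formulaHenonIntro2}, is justifying the $p\to\infty$ passage in the weighted eigenvalue problem. One needs an inner/outer decomposition of the candidate eigenfunctions compatible with the $m$ distinct concentration scales, together with the proof that the global spectrum splits, as $p\to\infty$, into the disjoint union of the $m$ limit spectra plus outer eigenvalues $\gg 1$ that do not contribute to $n_k(p)$; ruling out ``spurious'' eigenvalues concentrating in the annular regions between two consecutive bubbles is where most of the hard work lies. When $\tfrac{2+\alpha}{4}\theta_i\in\mathbb{N}$ the $i$-th limit problem has an eigenvalue exactly at $\nu=1$, whose sign at finite $p$ is below the precision of the leading-order concentration analysis, so each such degenerate bubble may contribute either $[\tfrac{2+\alpha}{4}\theta_i]$ or $[\tfrac{2+\alpha}{4}\theta_i]-1$ to $n_k(p)$, accounting exactly for the $\pm 2$ ambiguity per degenerate index $i$ recorded in \eqref{formulaHenonIntro2}.
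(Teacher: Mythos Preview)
Your outline is plausible in spirit but departs from the paper's method at the crucial reformulation step, and this difference is precisely what makes your acknowledged ``main obstacle'' avoidable.

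You propose, for each angular mode $k$, to count eigenvalues $\nu<1$ of the problem weighted by $W_p(r)=p\,r^\alpha|u_p|^{p-1}$, and then to show that this spectrum splits into $m$ pieces, one per bubble. That is \emph{not} the blueprint of \cite{DIP-N2,AG-N2,AG_N3}; those papers, and this one, use instead the \emph{singular} auxiliary problem with weight $|x|^{-2}$, namely
\[
-(r\psi')' = r\Big(V_p(r)+\frac{\nu}{r^2}\Big)\psi,\qquad \psi\in\mathcal H_{0,\mathrm{rad}},
\]
so that the eigenvalue sits in front of $r^{-2}$ rather than $V_p$. The payoff is substantial: this problem has \emph{exactly} $m$ negative eigenvalues $\nu_1(p)<\dots<\nu_m(p)<0$ (one per nodal zone, by the known radial Morse index), and the Morse index becomes the closed formula
\[
\textsf m(u_p)=m+2\sum_{j=1}^{m}\Big\lceil\tfrac{2+\alpha}{2}\sqrt{-\nu_j(p)}-1\Big\rceil.
\]
Thus instead of proving a spectral splitting for every $k$, the paper only needs to compute $m$ limits $\lim_{p\to\infty}\nu_j(p)$. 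The central result (Theorem~\ref{thm:autovaloriAutofunzioni}) is that $\nu_j(p)\to -(\theta_{m-j}/2)^2$: the $j$-th eigenfunction synchronizes with a \emph{single} bubble, the $(m-j)$-th one. This is established by an inductive scheme: an upper bound $\limsup\nu_h(p)\le\beta^{m-h}$ via an explicit test function built from $\eta^{m-h}$ cut off near the $(m-h)$-th scale (plus orthogonality corrections), followed by a lower bound obtained by localising $\int r V_p\psi_{h,p}^2\,dr$ on the sets $[K^{-1}\varepsilon_{i,p},K\varepsilon_{i,p}]$ and using the orthogonality \eqref{normalization} to exclude the wrong bubble. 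The ``annular'' control you flag is handled once, via the pointwise bound on $f_p(r)=pr^2|u_p|^{p-1}$ (Lemma~\ref{lemma-stima-fp}) and Lemma~\ref{Lemma:NormaL2pesataPiccolaSuInsiemini}, rather than for each $k$.

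Two smaller points. First, your explanation of the $\lceil\alpha/2\rceil$ contribution via ``the kernel generated by the symmetries of the profile $U_\alpha$'' is off: $U_\alpha$ is the $N\ge3$ profile; here the first bubble is the regular Liouville solution $Z_{\alpha,0}$. In the paper's framework the $\lceil\alpha/2\rceil$ arises simply because $\nu_m(p)\in(-1,0)$ for all $p$ (see \eqref{est-rse}), so $\sqrt{-\nu_m(p)}\to1$ \emph{from below} and the ceiling snaps to $\lceil\alpha/2\rceil$. Second, your arithmetic for $\alpha=0$ is correct and matches the paper: $[\theta_i/2]=4i+1$ follows from the bounds $8i+2<\theta_i<8i+4$ in \eqref{useful2}, not from an induction on the Lambert recursion.
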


$\;$\\When $m=2$ Theorem \ref{theorem_Main_Intro} gives back \eqref{2zone}.

\

Observe that, since $\theta_{i}>2$ for every $i\geq 1$ (see \eqref{introtheta}), it follows that each  value given by \eqref{formulaHenonIntro1}  is strictly higher than the corresponding  value in the higher dimensional case given in formulas \eqref{LaneEmdenFormula}-\eqref{HenonFormulapcrit}, and hence also higher than the Morse index lower bound in \eqref{MorseLoweBound}. We stress that  in dimension 2, and for $\a>0$, the bound \eqref{MorseLoweBound} has been recently improved in \cite{dSdS19}, by exploiting the monotonicity of the Morse index with respect to the parameter $\a$. It is not difficult to check that for  $\a>0$ the value in \eqref{formulaHenonIntro1} is in general also strictly  higher than the corresponding value obtained in \cite{dSdS19}  (see Remark \ref{Ederson}).%{\color{red}(see Remark \ref{RemarkStimaEderson} in Section \ref{varie})}.

\

Formula \eqref{formulaHenonIntro1} exhibits two kinds of discontinuity w.r.t. the parameter $\a$: one, occurring when $\a$ is an even integer, is a common phenomenon also with the higher dimensional case (\cite{AG_N3}); the other, occurring along the sequences $\a_{i,n} = 4n/\theta_i-2$,  is instead peculiar of dimension $2$ .

\
 
The interest in Theorem \ref{theorem_Main_Intro} is not just theoretical: the exact knowledge of the Morse index can be used in order to get multiplicity results for \eqref{problemaHenonIntro}, thus clarifying the structure of the set of its solutions. 
\\
This can be obtained for instance both via nonradial bifurcation from radial solutions associated to a change in the Morse index, and via minimization procedures in suitable symmetric settings combined with Morse index comparisons. These approaches have been explored in dimension  $N\ge3$ for all the radial solutions, while in dimension $N=2$ only the case of the radial  solution with  $m=2$ nodal zones  has been investigated so far (see \cite{GI20, AG-N2, Ama20p1, Ama20bif}).  Nevertheless   
there are numerical evidences that similar phenomena hold also when considering  radial solutions with more than $2$ nodal zones in dimension $N=2$ (see \cite{FazekasPacellaPlum_ultimo}), 
%{\color{red}{da aggiornare}})
 and  in a  subsequent paper we plan to exploit the results in Theorem  \ref{theorem_Main_Intro} to treat this case.

 \

The proof of Theorem \ref{theorem_Main_Intro} follows a similar strategy to the one developed to get \eqref{2zone}  and \eqref{LaneEmdenFormula}-\eqref{HenonFormulapcrit}:  thanks to the change of variable \eqref{changeIntro} we can reduce to consider the Lane-Emden case ($\alpha=0$); then, after a spectral decomposition of an auxiliary singular eigenvalue problem associated to the linearized operator, we are finally lead to study the negative eigenvalues $\nu$ of the following  radial singular problem
\begin{equation}\label{singEigenvalueINTRO}
\begin{cases}
-(r \, \psi' )'= r \left( p| u_{p}|^{p-1} + \frac{\nu}{r^2} \right) \psi & \text{ as } 0 < r < 1 , \\
\psi=0 & \mbox{ if }r=1, \end{cases}
\end{equation}
where $u_{p}$ is the radial solution  of \eqref{problemaHenonIntro} (with $\alpha=0$) with $m$ nodal zones (see Section \ref{subsec:Morse} for more  details).
It is possible to show that \emph{negative} eigenvalues for problem \eqref{singEigenvalueINTRO} may be defined and are simple (\cite{GGN16}), moreover they are exactly $m$ which we denote by $\nu_{j}$, $j=1,\ldots, m$. The eigenvalues $\nu_{j}$ (and eigenfunctions $ \psi_{j}$) of \eqref{singEigenvalueINTRO}  obviously depend on  $u_{p}$, the core of the proof of Theorem \ref{theorem_Main_Intro} is thus the investigation of their asymptotic behavior   as $p\rightarrow +\infty$. 
We prove that 
\begin{theorem}
	\label{INTROthm:autovaloriAutofunzioni}
	For any $j=1,\ldots, m$
	\begin{equation}\label{limj}
	\lim_{p\rightarrow +\infty} \nu_{j}(p) = - \left(\frac{\theta_{m-j}}2\right)^2,
	\end{equation}
	where $(\theta_i)_{i\in\mathbb N}$ is the sequence in \eqref{introtheta}.
\end{theorem}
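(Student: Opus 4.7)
The plan is to run a scale-by-scale blow-up analysis of the singular eigenvalue problem \eqref{singEigenvalueINTRO} around each bubble of the tower-of-bubbles decomposition of $u_p$. First, via the change of variable \eqref{changeIntro} I reduce to $\alpha=0$, so that $u_p$ is a radial solution of the planar Lane--Emden problem on $B$ with $m$ nodal zones. By \cite{IS-arxiv}, as $p\to+\infty$ the solution $u_p$ asymptotically develops $m$ singular Liouville profiles $Z_{0,i}$ (see \eqref{bubblecompletaINTRO}) at distinct concentration scales $\mu_i(p)\to 0$, one per nodal annulus ($i=0,\dots,m-1$). The structural observation making the strategy work is that both $-\Delta$ and the Hardy weight $\nu/r^2$ appearing in \eqref{singEigenvalueINTRO} are invariant under $r\mapsto \mu_i r$, so the eigenvalue $\nu$ is preserved when zooming on any fixed bubble.

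Concretely, for $i$ fixed I set $t=r/\mu_i(p)$ and $\tilde\psi(t)=\psi(\mu_i(p)\, t)$; then \eqref{singEigenvalueINTRO} rewrites as
\[-\bigl(t\tilde\psi'\bigr)' \,=\, t\bigl(\mu_i^2\,p\,|u_p(\mu_i t)|^{p-1}\bigr)\tilde\psi+\frac{\nu}{t}\tilde\psi\qquad\text{on }(0,1/\mu_i),\]
and the asymptotic analysis of \cite{IS-arxiv} gives $\mu_i^2\,p\,|u_p(\mu_i t)|^{p-1}\to e^{Z_{0,i}(t)}$ locally uniformly on $(0,\infty)$. After a suitable normalization, I extract from the eigenfunction $\psi_j$ a weak limit $\phi$, bounded and nontrivial, solving on the full half-line the limit spectral problem
\begin{equation}\label{plan:limeq}
-\Delta\phi=\Bigl(e^{Z_{0,i}}+\frac{\nu_i^\infty}{|x|^2}\Bigr)\phi, \qquad \nu_i^\infty:=\lim_{p\to+\infty}\nu_j(p).
\end{equation}

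The core step is then the spectral analysis of \eqref{plan:limeq} in the class of bounded radial functions on $\mathbb R^2\setminus\{0\}$: I would prove that a nontrivial such solution exists if and only if $\nu_i^\infty=-(\theta_i/2)^2$. Indeed, the change of unknown $\phi=r^{\theta_i/2}\tilde\phi$ cancels out the $\nu_i^\infty/r^2$ contribution precisely when $\nu_i^\infty=-(\theta_i/2)^2$, turning \eqref{plan:limeq} into the radial linearization of a regular Liouville-type equation in the (possibly non integer) extended dimension $\theta_i+2$, whose bounded solution is produced explicitly from the scaling symmetry of the singular Liouville equation \eqref{singLiouvilleINTRO}--\eqref{bubblecompletaINTRO}. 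For any other value of $\nu_i^\infty$, an indicial-root analysis at $r=0$ and $r=+\infty$ combined with an ODE shooting/uniqueness argument rules out bounded nontrivial radial solutions.

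To conclude, for each $i=0,\dots,m-1$ I would build a compactly supported trial function, obtained from the explicit eigenfunction of \eqref{plan:limeq} cut off in the variable $t=r/\mu_i(p)$, and compute that its Rayleigh quotient for \eqref{singEigenvalueINTRO} approaches $-(\theta_i/2)^2$. By Courant--Fischer this yields $m$ asymptotically distinct negative approximate eigenvalues with prescribed limits; since by \cite{GGN16} problem \eqref{singEigenvalueINTRO} admits exactly $m$ (simple) negative eigenvalues, and since $(\theta_i)_{i\in\mathbb N}$ is strictly increasing by \eqref{introtheta}, the correspondence is bijective, and matching $\nu_1(p)<\cdots<\nu_m(p)<0$ with the decreasing rearrangement $\theta_{m-1}>\cdots>\theta_0$ yields \eqref{limj}. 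The delicate point I expect is preventing the rescaled eigenfunctions from splitting their mass across multiple bubble scales in the blow-up: this I would handle via a dimension-counting/orthogonality argument exploiting simplicity of each $\nu_j(p)$ together with the scale separation $\mu_{i-1}/\mu_i\to 0$.
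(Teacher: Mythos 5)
Your proposal follows the same high-level strategy as the paper: reduce to $\alpha=0$ via the change of variables, rescale the eigenvalue problem at each bubble scale, identify the limit problems and their unique negative eigenvalues $-(\theta_i/2)^2$, build scaled cut-off trial functions, and invoke orthogonality of the eigenfunctions together with scale separation to close the argument. One genuine difference: you propose to re-derive the spectrum of the limit problem by an indicial-root/shooting analysis, while the paper imports Lemma \ref{lemma:problemaLimiteAutovalori} directly from \cite{DIP-N>3, AG-N2}; either route is acceptable for that sub-step.

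There is, however, a gap in your ``matching'' step. You assert that having $m$ approximately orthogonal trial functions with Rayleigh quotients tending to the $m$ distinct values $\beta^0,\dots,\beta^{m-1}$, combined with the fact that there are exactly $m$ simple negative eigenvalues, makes the correspondence bijective ``by Courant--Fischer.'' That is not sufficient. The trial functions, inserted into the min-max characterization, deliver only the one-sided estimate $\nu_j(p)\le \beta^{m-j}+o(1)$; nothing in this counting prevents, say, $\nu_1(p),\dots,\nu_{m-1}(p)$ all converging to the same most negative value $\beta^{m-1}$. The matching lower bound $\nu_j(p)\ge \beta^{m-j}-o(1)$ is the technical heart of the statement, and it is established in the paper by induction on $j$ (Propositions \ref{prop:BaseInduttiva}, \ref{prop:autovAutofIteration}, \ref{prop:autovAutofUltimo}): one first uses the potential decay between scales (Lemma \ref{lemma-stima-fp}) and a Rayleigh-quotient estimate to show that the eigenfunction cannot lose all its mass — so some rescaled limit $\overline\psi^{m-\kappa}_{j}$ is nontrivial — and then combines the quantitative no-concentration bound of Lemma \ref{Lemma:NormaL2pesataPiccolaSuInsiemini} with the orthogonality $\int_0^1 r^{-1}\psi_{j,p}\psi_{\kappa,p}\,dr=0$ (and the inductive hypothesis locating $\psi_{\kappa,p}$ for $\kappa<j$) to rule out $\kappa<j$, forcing $\bar\nu_j=\beta^{m-j}$. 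Your final sentence about ``dimension-counting/orthogonality'' gestures at this, but as written it neither addresses mass escape nor recognizes that orthogonality is what supplies the missing \emph{lower} bound on the eigenvalues, not merely a constraint on the eigenfunctions.
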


 Theorem \ref{INTROthm:autovaloriAutofunzioni} is part of a more general result which describes also the asymptotic behavior of the eigenfunctions (see Theorem \ref{thm:autovaloriAutofunzioni} for the complete statement). Its proof  is quite technical and, as already mentioned, it strongly relies on the \emph{tower of bubbles} asymptotic behavior of the radial solution $u_{p}$ as $p\rightarrow + \infty$ described very recently in \cite{IS-arxiv}, for any fixed number $m\geq 1$ of nodal zones (see Section \ref{section:asintoticaAutovalori}, see also  \cite{GGP14, AG-N2} for the case  $m=2$).\\
 The main difficulty, which is peculiar of the two dimensional case, is to understand the interaction between the different bubbles composing the profile of $u_p$ and the eigenfunctions of \eqref{singEigenvalueINTRO}.

  We shall see that each eigenfunction $\psi_{j}$ is \emph{synchronized} with a different bubble: precisely the first eigenfunction $\psi_{1}$ matches with the more external nodal zone of $u_p$ where the last bubble $Z_{0,m-1}$ appears, the second eigenfunction $\psi_{2}$ matches with the penultimate  bubble $Z_{0,m-2}$ and so on, till the last eigenfunction $\psi_{m}$ that matches with the first bubble $Z_{0,0}$ (see Section \ref{section:proofEigenvalues}). 
\\
Indeed, in the case $\alpha=0$, one can decompose formula \eqref{formulaHenonIntro1} as follows 
 	\begin{equation}\label{morsedecomp}
  	\textsf{m}(u_{p}) = \sum \limits_{i=1}^{m-1} \left(1+ 2 \left[\frac{\theta_{m-i}}{2}  \right]  \right) \ +\ 1, \end{equation}
 where
	each term 	
	``$ 1+ 2 \left[\frac{\theta_{m-i}}{2}  \right]  $'', coming from the $i^{th}$ eigenvalue of \eqref{singEigenvalueINTRO}, describes the contribution to the Morse index due to the  bubble $Z_{0, m-i}$, and the last term ``$1$'', coming from the $m^{th}$ eigenvalue, is due to the first bubble $Z_{0,0}$.
Observe that the  Morse index of each bubble (as a solution to \eqref{singLiouvilleINTRO} for $\alpha=0$) is known (see \cite{ChenLin})
 and coincides with the previous values:
\[\textsf{m}(Z_{0,0})=1 \qquad \mbox{ and }\qquad \textsf{m}(Z_{0, m-i})=1+ 2 \left[\frac{\theta_{m-i}}{2}  \right],  \]
so that \eqref{morsedecomp} may be rewritten as		
\[	\textsf{m}(u_{p})= \sum \limits_{i=1}^{m}	\textsf{m}(Z_{0, m-i}).\]
Moreover, one can explicitly compute (cfr. \cite{IS-arxiv}) the different contribution coming from each bubble
\[\textsf{m}(Z_{0, m-i})=1+ 2 \left[\frac{\theta_{m-i}}{2}  \right]=8(m-i)+3,\]
from which  formula \eqref{formulaLaneEmdenIntro} follows, which is \emph{nonlinear (quadratic)} in the number $m$ of nodal zones. We stress that in dimension $N\geq 3$ and for $\alpha=0$ formula \eqref{LaneEmdenFormula} holds, which is instead \emph{linear} in $m$. We notice that, since in this case the profile of the bubbles is given always by the same function $U_{0}$ (in \eqref{bubbleN} with $\alpha=0$) and it is known that  $\textsf{m}(U_{0})=1$, formula \eqref{LaneEmdenFormula} may be read as
\[\textsf{m}(u_{p})=m+(m-1)N= \textsf{m}(U_{0})+(N+1)\sum_{i=1}^{m-1}\textsf{m}(U_{0}).\]
\\

The paper is organized as follows: 
\tableofcontents

 \

\section{Asymptotic results for the Lane-Emden problem} \label{Section:Notazioni&Idee}
%\edz{\F visto che in questa sezione è contenuta la strategia per il calcolo del Morse index, il titolo non mi sembra appropriato}
This section collects known results about the asymptotic behavior of the radial solutions in the case $\alpha=0$. Hence we consider the Dirichlet Lane-Emden problem
\begin{equation}\label{LE}
\left\{\begin{array}{ll}
-\Delta u = |u|^{p-1} u \qquad & \text{ in } B, \\
u= 0 & \text{ on } \partial B\end{array} \right.
\end{equation}
where $p>1$ and $B$ stands for the unit disk.

\

For any  $p>1$ and any $m\in\mathbb N$, $m\geq 1$, there exists a unique (up to a sign) radial solution   to \eqref{LE}  with exactly $m-1$ interior zeros  (see for instance \cite[p. 263]{Kajikiya}). 
\\
The solutions do not vanish in the origin and we denote by $u_p$ the unique nodal radial solution of \eqref{LE} having $m-1$ interior zeros and satisfying 
\[u_p(0)>0.\]

\

With a slight abuse of notation, we often  write  $u_p(r)=u_p(|x|)$.

\

\subsection{Asymptotic analysis of radial solutions}

Let us denote by $r_{i,p}$ the nodal radii of $u_p$ and by $s_{i,p}$ the critical radii of $u_{p}$ respectively, then it is known that
\[ 0=s_{0,p}<r_{1,p}<s_{1,p}<r_{2,p}<\ldots <r_{m-1,p}<s_{m-1,p}<r_{m,p}=1.
\]

%\begin{tabular}{l} \\[-.2cm]
%	$0<r_{1,p}<r_{2,p}\dots <r_{m,p}=1$  the zeros of $u_p$,\\[.2cm]
%	$A_{i,p}$ the nodal zones of $u_p$, precisely \\[.2cm]
%	$A_{0,p} = \{ x \, : \, |x|< r_{1,p}\}$ , and 
%	$A_{i,p} = \{ x \, : \, r_{i,p}<|x|< r_{i+1,p}\}$  as $i=1,\dots m-1$, \\[.2cm]
%	$M_{i,p}=\max\limits_{A_{i,p}} |u_{p}|$ the extremal value of $|u_{p}|$ in the   $(i+1)^{th}$ nodal zone $A_{i,p}$\\[.2cm]
%	$s_{i,p}\in A_{i,p}$  the extremal point of $|u_{p}|$ in the  $(i+1)^{th}$ nodal zone, \\[.2cm]
%	so that $M_{i,p} = |u_{i,p}(s_{i,p})|$. \\[.2cm]
%\end{tabular}

Let us define the scaling parameters
\begin{equation}\label{epsilon}
\e_{i,p}  =\left(p |u_{p}(s_{i,p})|^{p-1}\right)^{-\frac{1}{2}}, \qquad i=0,\ldots, m-1,
\end{equation}
and rescale  the solutions in each nodal zone as
\begin{equation}\label{resc-sol}
 u^i_{p}(r) : = p\frac{u_p(\e_{i,p} x) - u_p(s_{i,p})}{u_p(s_{i,p})} \quad \text{ as }  
r\in\left\{\begin{array}{lr}
[0,\frac{r_{1,p}}{\varepsilon_{0,p}}], \qquad \quad \qquad \ \mbox{ if }i=0,
\\
\left[\frac{r_{i,p}}{\varepsilon_{i,p}},\frac{r_{i+1,p}}{\varepsilon_{i,p}}
\right],\qquad \mbox{ if }i= 1,\ldots, m-1.
\end{array}
\right.
\end{equation}

\

Let $(\theta_i)_i$ be the sequence  defined in \eqref{introtheta}, which satisfies (see \cite{IS-arxiv}):
\begin{equation}\label{useful2}
\theta_0=2,\quad
	  8i+2 < \theta_i <8i +4,\ \forall i\geq 1.
	\end{equation} 
We also introduce	
\begin{equation}\label{bubblecomplete}
Z_{i}(x):=\log\frac{2\theta_i^2\gamma_i|x|^{(\theta_i-2)}}{(\gamma_i+|x|^{\theta_i})^2},
\quad\mbox{ where }\quad\gamma_i:=	\frac{\theta_{i}+2}{\theta_{i}-2} \left(\frac{\theta_{i}^2-4}{2}\right)^{\frac{\theta_{i}}{2}}.\end{equation}

Observe that the function $Z_{i}$ is a radial solution of
\begin{equation}
\label{prob-lim-i}
\left\{
\begin{array}{lr}
-\Delta Z_{i}=e^{Z_{i}}+ 2\pi(2-\theta_i)\delta_{0}\quad\mbox{ in }\R^2,\\
Z_{i}(\sqrt{\frac{\theta_i^2-4}{2}})=0,\\
\int_{\R^2}e^{Z_{i}}dx=\frac{8\pi\theta_i}{2},
\end{array}
\right.
\end{equation}
where  $\delta_{0}$ is the Dirac measure centered at $0$. 
In particular in the case  $i=0$, since the constant $\theta_{0}=2$, $Z_{0}$ solves the standard Liouville equation
\begin{equation}\label{prob-lim-0}
\begin{cases}
-\Delta Z_{0} = e^{Z_{0}} &  x \in \R^2 , \\
Z_{0}(0)=0&\\
\int_{\R^2} e^{Z_{0}} dx =8\pi.&\end{cases}
\end{equation}

From \cite[Theorem 2.5]{IS-arxiv} we know that $u_p$ has a \emph{tower of bubbles} behavior in the limit as $p\rightarrow +\infty$, with bubbles  given  by the functions $Z_i$, $i=0,\ldots, m-1$:
\begin{lemma}[\cite{IS-arxiv}]\label{teo:asympt} 
 As $p\to \infty$ we have
	\begin{equation}
	\label{zeri-lim}	
	\frac{r_{i,p}}{\e_{i,p}} \rightarrow 0  \ (i\neq 0), \qquad \frac{r_{i+1,p}}{\e_{i,p}} \to \infty , \qquad \frac{s_{i,p}}{\e_{i,p}}  \rightarrow \sqrt{\frac{\theta_i^2-4}{2}}, 
	\end{equation}
	for $i=0,\dots m-1$. Furthermore
	\begin{eqnarray} 
	\label{sol-lim-0} && u^0_{ p} \longrightarrow  Z_{0}\ \text{ in } C^1_{\loc}(\R^2), \\
	\label{sol-lim-i} &&  \tuip  \longrightarrow Z_{i}\    \text{ in } C^1_{\loc}(\R^2\setminus\{0\}),  \mbox{ for $i=1,\dots m-1.$} 
	\end{eqnarray} 
\end{lemma}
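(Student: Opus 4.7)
The plan is to argue by induction on the nodal zone index $i$, deriving the bubble profiles from the innermost nodal zone outward. Substituting \eqref{resc-sol} into the radial ODE $-(ru_p')'=r|u_p|^{p-1}u_p$, the rescaled function $u_p^i$ satisfies
\[
-\Delta u_p^i(x) = \mathrm{sgn}(u_p(s_{i,p}))\left|1+\tfrac{u_p^i(x)}{p}\right|^{p-1}\left(1+\tfrac{u_p^i(x)}{p}\right),
\]
with $u_p^i(s_{i,p}/\e_{i,p})=0$, on an annulus with inner radius $r_{i,p}/\e_{i,p}$ (or a disk if $i=0$) and outer radius $r_{i+1,p}/\e_{i,p}$. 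In any region where $u_p^i$ is locally bounded, the right-hand side formally tends to $\pm e^{u_p^i}$.

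For the base case $i=0$, since $s_{0,p}=0$ and $u_p(0)$ is a (positive) local maximum, $u_p^0(0)=0$ is the maximum of the sequence and $u_p^0\le 0$ on its domain. A first step is to prove $r_{1,p}/\e_{0,p}\to\infty$, which follows from an upper bound on $u_p^0$ and the constraint $u_p^0\ge -p$. Standard Liouville-type arguments together with the Chen--Li classification of radial solutions of $-\Delta Z=e^Z$ on $\R^2$ with $Z(0)=0$ then identify the limit with $Z_0$ as in \eqref{prob-lim-0}, of total mass $8\pi$, which matches $\theta_0=2$.

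For the inductive step $i\geq 1$, assume the result up to $i-1$. The total mass accumulated by the first $i$ bubbles is $4\pi(\theta_0+\cdots+\theta_{i-1})$, and this determines the asymptotic behaviour of $u_p$ just outside the $(i-1)$-th bubble via integration of the radial equation, hence the ratio $u_p(s_{i,p})/u_p(s_{i-1,p})$ and the new scale $\e_{i,p}$. One derives the separation $\e_{i-1,p}/\e_{i,p}\to 0$ and $r_{i,p}/\e_{i,p}\to 0$, so that $u_p^i$ lives on an annulus whose inner radius shrinks to $0$ and outer radius diverges. Local uniform bounds on $u_p^i$ (via ODE comparison starting from the known behaviour at the matching point with the $(i-1)$-th bubble) and elliptic regularity then give $C^1_{\loc}(\R^2\setminus\{0\})$ convergence to a radial solution of $-\Delta Z_i = e^{Z_i}+2\pi(2-\theta_i)\delta_0$, the Dirac charge encoding the net flux from the inner bubbles.

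The main obstacle is the precise identification of $\theta_i$. This amounts to matching the logarithmic tail of $Z_{i-1}$ at infinity, which behaves like $-2\theta_{i-1}\log|x|$ plus explicit constants, with the inner expansion of $Z_i$ near $0$, of the form $(\theta_i-2)\log|x|+C'$. Tracking the constants through the intermediate ``neck'' region between the scales $\e_{i-1,p}$ and $\e_{i,p}$, together with the normalization $u_p^i(s_{i,p}/\e_{i,p})=0$, yields after elimination a transcendental equation in $\theta_i$ whose inversion produces the Lambert $\mathcal L$ formula \eqref{introtheta}. Once $\theta_i$ is pinned down, the limit profile is uniquely determined to coincide with $Z_i$ in \eqref{bubblecomplete}, and the remaining convergence statements are standard consequences of the a priori estimates.
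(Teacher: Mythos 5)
The paper does not prove this lemma: it is cited verbatim from \cite[Theorem 2.5]{IS-arxiv}, and the present article simply uses it as an external input. There is therefore no ``paper's own proof'' to compare against. What you have written is a sketch of the strategy one would find in the cited reference (and in the precursors \cite{GGP14,AG-N2} for $m=2$): rescale in each nodal zone, pass to the limit to a Liouville or singular-Liouville profile, and determine $\theta_i$ by matching the far-field of $Z_{i-1}$ with the near-field of $Z_i$. That outline is faithful to how the result is actually established.

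Two remarks on the sketch itself. First, a small computational slip: substituting \eqref{resc-sol} into the radial equation gives
\[
-\Delta u^i_p \;=\; \left|1+\tfrac{u^i_p}{p}\right|^{p-1}\!\left(1+\tfrac{u^i_p}{p}\right),
\]
with no $\mathrm{sgn}(u_p(s_{i,p}))$ prefactor — the sign cancels because both the numerator and the normalizing $|u_p(s_{i,p})|^{p-1}u_p(s_{i,p})$ carry the same sign; inside each nodal annulus $1+u^i_p/p\ge 0$, so the limit is $e^{Z_i}\ge 0$ as required. Second, the genuinely hard content — showing the scales separate ($\e_{i-1,p}/\e_{i,p}\to 0$, $r_{i,p}/\e_{i,p}\to 0$), obtaining the uniform bounds that justify passing to the limit, and above all extracting the transcendental equation whose solution is the Lambert-function recursion \eqref{introtheta} — is acknowledged in your proposal but not actually carried out; these are precisely the steps where the real work lies in \cite{IS-arxiv}. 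As a high-level roadmap your proposal is sound, but it should be understood as a description of the cited proof rather than a self-contained argument, which is consistent with how the paper itself treats this statement.
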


\

Last we recall some pointwise estimates that will be useful in the study of the linearized operator at $u_{p}$. Let $f_{p}$ be the following function
\begin{equation}\label{def:f}
f_p(r):=p\, r^2|u_p(r)|^{p-1}, \quad   0\leq r\leq1
\end{equation}
and for any $K>1$ and $p>1$ let us  define the set $G_{p}(K)\subset [0,1]$ as
\begin{equation}\label{def:G} G_{p}(K):=\bigcup_{i=0}^{m-2}[K\varepsilon_{i,p},\frac{1}{K}\varepsilon_{i+1,p}]\cup [K\varepsilon_{m-1,p},1].\end{equation}
In  \cite[Proposition 6.10]{DIP-N2} it has been proven that
\begin{lemma}\label{lemma-stima-fp} 
	There exists $C>0$ such that
	\begin{equation}\label{stima-uno}
	f_p(r) \leq C \ \text{ for any } r\ge 0 \text{ and } p>1.
	\end{equation}
	Moreover for any $\delta>0$ there exist $K(\delta) >1$ and $p(\delta)>1$ such that for any $K>K(\delta)$ and $p\geq p(\delta)$
	\begin{equation}\label{stima-due} 
	\max \left\{ f_p(r) \, : \,  r\in  G_{p}(K) \right\} \leq \d .
	\end{equation}
\end{lemma}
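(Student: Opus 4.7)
The natural approach is to pass to the rescaled variables in each nodal zone. Writing $y = r/\varepsilon_{i,p}$, the definition \eqref{epsilon} of $\varepsilon_{i,p}$ together with \eqref{resc-sol} yields the exact identity
\[
f_p(\varepsilon_{i,p}\,y) \;=\; y^2\,\Bigl|1+\tfrac{u_p^i(y)}{p}\Bigr|^{p-1}.
\]
By Lemma \ref{teo:asympt} and \eqref{bubblecomplete}--\eqref{prob-lim-0}, on compact subsets of $\R^2\setminus\{0\}$ for $i\geq 1$ and of $\R^2$ for $i=0$ this converges uniformly to
\[
y^2 e^{Z_i(y)} \;=\; \frac{2\theta_i^2\gamma_i\,y^{\theta_i}}{(\gamma_i+y^{\theta_i})^2}\qquad(i\geq 1)
\]
(and to the analogous Liouville expression $y^2/(1+y^2/8)^2$ for $i=0$). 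Each such profile is bounded by $\theta_i^2/2$, attains this maximum at $y^{\theta_i}=\gamma_i$, and decays like $y^{\theta_i}$ at $0^+$ and like $y^{-\theta_i}$ at infinity. This explicit shape drives both estimates.

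For \eqref{stima-uno}, I would cover $[0,1]$ by the ``bubble cores'' $[K^{-1}\varepsilon_{i,p},K\varepsilon_{i,p}]$, $i=0,\ldots,m-1$ (with $K^{-1}\varepsilon_{0,p}$ replaced by $0$), together with $G_p(K)$. The convergence above gives $f_p\leq \theta_i^2/2+o_p(1)$ on each bubble core for $K$ fixed and $p$ large, while on $G_p(K)$ the smallness estimate \eqref{stima-due} takes over. Thus the two parts of the lemma are naturally proved in tandem, with $C=1+\max_i\theta_i^2/2$ coming out of the bubble-core analysis.

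For \eqref{stima-due}, the starting point is the endpoint estimates on each transition interval $[K\varepsilon_{i,p},K^{-1}\varepsilon_{i+1,p}]$:
\[
f_p(K\varepsilon_{i,p}) \to K^2 e^{Z_i(K)} = O(K^{-\theta_i}),\qquad f_p(K^{-1}\varepsilon_{i+1,p}) \to K^{-2}e^{Z_{i+1}(K^{-1})} = O(K^{-\theta_{i+1}}),
\]
both vanishing as $K\to\infty$ since $\theta_i,\theta_{i+1}\geq 2$ by \eqref{useful2}. I would then propagate these endpoint bounds to the interior using the monotonicity of $|u_p|$ between consecutive critical points: for $K>\sqrt{(\theta_i^2-4)/2}$ the interval $[K\varepsilon_{i,p},r_{i+1,p}]$ lies to the right of $s_{i,p}$ (by Lemma \ref{teo:asympt}), where $|u_p|$ is monotone decreasing to $0$ at $r_{i+1,p}$, and similarly $|u_p|$ is monotone increasing on $[r_{i+1,p},K^{-1}\varepsilon_{i+1,p}]$. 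A direct critical-point analysis of $f_p$ via the identity
\[
\frac{f_p'(r)}{f_p(r)} \;=\; \frac{2}{r} + (p-1)\frac{u_p'(r)}{u_p(r)}
\]
then pins down the possible interior extrema of $f_p$ and, combined with the endpoint bounds, controls the supremum on the whole interval. The outer piece $[K\varepsilon_{m-1,p},1]$ is treated analogously, using the local uniform convergence $u_p\to 0$ on $B\setminus\{0\}$ to handle the endpoint at $r=1$.

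The main obstacle is exactly this interior control on the transition intervals: the naive monotonicity bound $f_p(r)\leq (r/K\varepsilon_{i,p})^2 f_p(K\varepsilon_{i,p})$ is too weak, since $r/\varepsilon_{i,p}$ may grow as large as $r_{i+1,p}/\varepsilon_{i,p}\to\infty$ by Lemma \ref{teo:asympt}. Overcoming this requires the refined critical-point/ODE analysis above, or equivalently a Pohozaev-type identity that turns an interior maximum of $f_p$ into one of the endpoint values that are already small for $K$ large.
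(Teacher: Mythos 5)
The paper does not prove this lemma itself: it quotes it verbatim from \cite[Proposition 6.10]{DIP-N2} and uses the statement as a black box. So what I can assess is whether your sketch is complete on its own terms. Your rescaling identity $f_p(\varepsilon_{i,p}y)=y^2\,|1+u_p^i(y)/p|^{p-1}$, the limiting profiles $y^2e^{Z_i(y)}$ with maximum $\theta_i^2/2$, and the covering of $[0,1]$ by bubble cores and the transition set $G_p(K)$ are all correct, and you have correctly located the crux: the endpoint bounds $f_p(K\varepsilon_{i,p})=O(K^{-\theta_i})$ and $f_p(K^{-1}\varepsilon_{i+1,p})=O(K^{-\theta_{i+1}})$ do not propagate through the transition interval by the naive $r^2$ comparison, since $r_{i+1,p}/\varepsilon_{i,p}\to\infty$.

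But your sketch stops exactly at that crux. The logarithmic-derivative identity $f_p'/f_p=2/r+(p-1)u_p'/u_p$ tells you that an interior critical point $\rho$ of $f_p$ satisfies $2u_p(\rho)=-(p-1)\rho\,u_p'(\rho)$; by itself this supplies no upper bound on $f_p(\rho)=p\rho^2|u_p(\rho)|^{p-1}$. To turn it into one you would need quantitative control of $u_p(\rho)$ and of $\rho\,u_p'(\rho)$ for $\rho$ comparable with $r_{i+1,p}$, which is precisely the regime where the $C^1_{\loc}$ convergence in Lemma \ref{teo:asympt} is silent, because $\rho/\varepsilon_{i,p}\to\infty$. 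The Pohozaev route is indeed a natural substitute — the planar Pohozaev function $P(r)=\tfrac{r^2}{2}(u_p')^2+\tfrac{r^2}{p+1}|u_p|^{p+1}$ satisfies $P'=\tfrac{2r}{p+1}|u_p|^{p+1}\ge0$, and an interior maximum of $f_p$ can be compared through $P(\rho)\le P(r_{i+1,p})=\tfrac{r_{i+1,p}^2}{2}u_p'(r_{i+1,p})^2$ — but closing that estimate still requires a sharp asymptotic for $p\,r_{i+1,p}\,u_p'(r_{i+1,p})/u_p(s_{i,p})$, which is an extra input beyond Lemma \ref{teo:asympt}; and the same unresolved step recurs for the outer interval $[K\varepsilon_{m-1,p},1]$. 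You state that the ``critical-point/ODE analysis'' or a ``Pohozaev-type identity'' would ``pin down the possible interior extrema'' and explicitly flag this as ``the main obstacle'', but you do not overcome it. The structure and the diagnosis are right; the argument is incomplete precisely at the one step on which the whole lemma hinges.
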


\

\

\section{Strategy for the Morse index computation}\label{subsec:Morse}

We will first consider the Lane-Emden problem ($\alpha=0$) and prove Theorem \ref{theorem_Main_Intro} in this case (see Section \ref{proofMorseLaneEmden}), finally in Section \ref{sec:Henon} we will treat the H\'enon problem ($\alpha>0$)  by exploiting  the change of variable \eqref{changeIntro} and prove Theorem \ref{theorem_Main_Intro} in its full generality.

 \
 
 This section describes the strategy that we will adopt in order to compute the Morse index in  the case $\alpha=0$. More precisely we will show how the computation of the Morse index may be reduced to the study of the \emph{size} of the negative radial eigenvalues of a suitable singular eigenvalue problem (see formula \eqref{morse-index-formula} below). The study of these eigenvalues and the conclusion of the  proof of Theorem \ref{theorem_Main_Intro} (in the case $\alpha=0$) is instead the goal of Sections \ref{section:asintoticaAutovalori} and  \ref{proofMorseLaneEmden}, respectively.
 
 \

As before we denote by $u_p$ the radial solution to the Lane-Emden problem \eqref{LE} having $m-1$ interior zeros and keep all the notations introduced in Section  \ref{Section:Notazioni&Idee}.
 
 \
 
As already recalled the \emph{Morse index} of $u_p$\remove{, that we denote hereafter by $\textsf{m}(u_p)$,} is the maximal dimension of a subspace  of $ H^1_0(B)$ in which the quadratic form 
\begin{equation}
\label{forma-quadratica}
{\mathcal Q}_p(\phi) =\int_B \left(|\nabla \phi|^2 -V_p(x) \phi^2\right) dx
\end{equation}
is negative defined, where \begin{equation}
\label{V_p}
V_p(x)  := p| u_p(x)|^{p-1} .
\end{equation}
Since $u_{p}$ is a radial solution we can also consider the \emph{radial Morse index} of $u_{p}$, denoted by $\textsf{m}_{\mathrm{rad}}(u_p)$, which is the maximal dimension of a subspace $X$ of $ H^1_{0,\mathrm{rad}}(B)$ (the subspace of radial functions in $ H^1_0(B)$) such that ${\mathcal Q}_p(\phi)<0$, $\forall \phi\in X\setminus\{0\}$.

\

Observe that $B$ is a bounded domain, so $\textsf{m}(u_p)$ (resp. $\textsf{m}_{\mathrm{rad}}(u_p)$) coincides with the number of the negative eigenvalues (resp. radial eigenvalues) $\Lambda(p)$,  counted with multiplicity, of  the linearized operator $L_{p}:-\Delta-V_{p}(x)$ at $u_{p}$, i.e.:
\begin{equation}\label{standard-eig-prob}
-\Delta \phi-V_p(x) \phi= \Lambda(p) \phi , \qquad \phi \in H^1_0(B) \mbox{ (resp. $\phi \in H^1_{0,\mathrm{rad}}(B) $)}.
\end{equation}

\

It is well known (see \cite{HRS11, BW03}) that 
\begin{equation}\label{radialem}
\textsf{m}_{\mathrm{rad}}(u_p)=m,
\end{equation}
where $m$ is the number of nodal zones of $u_{p}$, moreover $u_{p}$ is radially non-degenerate (see for instance \cite{GI20}).

\

In order to computer $\textsf{m}(u_p)$ we follow the same general strategy already used in \cite{DIP-N>3, DIP-N2, AG_N3, AG-N2, GI20}: instead of counting the negative eigenvalues of \eqref{standard-eig-prob}, we consider an auxiliary singular eigenvalue problem which allow to exploit a spectral decomposition and hence to reduce to a  radial eigenvalue problem.

\subsection{Singular eigenvalue problem and spectral decomposition}

\

\

It is possible to show that $\textsf{m}(u_p)$ coincides with  the number of negative eigenvalues  $\widehat\Lambda(p)$, counted with multiplicity,  of the following auxiliary eigenvalue problem associated to the linearized operator $L_p $:
\begin{equation}\label{singular-eig-prob}
-\Delta \phi -V_p(x) \phi = \widehat\Lambda(p) \frac{\phi}{|x|^2}  , \qquad \phi \in {\mathcal H}_0 ,
\end{equation} 
in the  weighted Sobolev space
\[
\mathcal{H}_0 =\mathcal L\ \cap\ H_0^1(B), \ \mbox{ where } {\mathcal L}  = \{\phi:  B\to \R\, : \,  \phi/|x|  \in L^2(B)\}.
\] 

This equivalence is quite straightforward in the case of domains which do not contain the origin (see for instance  \cite{GGPS11},  where it is proved in the case when the domain is an annulus).  In our case, since $0\in B$,   \eqref{singular-eig-prob}  is a singular problem. Nevertheless its \emph{negative} eigenvalues may be variationally characterized  despite a \emph{lack of compactness} (see \cite{GGN16}, for more details see also \cite[Section 3.2]{GI20}, and  \cite{AG-sing1} for a more general setting) and the equivalence between the number of the negative eigenvalues of \eqref{standard-eig-prob} and \eqref{singular-eig-prob} can be proved (see \cite[Lemma 2.6]{GGN16}, see also \cite[Lemma 3.5]{GI20}, \cite[Proposition 1.1]{AG-sing1}).
%
%
%\textcolor{cyan}{We refer to \cite[Subsection 5.1]{AG-N2} for some useful density results in the Sobolev spaces associated to the %Lebesgue space $\mathcal L$.\edz{\textcolor{cyan}{questo lo toglierei}}}
%
%

\

The main advantage of dealing with the singular problem \eqref{singular-eig-prob} instead of \eqref{standard-eig-prob} is that the eigenfunctions of \eqref{singular-eig-prob} can be easily projected along the spherical harmonics. This implies a 
spectral decomposition for the eigenvalues $\widehat \L (p)$ of \eqref{singular-eig-prob} into a radial and an angular part:
\begin{equation}\label{decomposition}
\widehat \L (p)=k^2 + \nu(p),\end{equation}
 where $k^{2}$, for $k=0,1,2, \ldots$ are the eigenvalues of the Laplace-Beltrami operator $-\Delta_{\mathbb S^{1}}$ (the angular part) and
$\nu(p)$ are the (negative) radial eigenvalues of \eqref{singular-eig-prob}, namely they satisfy the following singular Sturm-Liouville problem 
\begin{equation}
\label{S-L}
-(r \, \psi' )'= r \left( V_p(r) + \frac{\nu(p)}{r^2} \right) \psi, \qquad
\psi\in {\mathcal H}_{0,\mathrm{rad}}=\mathcal L\ \cap\ H_{0,\mathrm{rad}}^1(B).
\end{equation}
We stress that  $\widehat \L (p)$ is negative iff
\begin{equation}\label{babel}\sqrt{-\nu(p)}>k.\end{equation}
%$\nu(p) < -k^2$,
Hence in order to compute $\textsf{m}(u_p)$ one \emph{reduces to study \eqref{babel} for the negative eigenvalues $\nu(p)$} of the $1$-dimensional problem \eqref{S-L}.\\ For more details about the spectral decomposition the reader may look at \cite{Lin95, GGPS11, GGN16}, or to the more recent \cite[Lemma 3.7]{GI20},  \cite[Section 4]{AG-sing1}.

\

\subsection{Variational characterization of the negative eigenvalues and eigenfunctions of \eqref{S-L}}\label{subsec:var-car}

\

\

As already said,  the \emph{negative} eigenvalues for problem \eqref{S-L} may be defined variationally despite the singularity of the Sturm-Liouville problem \eqref{S-L} at the origin, moreover they are simple  and  by \eqref{radialem} we know that they are exactly $m$, which we denote by $\nu_{j}(p)$, $j=1,\ldots, m$.\\
Here we recall their variational characterization and the definition of the corresponding eigenfunctions (cfr. \cite{GGN16}, see also \cite[Section 3]{AG-sing1}):
\begin{equation}\label{var-char-1}
	\nu_1(p) := \min\left\{ \frac{\int_0^1 r\left(|\psi'|^2 - V_p \psi^2\right)dr}{\int_0^1 r^{-1} \psi^2 dr }: \psi\in \mathcal H_{0,\mathrm{rad}} , \ \psi\neq 0 \right\};
	\end{equation}
	since it is negative,  it can be proven that it is attained by a function $\psi_{1,p}\in \mathcal H_{0,\mathrm{rad}}$ which solves \eqref{S-L} in a weak sense, and  which is  therefore called an \emph{eigenfunction} related to the eigenvalue $\nu_1(p)$; w.l.g. we may assume that it is normalized in $\mathcal L$, i.e.  
	$\int_{0}^{1}r^{-1}(\psi_{1,p})^{2}=1$. Iteratively, for $j=2,\ldots, m$, one has	\begin{equation}\label{var-char} 
	\nu_{j}(p) :=  \min\left\{ \frac{\int_0^1r \left(|\psi'|^2 - V_p \psi^2\right) dr}{\int_0^1r^{-1} \psi^2 dr} \, : \, \psi\in \mathcal H_{0,\mathrm{rad}}, \ \psi \underline{\perp} \psi_{1,p},\dots \psi_{j-1,p} \right\},
	\end{equation}
	where the symbol $\underline{\perp}$ denotes orthogonality in $\mathcal L$, i.e.
	\[ \varphi \underline\perp \psi \Longleftrightarrow \int_0^1 r^{-1} \varphi\psi dr = 0,\]
	Again, since $\nu_{j}(p) <0$ for any $j=2,\ldots, m$, then the infimum is attained by an eigenfunction $\psi_{j,p}$, which solves \eqref{S-L}  in a weak sense and that w.l.g. satisfies
\begin{equation}\label{normalization}
\int_0^1 r^{-1} \psi_{j,p} \psi_{h,p}  dr = \delta_{j h}.
\end{equation}

\

 Furthermore one can prove that the eigenvalues are simple and that (see \cite[Proposition 3.3, Theorem 1.3]{AG-sing2})
\begin{equation}\label{est-rse}
\nu_{1}(p) < \nu_2(p)<\dots \nu_{m-1}(p)< -1 < \nu_m(p) <0,  
	\end{equation}
for any $p>1$.

\

\subsection{Computation of $\textsf{m}(u_p)$ by the size of the negative eigenvalues of \eqref{S-L}}\label{subsec:comp-morse}

\

\

By \eqref{decomposition} and \eqref{babel}, and recalling  that the eigenvalues $\nu_{j}(p)$ defined in \eqref{var-char-1}-\eqref{var-char} are simple while the eigenvalues $k^2$ of the Laplace-Beltrami operator $-\Delta_{\mathbb S^{1}}$ have multiplicity $1$ if $k=0$ and $2$ when $k\geq 1$,  it follows that
\begin{equation}\label{morse-index-formula}
\textsf{m}(u_p)=  m + 2 \sum\limits_{j=1}^{ m-1} \left\lceil \sqrt{-\nu_j(p)} -1 \right\rceil  ,
\end{equation}
for any $p>1$. 
\

\

\section{Asymptotic behavior of $\nu_{j}(p)$ as $p\rightarrow+\infty$}
\label{section:asintoticaAutovalori}

In this section we study the asymptotic behavior, as $p\rightarrow +\infty$, of the singular eigenvalues $\nu_{j}(p)$, $j=1,\ldots, m$, defined in \eqref{var-char-1}-\eqref{var-char}.

\

In order to compute their limit values we will properly scale the corresponding eigenfunctions $\psi_{j,p}$ according  to each scaling parameter $\e_{i,p}$ introduced in \eqref{epsilon} and then pass to the limit into the equations satisfied by the rescaled functions. This will be possible thanks to the asymptotic results on the solutions $u_{p}$ of the Lane-Emden problem \eqref{LE} collected in Section \ref{Section:Notazioni&Idee}. Furthermore we will analyze the limit eigenvalue problems obtained (see Lemma \ref{lemma:problemaLimiteAutovalori} below).

\

Our results about the asymptotic behavior of the eigenvalues and the rescaled eigenfunctions are stated in Theorem \ref{thm:autovaloriAutofunzioni} below (which is the complete version of Theorem \ref{INTROthm:autovaloriAutofunzioni}  in Section \ref{section:intro}).

\

Next we introduce some notation and observations needed to state Theorem \ref{thm:autovaloriAutofunzioni}. 

\

We denote by $\psi^i_{j,p}$, for $i=0, \dots m-1$, the  $m$ functions obtained from rescaling each eigenfunction $\psi_{j,p}$ as follows:
\begin{equation}\label{autof-resc}
\psi^i_{j,p}(r) := \begin{cases} \psi_{j,p}(\e_{i,p} r) & \quad \text{ in }   \left[0, \frac{1}{\e_{i,p}}\right) 
\\
0 &  \quad \text{ otherwise.} \end{cases} 
\end{equation}
Observe that $\psi^i_{j,p}$  belong to the closure of $C_0^{\infty}(0,\infty)$ with respect to the norm 
%\edz{\textcolor{red}{NB: le riscalate sono discontinue\\$\;$\\non ho capito  come faccio a dire che sono nella chiusura di  closure $C_0^{\infty}(0,\infty)$ ....? } \\ {\AL avevo scritto una sciocchezza, quello di prima era il dominio per il riscalamento della soluzione.  Si pu\`o togliere la nota se vi convince}}
\[ \left(\int_0^{\infty} \left( r |\psi'|^2 + r^{-1}\psi^2 \right) dr \right)^{\frac{1}{2}},\]
which will be denoted by ${\mathcal D}_{\mathrm{rad}}$, and solve
\begin{equation}\label{S-L-resc}
-(r \, ( \psi^i_{j,p})' )'= r \left(  V^i_p + \frac{\nu_j(p)}{r^2} \right)  \psi^i_{j,p} \end{equation} 
in $[0,\frac{r_{1,p}}{\varepsilon_{0,p}}]$  if $i=0$,
in $\left[\frac{r_{i,p}}{\varepsilon_{i,p}},\frac{r_{i+1,p}}{\varepsilon_{i,p}}
\right]$ if $i= 1,\ldots, m-1$, 
with
\begin{equation}\label{pot-risc}
 V^i_p(r) :=  (\e_{i,p})^2V_p(\e_{i,p} r),
\end{equation}
where $V_{p}$ is defined in \eqref{V_p}.
Moreover by the definition \eqref{autof-resc} and the normalization \eqref{normalization}, we have 
\begin{equation}\label{norm1}
\int_0^{\infty}r^{-1}(\psi_{j,p}^i)^2dr\leq\int_0^1r^{-1}(\psi_{j,p})^2dr=1
\end{equation}
\begin{equation}\label{norm2}
\int_0^{\infty}r((\psi_{j,p}^i)')^2dr\leq\int_0^1r(\psi_{j,p}')^2dr.
\end{equation}

\

Thanks to Lemma \ref{teo:asympt} the set $\left[0, \frac{r_{1,p}}{\e_{0,p}}\right)$  invades $[0,\infty)$ in the limit as $p\rightarrow +\infty$, while the sets $\left(\frac{r_{i,p}}{\e_{i,p}}, \frac{r_{i\!+\!1,p}}{\e_{i,p}}\right)$, for $i=1,\dots m-1$,   invade $(0,\infty)$. Furthermore  \begin{eqnarray} 
\label{pot-lim-0} && V^0_{p}=\left(1+\frac{u_{p}^{0}}{p}\right)^{p-1} \longrightarrow  e^{Z_{0}} \ \text{ in } C^{0}_{\loc}[0,\infty), \\
\label{pot-lim-i}	&&  V^i_{p}=\left(1+\frac{u_{p}^{i}}{p}\right)^{p-1} \longrightarrow  e^{Z_{i}}  	\  \text{ in } C^{0}_{\loc}(0,\infty), \mbox{ for }i=1,\ldots, m-1,
\end{eqnarray}
 where $Z_i$ are the functions in \eqref{bubblecomplete}. Hence, if we prove that we can pass to the limit into equations \eqref{S-L-resc}, then the natural limit problems  will be the following eigenvalue problems 
\begin{equation}\label{S-L-lim}
\begin{cases}-(r \, (\eta)' )'= r \left( e^{Z^i} + \frac{\beta}{r^2} \right) \eta &  \text{ as }  \ r> 0 , \\
\eta \in {\mathcal D}_{\mathrm{rad}}, & \end{cases}
\end{equation}
for $i=0,\ldots, m-1$.
From \cite[Section 5]{DIP-N>3} and \cite[Section 5.2]{AG-N2} we know that \eqref{S-L-lim} admits only one negative eigenvalue, which can be explicitly characterized:
\begin{lemma}\label{lemma:problemaLimiteAutovalori}
Let $i\in \{0,\ldots, m-1\}$ and let $\beta$ be an eigenvalue to \eqref{S-L-lim}. Then
\begin{equation}\label{S-L-lim-autov}
\beta<0\quad \mbox{ iff }\quad\beta=\beta^i := - \left(\frac{\theta_i}{2}\right)^2,
\end{equation}
where $\theta_{i}$ is the number given by \eqref{introtheta}. Moreover in such a case the eigenvalue $\beta^{i}$ is simple and its eigenspace is  spanned by 
\begin{equation}\label{S-L-lim-autof}
\eta(r)=\eta^i(r):= \frac{\sqrt{\theta_i\gamma_i} \, r^{\frac{\theta_i}2}}{\gamma_i + r^{\theta_i}},  \quad\mbox{ where }\quad\gamma_i:=	\frac{\theta_{i}+2}{\theta_{i}-2} \left(\frac{\theta_{i}^2-4}{2}\right)^{\frac{\theta_{i}}{2}}.
\end{equation}
\end{lemma}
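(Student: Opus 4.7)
The plan is to reduce \eqref{S-L-lim} to a one-dimensional Schr\"odinger equation on $\mathbb R$ with a P\"oschl--Teller potential, whose negative spectrum can be computed explicitly. First I would perform the substitution $s=\log r$ and set $\tilde\eta(s) := \eta(e^s)$. A direct computation gives
\[
\int_0^\infty r|\eta'(r)|^2\,dr = \int_{\mathbb R}|\tilde\eta_s(s)|^2\,ds,\qquad \int_0^\infty r^{-1}\eta(r)^2\,dr = \int_{\mathbb R}\tilde\eta(s)^2\,ds,
\]
so that $\eta\in\mathcal D_{\mathrm{rad}}$ if and only if $\tilde\eta\in H^1(\mathbb R)$; moreover $r(r\eta')' = \tilde\eta_{ss}$. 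Multiplying \eqref{S-L-lim} by $r$ and rewriting everything in the variable $s$ transforms the equation into
\[
-\tilde\eta_{ss} - W_i(s)\,\tilde\eta = \beta\,\tilde\eta \quad\text{on } \mathbb R,\qquad W_i(s) := (r^2 e^{Z_i})\big|_{r=e^s} = \frac{2\theta_i^2\gamma_i\, e^{\theta_i s}}{(\gamma_i + e^{\theta_i s})^2}.
\]

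Next I would apply the further rescaling $\tau := (\theta_i s - \log\gamma_i)/2$, under which a short computation yields $W_i = (\theta_i^2/2)\,\mathrm{sech}^2\tau$ and $d^2/ds^2 = (\theta_i/2)^2\,d^2/d\tau^2$. Setting $\tilde\beta := \beta/(\theta_i/2)^2$ and dividing through, the problem reduces to
\[
-\tilde\eta_{\tau\tau} - 2\,\mathrm{sech}^2\tau\;\tilde\eta = \tilde\beta\,\tilde\eta \quad\text{on } \mathbb R,\qquad \tilde\eta\in H^1(\mathbb R).
\]
This is the classical P\"oschl--Teller operator $-\partial_\tau^2 - n(n+1)\mathrm{sech}^2\tau$ with $n=1$, whose essential spectrum is $[0,\infty)$ and whose only negative bound state is $\tilde\beta = -1$, simple and spanned by $\mathrm{sech}\,\tau$. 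This is precisely what is established in the case $i=0$ in \cite[Sec.~5]{DIP-N>3} and, modulo the additional rescaling performed above, for every $i\ge 1$ in \cite[Sec.~5.2]{AG-N2}.

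Finally, reversing the substitutions, $\tilde\beta = -1$ becomes $\beta = -(\theta_i/2)^2 = \beta^i$, and
\[
\mathrm{sech}\,\tau = \frac{2\sqrt{\gamma_i}\, r^{\theta_i/2}}{\gamma_i + r^{\theta_i}}
\]
is a scalar multiple of $\eta^i$, which gives simplicity and the explicit description of the eigenspace. The main obstacle, and the only step that is not routine, is the algebraic identification of $W_i$ with a rescaled $\mathrm{sech}^2$ profile: this is the computation that pins the negative eigenvalue to the sequence $(\theta_i)_{i\in\mathbb N}$ of \eqref{introtheta}, and it is precisely here that the specific form of the bubbles $Z_i$ and of the normalizing constants $\gamma_i$ in \eqref{bubblecomplete} enters decisively. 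Once this identification is in hand, everything else is standard one-dimensional Schr\"odinger theory.
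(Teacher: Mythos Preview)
Your argument is correct. The change of variables $s=\log r$ indeed turns \eqref{S-L-lim} into a Schr\"odinger equation on $\mathbb R$, and the further rescaling $\tau=(\theta_i s-\log\gamma_i)/2$ brings the potential $r^2 e^{Z_i}$ exactly to $(\theta_i^2/2)\,\mathrm{sech}^2\tau$; from there the P\"oschl--Teller spectrum with $n=1$ yields the claim. All the algebraic checks go through.

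As for the comparison: the paper does not give its own proof of this lemma---it simply states the result and attributes it to \cite[Section~5]{DIP-N>3} for $i=0$ and \cite[Section~5.2]{AG-N2} for $i\ge 1$. Your write-up is essentially an expanded version of what those references contain (the same logarithmic substitution and $\mathrm{sech}^2$ identification appear there), so you are not taking a genuinely different route; you are supplying the details the paper chose to cite away. If anything, your presentation has the advantage of handling all $i$ uniformly through one extra rescaling, rather than treating $i=0$ and $i\ge1$ separately.
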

Notice that $\eta^i$ is normalized so that
\begin{equation}\label{eta-normalizziata}
\int_0^{\infty} r^{-1}(\eta^i)^2 dr =1 .
\end{equation}

As a consequence of Lemma \ref{lemma:problemaLimiteAutovalori} it follows that all the numbers $\beta=\beta^{i}$ in \eqref{S-L-lim-autov}, for $i=0,\ldots,m-1$,  are   candidates to be the limit value of each eigenvalue $\nu_{j}(p)$, as $p\rightarrow +\infty.$

We remark that, for $i=0,\ldots, m-1$, the limit problems \eqref{S-L-lim}, as well as their  negative eigenvalue $\beta^{i}$ in \eqref{S-L-lim-autov}, are different  from one another, in particular combining \eqref{S-L-lim-autov} and \eqref{useful2}  we know that the following strict order holds:
\begin{equation}\label{ordine-beta}
\beta^{m-1}< \dots \beta^1 <-25 < \beta^0= -1 .\end{equation}
In order to select the right limit value of $\nu_{j}(p)$ among all the $\beta^{i}$'s,
we need  thus to understand which one (if any) among the possible scalings  $\psi^i_{j,p}$, for $i=0,\ldots, m-1$, does not vanish as $p\to\infty$. 
\\

We shall see that
\begin{theorem}
	\label{thm:autovaloriAutofunzioni}
	For any $j=1,\ldots, m$ 
	\begin{equation}\label{limj}
	\lim_{p\rightarrow +\infty} \nu_{j}(p)=\beta^{m-j}  = - \left(\frac{\theta_{m-j}}2\right)^2
	\end{equation}
	Moreover there exists $A_{j}\neq 0$ such that 
	\begin{eqnarray*}
		&&\psi^{m-j}_{j,p} \to  A_{j}\eta^{m-j}\\
		&&\psi_{j,p}^{i}\to  0,\qquad \quad i=0,\ldots, m-1, \ i\neq m-j
	\end{eqnarray*}
	weakly in $\mathcal D_{\mathrm{rad}}$ and strongly in $C^1_{\loc}(0,\infty)$.
\end{theorem}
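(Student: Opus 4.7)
The plan is to combine uniform bounds on the rescaled eigenfunctions with weak compactness and the classification in Lemma \ref{lemma:problemaLimiteAutovalori}, then use simultaneously the strict orderings \eqref{est-rse} of $\nu_j(p)$ and \eqref{ordine-beta} of the $\beta^i$ to identify each limit by induction on $j$.

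\textbf{Step 1: compactness and limit problems.} Testing \eqref{S-L} against $\psi_{j,p}$ and using $r^2 V_p = f_p\leq C$ from \eqref{stima-uno} together with the normalization \eqref{normalization}, one deduces both that $\nu_j(p)\in[-C,0)$ and, via \eqref{norm1}--\eqref{norm2}, that each rescaled eigenfunction $\psi^i_{j,p}$ is bounded in $\mathcal D_{\mathrm{rad}}$. Along a subsequence $\nu_j(p)\to\nu_j^\infty$ and $\psi^i_{j,p}\rightharpoonup\psi^i_{j,\infty}$ in $\mathcal D_{\mathrm{rad}}$; by elliptic regularity applied to \eqref{S-L-resc} the convergence is $C^1_{\loc}(0,\infty)$ (resp.\ $C^1_{\loc}[0,\infty)$ when $i=0$). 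Passing to the limit in \eqref{S-L-resc} through \eqref{pot-lim-0}--\eqref{pot-lim-i}, each $\psi^i_{j,\infty}$ solves \eqref{S-L-lim} with $\beta=\nu_j^\infty$. Since the $\beta^i$'s are mutually distinct by \eqref{ordine-beta}, Lemma \ref{lemma:problemaLimiteAutovalori} forces every $\psi^i_{j,\infty}$ to vanish except possibly for one index $i_j$, for which $\nu_j^\infty=\beta^{i_j}$ and $\psi^{i_j}_{j,\infty}=A_j\,\eta^{i_j}$.

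\textbf{Step 2: upper bound and the pairing $i_j=m-j$.} To pin down $i_j$ I would establish $\limsup_p\nu_j(p)\leq\beta^{m-j}$ by inserting in the Rayleigh quotient of \eqref{var-char} the trial function $\Phi_p(r):=\chi(r/\varepsilon_{m-j,p})\,\eta^{m-j}(r/\varepsilon_{m-j,p})$, with $\chi$ a smooth compactly supported cutoff (modified so as to include the origin when $m-j=0$). By Lemma \ref{teo:asympt} and \eqref{eta-normalizziata}, the rescaled Rayleigh quotient of $\Phi_p$ tends to $\beta^{m-j}$ as $p\to\infty$ and the cutoff is relaxed. Admissibility in \eqref{var-char} requires $\underline\perp$-orthogonality to $\psi_{1,p},\ldots,\psi_{j-1,p}$, so one passes to the orthogonal projection $\widetilde\Phi_p$; assuming inductively that $\psi_{i,p}$ concentrates at scale $\varepsilon_{m-i,p}\neq\varepsilon_{m-j,p}$ for $i<j$, the scale separation coming from Lemma \ref{teo:asympt} shows the projection corrections are $o(1)$. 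Combined with \eqref{est-rse}, which yields $\nu_j(p)>\nu_{j-1}(p)\to\beta^{m-j+1}$, and with \eqref{ordine-beta}, this two-sided control forces $i_j=m-j$ at each inductive step, starting from $j=1$.

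\textbf{Step 3: nondegeneracy of $A_j$ and main obstacle.} It remains to exclude $A_j=0$, i.e.\ to show that the mass $1=\int_0^1 r^{-1}\psi_{j,p}^2\,dr$ does not escape to other scales. Splitting $[0,1]$ into the bubble regions $[\varepsilon_{i,p}/K,K\varepsilon_{i,p}]$ and the intermediate regions $G_p(K)$ from \eqref{def:G}, the contributions from bubbles with $i\neq m-j$ are $o(1)$ by Step~1, while on $G_p(K)$ the estimate \eqref{stima-due} reduces \eqref{S-L} asymptotically to $-(r\psi')'\approx\nu_j^\infty r^{-1}\psi$, whose solutions are combinations of $r^{\pm\theta_{m-j}/2}$; matching coefficients across consecutive bubble scales via the $C^1_{\loc}$ convergence, one concludes that the mass on $G_p(K)$ vanishes as $p\to\infty$ and then $K\to\infty$. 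Therefore $\int_0^\infty r^{-1}(\psi^{m-j}_{j,\infty})^2\,dr=1$, and \eqref{eta-normalizziata} gives $A_j=\pm 1$; uniqueness of the limit then promotes convergence to the full sequence. The main obstacle is precisely this last mass-concentration analysis: controlling $\psi_{j,p}$ in the intermediate regions $G_p(K)$ via the homogeneous Sturm-Liouville model and matching through the $m-1$ bubble transitions is where the two-dimensional tower-of-bubbles structure of Lemma \ref{teo:asympt} is most delicately used, with the orthogonalized test-function construction of Step~2 a second technical point.
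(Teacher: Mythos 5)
Your Step 1 and the upper bound in Step 2 track the paper's preliminaries (Lemma \ref{lemma:bound}, Proposition \ref{lem:conv-autof-resc}, Corollary \ref{remark}, Lemma \ref{lemma:C1loc}) and its Lemmas \ref{lemma:stimaPrimoAutovalore}, \ref{lemma:stimaAutovalorehesimo}. The genuine gap is your final assertion in Step 2, that $\limsup_p \nu_j(p)\le \beta^{m-j}$ together with $\nu_j(p)>\nu_{j-1}(p)\to\beta^{m-j+1}$ and \eqref{ordine-beta} ``forces $i_j=m-j$.'' It does not: the strict inequality $\nu_j(p)>\nu_{j-1}(p)$ is not preserved in the limit, so the two-sided control only gives $\bar\nu_j\in[\beta^{m-j+1},\beta^{m-j}]$, and discreteness of the $\beta^i$'s leaves the \emph{two} candidates $\beta^{m-j}$ and $\beta^{m-j+1}$. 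The scenario in which $\psi_{j,p}$ concentrates at the \emph{same} scale $\varepsilon_{m-j+1,p}$ as $\psi_{j-1,p}$, with $\bar\nu_j=\bar\nu_{j-1}=\beta^{m-j+1}$, is not excluded. Your Step 3 cannot repair this, because it already presupposes the scale $m-j$ has been selected (``contributions from bubbles with $i\neq m-j$ are $o(1)$ by Step 1'').

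The missing mechanism is the orthogonality--contradiction argument run in Proposition \ref{prop:autovAutofIteration}: assume for contradiction $\overline\psi^{m-j}_j\equiv 0$; splitting $-\nu_j(p)\le\int_0^1 rV_p\psi_{j,p}^2\,dr$ over $G_p(K)$ (small by Lemma \ref{lemma-stima-fp}) and the bubble annuli shows that some $\overline\psi^{m-\kappa}_j=A_j\eta^{m-\kappa}$ is nontrivial with $\kappa<j$; but then passing to the limit in $\int_0^1 r^{-1}\psi_{\kappa,p}\psi_{j,p}\,dr=0$ — splitting again over $G_p(K)$, controlled by Lemma \ref{Lemma:NormaL2pesataPiccolaSuInsiemini}, and the bubble annuli — yields $A_\kappa A_j=0$, contradicting the inductive hypothesis $A_\kappa\neq0$. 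This twofold use of the scale decomposition, once in the inequality $-\nu_j(p)\le\int_0^1 rV_p\psi_{j,p}^2\,dr$ to prove nontriviality and once inside the $\underline\perp$-orthogonality relation to exclude the wrong scale, is the technical heart of the proof and is absent from your plan. Two smaller remarks: your claim $A_j=\pm1$ is stronger than what the paper proves or needs (Corollary \ref{remark} only gives $0<|A_j|\le1$ via Fatou), and your proposed Sturm--Liouville matching across the $m-1$ transitions would be substantially harder to make rigorous than the direct integration-by-parts estimate of Lemma \ref{Lemma:NormaL2pesataPiccolaSuInsiemini}.
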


Observe that Theorem \ref{thm:autovaloriAutofunzioni} describes the asymptotic also for the last eigenvalue $\nu_{m}(p)$, %which 
even if this is not needed for the computation of the Morse index.

\

\subsection{The proof of Theorem \ref{thm:autovaloriAutofunzioni}} \label{section:proofEigenvalues}

The proof of Theorem \ref{thm:autovaloriAutofunzioni} is based on an iterative procedure on the index $j$.

First we prove the result for $j=1$:
\begin{proposition} \label{prop:BaseInduttiva}
	\begin{equation}\label{lim1}
	\lim_{p\rightarrow +\infty}\nu_{1}(p)=\beta^{m-1}
	\end{equation}
	Moreover there exists $A_{1}\neq 0$ such that 
	\begin{eqnarray*}
		&&\psi_{1}^{m-1} \to A_{1}\eta^{m-1}  \\
		&&\psi_{1}^{i}\to  0, \qquad  \qquad i=0,\ldots, m-2
	\end{eqnarray*}
\end{proposition}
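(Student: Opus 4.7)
The plan is to bracket $\nu_1(p)$ between an upper bound obtained by a test-function argument and a matching lower bound produced by passing to the limit in the rescaled equation \eqref{S-L-resc}, identifying the limit via Lemma \ref{lemma:problemaLimiteAutovalori}. For the upper bound I would insert into the Rayleigh quotient \eqref{var-char-1} the admissible test function
\[ \tilde\psi_p(r):=\chi(r)\,\eta^{m-1}\!\left(\tfrac{r}{\e_{m-1,p}}\right), \]
where $\chi\in C^\infty_c[0,1)$ equals $1$ near $0$. After the change of variable $r=\e_{m-1,p}s$, the potential convergence \eqref{pot-lim-i} combined with the fast decay of $\eta^{m-1}$ at infinity (so that $\chi$ is effectively invisible at the relevant scale) allows one to pass to the limit in both numerator and denominator; since the limit quotient equals the Rayleigh quotient of $\eta^{m-1}$ for problem \eqref{S-L-lim} with $i=m-1$, which is $\beta^{m-1}$, this yields $\limsup_{p\to\infty}\nu_1(p)\le\beta^{m-1}$.

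For the lower bound I would first establish boundedness by testing \eqref{S-L} with $\psi_{1,p}$: together with the normalization \eqref{normalization} and the uniform bound $f_p\le C$ from Lemma \ref{lemma-stima-fp}, this gives $\int_0^1 r(\psi_{1,p}')^2 dr\le C+\nu_1(p)\le C$ and likewise $\nu_1(p)\ge -C$. By \eqref{norm1}--\eqref{norm2} every rescaling $\psi^i_{1,p}$ is bounded in $\mathcal D_{\mathrm{rad}}$, so along a subsequence $\nu_1(p)\to\nu^*\le\beta^{m-1}<0$ and $\psi^i_{1,p}\rightharpoonup \eta^*_i$ weakly in $\mathcal D_{\mathrm{rad}}$ and strongly in $C^1_{\loc}$ on the limit domains (which are $[0,\infty)$ for $i=0$ and $(0,\infty)$ otherwise, by Lemma \ref{teo:asympt}). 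Passing to the limit in \eqref{S-L-resc} using \eqref{pot-lim-0}--\eqref{pot-lim-i}, each $\eta^*_i$ solves the limit problem \eqref{S-L-lim} for index $i$ with $\beta=\nu^*$.

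The crux, and the main expected obstacle, is to show that at least one limit $\eta^*_i$ is non-trivial, i.e.\ to rule out that the whole $\mathcal L$-mass of $\psi_{1,p}$ escapes into the transition regions $G_p(K)$. I would decompose
\[ 1=\int_0^1 r^{-1}\psi_{1,p}^2\,dr=\sum_{i=0}^{m-1}\int_{I_{i,p}(K)}r^{-1}\psi_{1,p}^2\,dr+\int_{G_p(K)}r^{-1}\psi_{1,p}^2\,dr, \]
where $I_{i,p}(K)$ denote the bubble-core intervals complementary to $G_p(K)$ in \eqref{def:G}. The smallness of $f_p$ on $G_p(K)$ from Lemma \ref{lemma-stima-fp}, combined with the equation satisfied by $\psi_{1,p}$ and a Hardy-type estimate to handle the singular weight $r^{-1}$, forces the $G_p(K)$-integral to be $O(\delta)$ uniformly in $p$ when $K$ is large. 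The surviving bubble contributions rewrite as $\int_{1/K}^{K} s^{-1}(\psi^i_{1,p})^2\,ds$, which by the strong $C^1_{\loc}$ convergence tends to $\int_{1/K}^{K} s^{-1}(\eta^*_i)^2\,ds$; hence some $\eta^*_i$ is non-trivial. By Lemma \ref{lemma:problemaLimiteAutovalori}, $\nu^*=\beta^i$, and the ordering \eqref{ordine-beta} together with $\nu^*\le\beta^{m-1}$ forces $i=m-1$, whence $\nu^*=\beta^{m-1}$ and $\eta^*_{m-1}=A_1\eta^{m-1}$ with $A_1\ne 0$. For any $i\ne m-1$ the value $\beta^{m-1}$ is not a negative eigenvalue of \eqref{S-L-lim} (again by Lemma \ref{lemma:problemaLimiteAutovalori} and \eqref{ordine-beta}), so the corresponding $\eta^*_i$ must vanish, completing the proof of the base case.
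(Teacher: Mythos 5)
Your proposal is correct and reaches the same conclusion via the same variational-plus-compactness scaffolding, but it diverges from the paper's proof in two concrete places.

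For the upper bound on $\nu_1(p)$, the paper inserts a test function of the form $\eta^{m-1}(r/\e_p)\,\Phi(r/\e_p)$, where the cutoff $\Phi$ lives at a \emph{fixed scale in the rescaled variable} $s=r/\e_p$. This makes passage to the limit with the potential automatic, because $V^{m-1}_p\to e^{Z_{m-1}}$ only in $C^0_{\loc}(0,\infty)$ and the test function is compactly supported in $s$ away from $0$ and $\infty$. You instead use a cutoff $\chi(r)$ at a fixed physical scale; after rescaling the support of $\chi(\e_p s)\eta^{m-1}(s)$ grows like $1/\e_p$, so the convergence does not follow directly from \eqref{pot-lim-i}. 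It can still be pushed through using the uniform bound $f_p\le C$ from Lemma \ref{lemma-stima-fp} and the convergence of $\int s^{-1}(\eta^{m-1})^2\,ds$ to control both the $s\to 0$ and the $s\to\infty$ tails, but that step needs to be spelled out; in particular you only mention the $s\to\infty$ decay and omit the tail near $s=0$, where $V^{m-1}_p$ is likewise not uniformly controlled. The paper's scaled cutoff sidesteps the issue.

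For the nontriviality, you normalize via the $\mathcal L$-mass $\int_0^1 r^{-1}\psi_{1,p}^2\,dr=1$ and rule out mass escape into the transition regions $G_p(K)$ — in effect reproving the paper's Lemma \ref{Lemma:NormaL2pesataPiccolaSuInsiemini} as a preliminary. The paper's proof of Proposition \ref{prop:BaseInduttiva} does not use that lemma at all (it is reserved for the inductive step, Proposition \ref{prop:autovAutofIteration}). Instead it starts from $-\nu_1(p)\le \int_0^1 r V_p\psi_{1,p}^2\,dr$, splits this potential integral into the $G_p(K)$ piece (small by Lemma \ref{lemma-stima-fp} alone, no Hardy-type estimate needed), the $i=0$ core (small because $\overline\psi^0_1\equiv 0$, already known from Corollary \ref{remark}-$(i)$ and Lemma \ref{lemma:stimaPrimoAutovalore}), and the cores $i=1,\dots,m-1$, whose only surviving term is $\int_{1/K}^K r e^{Z_{m-1}}(\overline\psi^{m-1}_1)^2\,dr$. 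Taking $\liminf$ and using $\limsup\nu_1\le\beta^{m-1}$ gives $\int_{1/K}^K r e^{Z_{m-1}}(\overline\psi^{m-1}_1)^2\,dr\ge -\beta^{m-1}-3\delta>0$. This is a more direct route: it uses a quantity for which the lower bound is free (it equals $-\nu_1(p)$ plus the nonnegative Dirichlet energy), and it exploits the vanishing of $\overline\psi^i_1$ for $i\le m-2$, which you have already established, rather than deferring to an auxiliary mass-control lemma. Your variant is logically sound, but it front-loads more machinery for the base case than is needed.
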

Then we prove the inductive step
\begin{proposition}
	\label{prop:autovAutofIteration}
	Let $h\in\{2,\ldots , m-1\}$. Assume that Theorem \ref{thm:autovaloriAutofunzioni} holds true for any $j=1,\ldots, h-1$. Then it holds true for $j=h.$
\end{proposition}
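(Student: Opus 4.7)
The strategy mirrors the one used in the base case (Proposition \ref{prop:BaseInduttiva}): we combine a variational upper bound for $\nu_h(p)$ obtained by plugging in an appropriately rescaled and orthogonalized test function, with a lower bound obtained via asymptotic analysis of the rescaled eigenfunctions $\psi_{h,p}^i$ at every scale $\e_{i,p}$. The essential new ingredient, compared with the base case, is that the test function and the limit profile must be shown to be asymptotically orthogonal to the previously identified eigenfunctions $\psi_{1,p},\dots,\psi_{h-1,p}$, which by the inductive hypothesis concentrate at the \emph{smaller} scales $\e_{m-1,p}\ll\e_{m-2,p}\ll\ldots\ll\e_{m-h+1,p}$ (all strictly smaller than the target scale $\e_{m-h,p}$, since $\e_{i,p}/\e_{i+1,p}\to 0$ by Lemma \ref{teo:asympt}).

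For the upper bound, the plan is to build a test function of the form $\widetilde\psi_p(r)=\eta^{m-h}(r/\e_{m-h,p})\chi_p(r)-\sum_{j=1}^{h-1}c_{j,p}\psi_{j,p}$, where $\chi_p$ is a smooth cutoff that equals $1$ on $[\e_{m-h,p}/\delta_p,\,\delta_p\e_{m-h,p}]$ for some $\delta_p\to\infty$ chosen slowly enough so that $\widetilde\psi_p\in\mathcal H_{0,\mathrm{rad}}$, and the coefficients $c_{j,p}$ are selected to enforce $\widetilde\psi_p\underline\perp\psi_{1,p},\dots,\psi_{h-1,p}$. Changing variables and invoking \eqref{pot-lim-i}, the Rayleigh quotient of the unprojected part tends to $\beta^{m-h}$ (this is essentially the quotient realized by $\eta^{m-h}$ in the limit problem \eqref{S-L-lim} with $i=m-h$). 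The mixed terms $c_{j,p}=\int_0^1 r^{-1}\eta^{m-h}(\cdot/\e_{m-h,p})\chi_p\,\psi_{j,p}dr$ are shown to be $o(1)$ by rescaling by $\e_{m-j,p}$ and using that $\e_{m-j,p}/\e_{m-h,p}\to\infty$ together with the decay $\eta^{m-h}(t)\sim t^{-\theta_{m-h}/2}$ at infinity and the $C^1_{\loc}$-convergence $\psi^{m-j}_{j,p}\to A_j\eta^{m-j}$ from the inductive hypothesis. Hence \eqref{var-char} yields $\limsup_{p\to\infty}\nu_h(p)\le\beta^{m-h}$.

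For the matching lower bound, normalize $\|\psi_{h,p}\|_{\mathcal L}=1$, pass to a subsequence along which $\nu_h(p)\to\nu_\infty\in[\beta^{m-h+1},0]$ (the lower bound follows from $\nu_{h-1}(p)<\nu_h(p)$ and the inductive hypothesis) and, for each $i=0,\dots,m-1$, extract a weak limit $\widetilde\eta_i\in\mathcal D_{\mathrm{rad}}$ of $\psi^i_{h,p}$, which is also strong in $C^1_{\loc}$. Passing to the limit in \eqref{S-L-resc} via \eqref{pot-lim-0}--\eqref{pot-lim-i} forces $\widetilde\eta_i$ to solve \eqref{S-L-lim}; by Lemma \ref{lemma:problemaLimiteAutovalori} either $\widetilde\eta_i\equiv 0$ or $\nu_\infty=\beta^i$ and $\widetilde\eta_i=c_i\eta^i$. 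Exploiting $\int_0^1 r^{-1}\psi_{h,p}\psi_{j,p}dr=0$, rescaling by $\e_{m-j,p}$ and invoking the inductive convergence $\psi^{m-j}_{j,p}\to A_j\eta^{m-j}$, together with the normalization \eqref{eta-normalizziata}, gives $c_{m-j}A_j=0$, hence $\widetilde\eta_{m-j}\equiv 0$ for every $j=1,\dots,h-1$, i.e.\ $\psi^i_{h,p}\to 0$ in $C^1_{\loc}$ for all $i>m-h$.

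It remains to show that the mass does not leak away: at least one of the remaining scales $i\in\{0,\dots,m-h\}$ must carry a nontrivial limit. The main technical obstacle is precisely this no-leakage statement, since the normalization $\int_0^1 r^{-1}\psi_{h,p}^2dr=1$ is scale-invariant and $C^1_{\loc}$-convergence does not prevent mass from escaping to $0$ or $\infty$ in the rescaled variable. The plan is to control $\psi_{h,p}$ in the \emph{gluing regions} $G_p(K)$ of Lemma \ref{lemma-stima-fp}, where $r^2V_p$ is uniformly small: there \eqref{S-L} reduces, up to an $o(1)$ perturbation of its coefficient, to the Euler equation $r^2\psi''+r\psi'+\nu\psi=0$, whose solutions are combinations of $r^{\pm\sqrt{-\nu_h(p)}}$; this yields two-sided pointwise estimates on $\psi_{h,p}$ in $G_p(K)$ (of the same nature as those already used in \cite{DIP-N2,AG-N2} for the case $m=2$), which in turn bound the contribution of $G_p(K)$ to $\int r^{-1}\psi_{h,p}^2dr$ by a quantity that vanishes as $K\to\infty$. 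Consequently, the whole mass is absorbed at one of the bubble scales, so some $\widetilde\eta_{i_0}\ne 0$, forcing $\nu_\infty=\beta^{i_0}$. Since $i_0\le m-h$ gives $\nu_\infty\ge\beta^{m-h}$ while the upper bound gives $\nu_\infty\le\beta^{m-h}$, necessarily $i_0=m-h$ and $\nu_\infty=\beta^{m-h}=-(\theta_{m-h}/2)^2$. The limit $\widetilde\eta_{m-h}=A_h\eta^{m-h}$ with $A_h\ne 0$ gives the stated convergence of eigenfunctions, while $\psi^i_{h,p}\to 0$ for $i<m-h$ follows because $\beta^i\ne\beta^{m-h}$ for those indices. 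Since every subsequence has the same limit, the full sequence converges and the induction closes.
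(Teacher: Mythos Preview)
Your overall strategy is sound and amounts to a legitimate reorganization of the paper's argument, but there is one factual slip and one step that is under-justified.

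\textbf{Scale ordering.} You write that the previously identified eigenfunctions concentrate at the ``smaller scales $\e_{m-1,p}\ll\dots\ll\e_{m-h+1,p}$, all strictly smaller than the target scale $\e_{m-h,p}$''. This is backwards: since $\e_{i,p}/\e_{i+1,p}\to 0$, the scales are increasing in $i$, so $\e_{m-1,p}\gg\dots\gg\e_{m-h+1,p}\gg\e_{m-h,p}$, i.e.\ the earlier eigenfunctions live at \emph{larger} scales than the target. Fortunately the ratio $\e_{m-j,p}/\e_{m-h,p}\to\infty$ that you actually use is correct, so the argument itself survives.

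\textbf{The orthogonality step.} Your sentence ``rescaling by $\e_{m-j,p}$ and invoking the inductive convergence $\psi^{m-j}_{j,p}\to A_j\eta^{m-j}$ gives $c_{m-j}A_j=0$'' skips the real work. The integral $\int_0^1 r^{-1}\psi_{h,p}\psi_{j,p}\,dr$ is scale-invariant; rescaling by $\e_{m-j,p}$ does not localize it. To pass to the limit you must split it into (i) the gluing set $G_p(K)$, (ii) the innermost interval $[0,K\e_{0,p}]$, and (iii) the annuli $[\e_{i,p}/K,K\e_{i,p}]$. Piece (i) requires exactly the estimate you postpone to the no-leakage step (Lemma~\ref{Lemma:NormaL2pesataPiccolaSuInsiemini} in the paper, or your Euler-equation substitute), applied to \emph{both} $\psi_{h,p}$ and $\psi_{j,p}$ via Cauchy--Schwarz. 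Piece (ii) needs the $C^1_{\loc}[0,\infty)$ convergence of $\psi^0_{j,p}$ and $\psi^0_{h,p}$ (Lemma~\ref{lemma:C1loc}, second part), together with the pointwise bound $|\psi^0_{\cdot,p}(r)|\le r^2$ to handle the singular weight $r^{-1}$ near $0$; this requires $\bar\nu_h\le -25$, which does follow from your upper bound since $\beta^{m-h}\le\beta^1<-25$. Piece (iii) uses the inductive hypothesis $\psi^i_{j,p}\to 0$ for $i\ne m-j$. Only then does the piece at scale $m-j$ yield $c_{m-j}A_j\int_{1/K}^K r^{-1}(\eta^{m-j})^2\,dr$, and letting $K\to\infty$ gives $c_{m-j}A_j=0$. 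The tools are all there, but the dependency on the gluing-region estimate must be made explicit and moved earlier.

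\textbf{Comparison with the paper.} The paper proceeds in a different order: the upper bound $\limsup\nu_h\le\beta^{m-h}$ immediately rules out the inner scales $i<m-h$ via Corollary~\ref{remark}; then, assuming by contradiction $\overline\psi_h^{m-h}=0$, an energy splitting $-\nu_h(p)\le\int rV_p\psi_{h,p}^2$ (using only Lemma~\ref{lemma-stima-fp}, not Lemma~\ref{Lemma:NormaL2pesataPiccolaSuInsiemini}) forces some outer $\overline\psi_h^{m-\kappa}\ne 0$, and orthogonality with the single $\psi_{\kappa,p}$ gives the contradiction. Your route is arguably more direct---orthogonality for all $j<h$ rules out the outer scales, no-leakage pins down a surviving scale $i_0\le m-h$, and the upper bound forces $i_0=m-h$---but it invokes the gluing-region estimate twice (once implicitly in the orthogonality step, once in no-leakage), whereas the paper uses it only once. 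Your Euler-equation comparison for $G_p(K)$ is a valid alternative to the paper's Lemma~\ref{Lemma:NormaL2pesataPiccolaSuInsiemini}, which instead tests the equation with $\psi_{j,p}$ and controls boundary terms via $C^1_{\loc}$ convergence.
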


The last eigenvalue has to be treated separately, namely we conclude proving
	
	\begin{proposition} \label{prop:autovAutofUltimo}
		\begin{equation}\label{limm}
		\lim_{p\rightarrow +\infty}\nu_{m}(p)=\beta^0=-1
		\end{equation}
		Moreover there exists $A_{m}\neq 0$ such that
		\begin{eqnarray*}
			&&\psi_{m}^{0} \to A_{m}\eta^{0}\\
			&&\psi_{m}^{i}\to 0, \quad \qquad i=1,2,\ldots, m-1 .
		\end{eqnarray*}
		%Moreover there exists $A_{1}\neq 0$ such that\begin{eqnarray*}	&&\overline\psi_{1}^{m-1}=A_{1}\eta^{m-1}\\	&&\overline\psi_{1}^{i}= 0,\qquad i=0,\ldots, m-2\end{eqnarray*}
	\end{proposition}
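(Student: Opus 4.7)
The strategy combines a variational upper bound for $\nu_m(p)$ with a concentration analysis of the rescaled eigenfunctions. The starting point is the a priori bound $-1<\nu_m(p)<0$ from \eqref{est-rse}, which forces any subsequential limit to lie in $[-1,0]$. By the ordering \eqref{ordine-beta}, the only candidate value of $\beta^i$ compatible with this interval is $\beta^0=-1$, so one expects $\nu_m(p)\to-1$ with concentration at the innermost bubble $Z_0$.

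First I would prove $\limsup_{p\to\infty}\nu_m(p)\le -1$ by constructing a suitable competitor in the variational characterization \eqref{var-char}. Starting from a smooth compactly supported cut-off $\chi\eta^0$ of the eigenfunction from Lemma \ref{lemma:problemaLimiteAutovalori}, I rescale it to the scale of the innermost bubble, setting $\phi_p(r):=(\chi\eta^0)(r/\varepsilon_{0,p})$, and then project it onto the $\mathcal{L}$-orthogonal complement of $\mathrm{span}(\psi_{1,p},\ldots,\psi_{m-1,p})$, obtaining $\bar\phi_p:=\phi_p-\sum_{j=1}^{m-1} c_{j,p}\,\psi_{j,p}$ with $c_{j,p}:=\int_0^1 r^{-1}\phi_p\,\psi_{j,p}\,dr$. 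A change of variables combined with \eqref{pot-lim-0} shows that $\mathcal{R}_p(\phi_p)$ converges to the Rayleigh quotient of $\chi\eta^0$ for the limit problem \eqref{S-L-lim} with $i=0$, which can be made arbitrarily close to $\beta^0=-1$. The projection coefficients $c_{j,p}$ vanish in the limit: by the inductive hypothesis $\psi_{j,p}$ concentrates at the much larger scale $\varepsilon_{m-j,p}$ (so $\varepsilon_{0,p}/\varepsilon_{m-j,p}\to 0$ for every $j\le m-1$), and because $\eta^{m-j}(s)\sim s^{\theta_{m-j}/2}$ as $s\to 0$ by \eqref{S-L-lim-autof}, the contribution of $\psi_{j,p}$ on the support of $\phi_p$ is negligible. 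Combining with the lower bound from \eqref{est-rse} yields $\nu_m(p)\to-1$.

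Next I would identify the asymptotic profile. The uniform bounds \eqref{norm1}-\eqref{norm2}, together with elliptic regularity applied to \eqref{S-L-resc}, yield (up to a subsequence) $\psi^i_{m,p}\rightharpoonup \eta^{i,\infty}$ weakly in $\mathcal{D}_{\mathrm{rad}}$ and strongly in $C^1_{\loc}(0,\infty)$, for every $i=0,\ldots,m-1$. Passing to the limit in \eqref{S-L-resc} via \eqref{pot-lim-0}-\eqref{pot-lim-i} and $\nu_m(p)\to-1$ shows that $\eta^{i,\infty}$ solves \eqref{S-L-lim} with eigenvalue $-1$. For $i\ge 1$ the strict ordering \eqref{ordine-beta} gives $-1\ne\beta^i$, hence $\eta^{i,\infty}\equiv 0$ by Lemma \ref{lemma:problemaLimiteAutovalori}; for $i=0$, simplicity of $\beta^0$ forces $\eta^{0,\infty}=A_m\eta^0$ for some $A_m\in\mathbb{R}$.

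The main obstacle is to prove $A_m\ne 0$, which I expect to be the technical core. I would argue by contradiction: if $A_m=0$ then $\psi^i_{m,p}\to 0$ in $C^1_{\loc}(0,\infty)$ for every $i$. Testing \eqref{S-L} against $\psi_{m,p}$ and using the normalization \eqref{normalization} gives
\[
\int_0^1 r\,|\psi_{m,p}'|^2\,dr \;=\; \int_0^1 r\,V_p\,\psi_{m,p}^2\,dr \;+\;\nu_m(p).
\]
The left-hand side is non-negative and the last term tends to $-1$, so it suffices to show $\int_0^1 r V_p\psi_{m,p}^2\,dr\to 0$ to reach the contradiction $0\le -1$. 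Writing $rV_p=f_p(r)/r$ and splitting $[0,1]$ into the bubble annuli $[\varepsilon_{i,p}/K, K\varepsilon_{i,p}]$ and the complement $G_p(K)$ from \eqref{def:G}, I would bound the contribution over $G_p(K)$ by $\delta\int_0^1 r^{-1}\psi_{m,p}^2\,dr=\delta$ thanks to \eqref{stima-due}, while on each bubble annulus the change of variables $r=\varepsilon_{i,p}s$ together with $f_p(\varepsilon_{i,p}s)\to s^2 e^{Z_i(s)}$ and the $C^1_{\loc}$-vanishing $\eta^{i,\infty}\equiv 0$ makes the integral $o(1)$. Letting finally $\delta\to 0$ delivers the contradiction and proves $A_m\ne 0$, completing the proof.
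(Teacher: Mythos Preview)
Your proof follows essentially the same route as the paper's: both combine the a priori bound \eqref{est-rse} with the variational upper bound (Lemma~\ref{lemma:stimaAutovalorehesimo} for $h=m$, which you re-derive) to obtain $\nu_m(p)\to-1$, then invoke Proposition~\ref{lem:conv-autof-resc} and Lemma~\ref{lemma:problemaLimiteAutovalori} to identify the limit profiles, and finally show $A_m\neq 0$ by the same splitting into $G_p(K)$ and bubble regions. The only cosmetic difference is that you phrase the $A_m\neq 0$ step as a contradiction, whereas the paper directly bounds $-\nu_m(p)$ from below by $A_m^2\int_0^K r\, e^{Z_0}(\eta^0)^2\,dr$ up to a small error.

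One technical point deserves care, however. Your ``bubble annuli'' $[\varepsilon_{i,p}/K,\,K\varepsilon_{i,p}]$ together with $G_p(K)$ do \emph{not} cover $[0,1]$: the piece $[0,\varepsilon_{0,p}/K]$ is missing. This is not innocuous here, because for $j=m$ the limiting eigenvalue is $-1>-25$, so Lemma~\ref{lemma:C1loc} only yields $C^1_{\loc}(0,\infty)$ convergence of $\psi_{m,p}^0$, not $C^1_{\loc}[0,\infty)$; the paper flags this explicitly. Thus the ``$C^1_{\loc}$-vanishing'' you invoke does not control the contribution near the origin. The paper instead treats the whole segment $[0,K\varepsilon_{0,p}]$ in one stroke, using the $L^2_{\loc}(0,\infty)$ convergence of $\psi_{m,p}^0$ on $[1/K,K]$ together with the uniform smallness
\[
\int_0^{1/K} s\,V_p^0\,(\psi_{m,p}^0)^2\,ds \;\le\; \frac{C}{K^2}\int_0^{1/K} s^{-1}(\psi_{m,p}^0)^2\,ds \;\le\; \frac{C}{K^2},
\]
which follows from $s\le K^{-2}s^{-1}$ on $(0,1/K]$, the uniform bound on $V_p^0$ from \eqref{pot-lim-0}, and \eqref{norm1}. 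With this adjustment your contradiction argument goes through.
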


\subsection{Preliminary convergence results} %\edz{qui forse va detto che qualcosa di molto simile \`e gi\`a dimostrato in \cite[Lemma 3.7]{AG-N2} e anche in \cite[Lemmas 6.2, 6.3]{DIP-N2}}
We start showing that the eigenvalues  $\nu_j(p)$ and the rescaled eigenfunctions  $\psi^i_{j,p}$ are uniformly bounded in $p$.
\begin{lemma}\label{lemma:bound}
	There exists $C>0$ such that for every $p>1$ we have
	\begin{equation}\label{A1}
	-C\leq\nu_1(p)<\nu_2(p)<\ldots<\nu_m(p)<0
	\end{equation}
	\begin{equation}\label{A2}
	\int_0^{\infty}r((\psi^i_{j,p})')^2dr\leq C
	\end{equation}
	for every $i=0,\ldots,m-1$ and $j=1,\ldots,m$.
\end{lemma}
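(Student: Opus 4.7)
The key input for both statements is the uniform pointwise bound on $f_p$ recalled in Lemma \ref{lemma-stima-fp}, namely $f_p(r)=p\,r^2|u_p(r)|^{p-1}\le C$ for every $r\in[0,1]$ and every $p>1$, which rewrites as
\[
r V_p(r) \le \frac{C}{r} \qquad \text{for every } r\in (0,1],\ p>1.
\]
The plan is to feed this estimate first into the variational characterization \eqref{var-char-1} and then into the weak formulation of \eqref{S-L}.

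\emph{Lower bound on $\nu_1(p)$.} For any admissible $\psi\in\mathcal H_{0,\mathrm{rad}}$ with $\psi\not\equiv 0$, writing $r V_p\psi^2 = (f_p/r)\psi^2$ one gets
\[
\int_0^1 r V_p\,\psi^2\,dr \le C \int_0^1 \frac{\psi^2}{r}\,dr,
\]
so that the Rayleigh quotient appearing in \eqref{var-char-1} satisfies
\[
\frac{\int_0^1 r\bigl(|\psi'|^2-V_p\psi^2\bigr)\,dr}{\int_0^1 r^{-1}\psi^2\,dr}
\;\ge\; \frac{-C\int_0^1 r^{-1}\psi^2\,dr}{\int_0^1 r^{-1}\psi^2\,dr}\;=\;-C.
\]
Minimizing over $\psi$ yields $\nu_1(p)\ge -C$ uniformly in $p>1$. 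Combined with the already known strict ordering \eqref{est-rse}, this gives \eqref{A1}.

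\emph{Uniform bound on the rescaled Dirichlet energies.} Since each $\psi_{j,p}$ weakly solves \eqref{S-L} and belongs to $\mathcal H_{0,\mathrm{rad}}$, testing against $\psi_{j,p}$ itself (which is legitimate because both $\psi_{j,p}$ and $\psi_{j,p}/|x|$ are $L^2$, and $rV_p\psi_{j,p}^2$ is integrable by the bound above) gives
\[
\int_0^1 r\,(\psi_{j,p}')^2\,dr
= \int_0^1 r V_p \,\psi_{j,p}^2\,dr + \nu_j(p)\int_0^1 r^{-1}\psi_{j,p}^2\,dr.
\]
Using $\int_0^1 r^{-1}\psi_{j,p}^2\,dr=1$ by the normalization \eqref{normalization}, the pointwise estimate on $rV_p$, and the uniform bound $|\nu_j(p)|\le C$ from \eqref{A1} and \eqref{est-rse}, the right-hand side is bounded by a constant independent of $p$. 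Finally, \eqref{norm2} transfers the control to the rescaled eigenfunctions:
\[
\int_0^{\infty} r\,\bigl((\psi^i_{j,p})'\bigr)^2\,dr \;\le\; \int_0^1 r\,(\psi_{j,p}')^2\,dr \;\le\; C,
\]
for every $i=0,\dots,m-1$ and $j=1,\dots,m$, which is \eqref{A2}.

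\emph{Main difficulty.} The proof is essentially routine once Lemma \ref{lemma-stima-fp} is in hand; the only delicate point is justifying the testing of the singular equation \eqref{S-L} against the eigenfunction itself and the integrability of $rV_p\psi_{j,p}^2$ near the origin, which both follow from the inequality $rV_p\le C/r$ together with $\psi_{j,p}\in\mathcal L$.
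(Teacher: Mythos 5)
Your proof is correct and follows essentially the same route as the paper's: both rely on the uniform bound $f_p\le C$ from Lemma \ref{lemma-stima-fp}, the normalization \eqref{normalization}, the ordering \eqref{est-rse}, and the inequality \eqref{norm2} to transfer the energy bound to the rescaled functions. The only cosmetic difference is that for the lower bound on $\nu_1(p)$ you argue directly at the level of the Rayleigh quotient in \eqref{var-char-1}, whereas the paper extracts $\nu_1(p)$ from the tested equation \eqref{S-L}; these are equivalent, and your remark on the legitimacy of testing the singular equation against the eigenfunction is accurate but tacit in the paper.
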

\begin{proof}
	Using $\psi_{j,p}$ as a test function in \eqref{S-L} we get
	\begin{equation}\label{appoj}
	\int_0^1 r(\psi'_{j,p})^2dr=\int_0^1r(V_p+\frac{\nu_j(p)}{r^2})(\psi_{j,p})^2dr.
	\end{equation}
	For $j=1$, by virtue of \eqref{normalization} we can extract $\nu_1(p)$ getting that
	\[
	\nu_1(p)=\int_0^1r((\psi'_{1,p})^2-p|u_p|^{p-1}(\psi_{1,p})^2)dr\geq-\sup_{(0,1)}f_p(r)\int_0^1 r^{-1}(\psi_{1,p})^2dr=-C
	\]
	thanks to Lemma \ref{lemma-stima-fp} and \eqref{norm1}.\\
	Besides, since $\nu_j(p)<0$ for $j=1,\ldots,m$ by \eqref{est-rse}, \eqref{appoj}, \eqref{norm1} and Lemma  \ref{lemma-stima-fp}
	\[
	\int_0^1 r(\psi_{j,p}')^2dr<\int_0^1r^{-1}f_p(\psi_{j,p})^2dr\leq\sup_{r\in(0,1)}f_p(r)\int_0^1 r^{-1}(\psi_{j,p})^2dr=C.
	\]
	So also \eqref{A2} is proved, recalling \eqref{norm2}.
\end{proof}

As a consequence we can thus prove:
\begin{proposition}\label{lem:conv-autof-resc}
	Let $j=1,\ldots,m$.
	%and $p_n\to+\infty$. Then there exist a subsequence (that we still denote by $p_n$),
	Then there exist a sequence $p_{n}\rightarrow+\infty$, a number $\bar\nu_j\leq0$ and  $m$ functions $\overline\psi^i_j$, for $i=0,\ldots,m-1$,  such that as $n\rightarrow +\infty$
	\begin{align} \label{lim-autov}
	\nu_j(p_{n})\to\bar\nu_j & \\
	\label{lim-autof-risc}
	\psi_{j,p_{n}}^i
	\to\overline\psi^i_j & \quad\text{weakly in $\mathcal D_{\mathrm{rad}}$ and strongly in $L^2_{\loc}(0,\infty)$}.
	\end{align}
	Moreover $\overline\psi^i_j$  is a weak solution to 
	\eqref{S-L-lim} with eigenvalue $\beta=\bar\nu_{j}$.	
\end{proposition}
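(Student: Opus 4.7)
The plan is a standard compactness argument: the uniform bounds of Lemma \ref{lemma:bound} yield weak limits along a subsequence, and the locally uniform convergence of the potentials $V^i_p\to e^{Z_i}$ from \eqref{pot-lim-0}--\eqref{pot-lim-i} allows one to pass to the limit in the weak form of the rescaled equation \eqref{S-L-resc}.

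First I would extract the limit of the eigenvalues. By \eqref{A1} each sequence $\{\nu_j(p)\}_p$ is contained in $[-C,0)$; a diagonal extraction gives a single subsequence $p_n\to +\infty$ along which $\nu_j(p_n)\to \bar\nu_j$ simultaneously for all $j=1,\dots,m$, and \eqref{est-rse} gives $\bar\nu_j\le 0$. Next, combining \eqref{norm1} with \eqref{A2} yields
\[
\int_0^\infty\!\left(r\,((\psi^i_{j,p})')^2 + r^{-1}(\psi^i_{j,p})^2\right)dr \le C
\]
uniformly in $p$ for every pair $(i,j)$, so $\{\psi^i_{j,p}\}_p$ is bounded in $\mathcal D_{\mathrm{rad}}$. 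A further diagonal extraction then produces a weak limit $\overline\psi^i_j\in \mathcal D_{\mathrm{rad}}$; on every compact $K\Subset(0,\infty)$ the weights $r$ and $r^{-1}$ are bounded both above and below, so the $\mathcal D_{\mathrm{rad}}$--norm controls the $H^1(K)$--norm and the Rellich--Kondrachov theorem upgrades the convergence to $\psi^i_{j,p_n}\to \overline\psi^i_j$ strongly in $L^2_{\mathrm{loc}}(0,\infty)$.

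It remains to pass to the limit in the equation. Pick $\varphi\in C_c^\infty(0,\infty)$. By \eqref{zeri-lim}, the rescaled domain on which \eqref{S-L-resc} holds eventually contains $\mathrm{supp}\,\varphi$, so
\[
\int_0^\infty r\,(\psi^i_{j,p_n})'\varphi'\,dr = \int_0^\infty r\!\left(V^i_{p_n} + \frac{\nu_j(p_n)}{r^2}\right)\psi^i_{j,p_n}\varphi\,dr.
\]
The left-hand side passes to the limit by weak convergence in $\mathcal D_{\mathrm{rad}}$. On the right, the locally uniform convergence $V^i_{p_n}\to e^{Z_i}$ from \eqref{pot-lim-0}--\eqref{pot-lim-i}, together with the strong $L^2$ convergence of $\psi^i_{j,p_n}$ on $\mathrm{supp}\,\varphi$ and the numerical limit $\nu_j(p_n)\to \bar\nu_j$, gives
\[
\int_0^\infty r\,(\overline\psi^i_j)'\varphi'\,dr = \int_0^\infty r\!\left(e^{Z_i} + \frac{\bar\nu_j}{r^2}\right)\overline\psi^i_j\,\varphi\,dr,
\]
which is the weak formulation of \eqref{S-L-lim} with $\beta=\bar\nu_j$.

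The main obstacle is not in this proposition itself, whose proof is essentially routine once Lemma \ref{lemma:bound} is available, but in its subsequent use: a priori nothing prevents $\overline\psi^i_j\equiv 0$, in which case no information about $\bar\nu_j$ is recovered through Lemma \ref{lemma:problemaLimiteAutovalori}. The real work of Propositions \ref{prop:BaseInduttiva}--\ref{prop:autovAutofUltimo} is precisely to identify, for each $j$, the unique scale $i=m-j$ at which the limit is nontrivial (which then forces $\bar\nu_j=\beta^{m-j}$), while proving vanishing at all other scales; this requires exploiting the orthonormality relations \eqref{normalization} against the tower-of-bubbles structure encoded in Lemma \ref{teo:asympt}.
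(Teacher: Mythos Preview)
Your proof is correct and follows essentially the same compactness-and-pass-to-the-limit argument as the paper: extract a convergent subsequence of eigenvalues from \eqref{A1}, use \eqref{norm1}--\eqref{A2} to get weak $\mathcal D_{\mathrm{rad}}$ and strong $L^2_{\loc}$ limits of the rescaled eigenfunctions, and then pass to the limit in the weak form of \eqref{S-L-resc} using \eqref{zeri-lim} and \eqref{pot-lim-0}--\eqref{pot-lim-i}. Your explicit justification of the strong $L^2_{\loc}$ convergence via Rellich--Kondrachov and your closing remark on the role of Propositions \ref{prop:BaseInduttiva}--\ref{prop:autovAutofUltimo} are welcome additions, but the argument itself matches the paper's.
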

\begin{proof}
	By \eqref{A1} we can extract a sequence $p_{n}\rightarrow +\infty$ such that $\nu_{j}(p_n)\to\bar\nu_j\leq0$. \eqref{norm1} and \eqref{A2} imply that the sequence $(\psi_{j,p_n}^i)_{n}$ is uniformly bounded in $\mathcal{D}_{\mathrm{rad}}$ hence, up to another subsequence (that we still denote by $p_{n}$), one has that $
	\psi_{j,p_n}^i\to\overline\psi^i_j$ weakly in $\mathcal{D}_{\mathrm{rad}}$, strongly in $L^2_{\loc}(0,\infty)$ and almost everywhere in $(0,\infty)$.
	In particular $\overline\psi^i_j\in\mathcal D_{\mathrm{rad}}$. Since by {\eqref{zeri-lim}}  the intervals 
	\begin{equation*}%\label{tildeAip}
	 I^i_{p}:=\left\{
\begin{array}{ll}
(0,\frac{r_{1,p}}{\varepsilon_{0,p}})&\text{if $i=0$}\\
(\frac{r_{i,p}}{\varepsilon_{i,p}},\frac{r_{{i+1},p}}{\varepsilon_{i,p}})&\text{if $i>0$}
\end{array}
	\right.
	\end{equation*} 
	invade $(0,\infty)$, as $p\to+\infty$, for every $\varphi\in C_0^\infty(0,\infty)$ we can choose $n$ so large in such a way that $\mathrm{supp}\varphi\subset I^i_{p_n}$ and $\psi_{j,p_n}^i$ verifies 

	\[
	\int_0^\infty r (\psi_{j,p_n}^i)'\varphi'dr=\int_0^\infty r  V^i_{p_n}\psi_{j,p_n}^i\varphi dr+\nu_j(p_n)\int_0^\infty r^{-1} (\psi_{j,p_n}^i)\varphi dr.
	\]
	The weak convergence in $\mathcal D_{\mathrm{rad}}$ then implies that 
	\[
	\int_0^\infty r  (\psi_{j,p_n}^i)'\varphi'dr\to\int_0^\infty r(\overline\psi_j^i)'\varphi'dr
	\]
	\[
	\int_0^\infty r  \psi_{j,p_n}^i\varphi dr\to\int_0^\infty r\overline\psi_j^i \varphi dr
	\]
	while the strong convergence in $L^2_{\loc}$ and the fact that $ V^i_{p_n}\to e^{Z_i}$ in $C^1_{\loc}(0,\infty)$ imply also that
	\[
	\int_0^\infty r {V}^i_{p_n}\psi_{j,p_n}^i\varphi dr\to\int_0^\infty r e^{Z_i}\overline\psi^i_j\varphi dr
	\]
	getting that $\overline\psi^i_j$ solves \eqref{S-L-lim} with $\beta=\bar\nu_j$ in the weak sense.
	
%	\edz{\AL spostato dopo}\taglia{	Besides $\overline\psi^i_j\in H^1(0,R)$ for any $R>0$ and by \cite[VIII.2]{Brezis} and equation \eqref{S-L-lim} $\overline\psi^i_j\in C^1(0,R)$.\\
%		If $r_2\geq r_1\geq R^{-1}>0$ we have
%		\[
%		|\psi_{j,p_n}^i(r_2)-\psi_{j,p_n}^i(r_1)|\leq \int_{r_1}^{r_2}|(\psi_{j,p_n}^i)'(t)|dt\overset{\eqref{A2}}{\leq} C\left(\int_{r_1}^{r_2}t^{-1}dt\right)^{\frac12}\leq CR^{\frac12}\sqrt{r_2-r_1}
%		\]
%		so the Ascoli Theorem ensures that (up to another subsequence) $\psi_{j,p_n}^i\to\overline\psi^i_j$ uniformly in any set of type $[-R,R]$. Next taking advantage from the equation \eqref{S-L-resc} it is easy to get a bound for $\psi_{j,p_n}^i$ in $C^2(R^{-1},R)$ which ensures that it actually converges in $C^1(R^{-1},R)$.}
\end{proof}

Thanks to Lemma \ref{lemma:problemaLimiteAutovalori}, we can deduce some crucial consequences of Proposition \ref{lem:conv-autof-resc}
\begin{corollary} \label{remark} 
Let $\bar \nu_j$ and $\overline\psi^i_j$ be as in Proposition \ref{lem:conv-autof-resc} and $\beta^{i}, \eta^{i}$  as in \eqref{S-L-lim-autov} and \eqref{S-L-lim-autof}. It holds
\begin{enumerate}
		\item[$(i)$] If $\bar \nu_j \neq \beta^i, 0$, then $\overline\psi^i_j\equiv 0$.
		\item[$(ii)$] If there exists $j\in\{1, \dots, m-1\}$ such that $\overline\psi^i_j \not\equiv 0$, then $\bar \nu_j= \beta^i$.\\ 
		Furthermore 
		\begin{eqnarray}\label{caratAutofunzNonnulla}
		&&\overline\psi^i_j= A_j \eta^i\mbox{ for some }A_j\neq 0, \ |A_j|\leq1
		\\
		&&\label{caratAutofunzNulla} \overline\psi^h_j \equiv 0 \mbox{ for every }h\neq i.
		\end{eqnarray}
\end{enumerate}
\end{corollary}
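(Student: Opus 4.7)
The plan is to exploit the fact that each $\overline\psi_j^i$ is, by Proposition \ref{lem:conv-autof-resc}, a weak solution in $\mathcal D_{\mathrm{rad}}$ of the limit problem \eqref{S-L-lim} with spectral parameter $\beta=\bar\nu_j$, and then to read off its properties from the classification of negative eigenvalues of \eqref{S-L-lim} supplied by Lemma \ref{lemma:problemaLimiteAutovalori}, together with the strict ordering \eqref{ordine-beta} and the a priori bounds \eqref{est-rse}.

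For part $(i)$, Proposition \ref{lem:conv-autof-resc} guarantees $\bar\nu_j\le 0$; the hypothesis $\bar\nu_j\neq 0$ then upgrades this to $\bar\nu_j<0$. If additionally $\bar\nu_j\neq \beta^i$, Lemma \ref{lemma:problemaLimiteAutovalori} tells us that $\bar\nu_j$ is not an eigenvalue of \eqref{S-L-lim}, so the only admissible weak solution is $\overline\psi_j^i\equiv 0$.

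For part $(ii)$, fix $j\in\{1,\ldots,m-1\}$ and assume $\overline\psi_j^i\not\equiv 0$. I would first rule out $\bar\nu_j=0$ by invoking the bound $\nu_j(p)<-1$ from \eqref{est-rse}, which is in force precisely because $j\le m-1$; passing to the limit gives $\bar\nu_j\le -1<0$. The contrapositive of part $(i)$ then forces $\bar\nu_j=\beta^i$, and Lemma \ref{lemma:problemaLimiteAutovalori} identifies $\overline\psi_j^i = A_j\,\eta^i$ for some $A_j\in\mathbb R$, with $A_j\neq 0$ since $\overline\psi_j^i\not\equiv 0$. To establish $|A_j|\le 1$ I would combine the uniform normalization \eqref{norm1} with Fatou's lemma: the strong $L^2_{\mathrm{loc}}$ convergence produced inside Proposition \ref{lem:conv-autof-resc} yields, up to a further subsequence, $\psi_{j,p_n}^i\to\overline\psi_j^i$ almost everywhere on $(0,\infty)$, so that
\[
\int_0^{\infty} r^{-1}(\overline\psi_j^i)^2\,dr \;\le\; \liminf_{n\to\infty}\int_0^{\infty} r^{-1}(\psi_{j,p_n}^i)^2\,dr \;\le\; 1.
\]
Substituting $\overline\psi_j^i=A_j\eta^i$ and using the normalization \eqref{eta-normalizziata} gives $A_j^2\le 1$.

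Finally, for \eqref{caratAutofunzNulla} I would argue by contradiction: if some $\overline\psi_j^h\not\equiv 0$ with $h\neq i$, the same reasoning applied to the index $h$ would yield $\bar\nu_j=\beta^h$, which combined with the already established $\bar\nu_j=\beta^i$ forces $\beta^h=\beta^i$, contradicting the strict ordering recorded in \eqref{ordine-beta}. The whole argument is essentially bookkeeping on top of Lemma \ref{lemma:problemaLimiteAutovalori} and the simplicity/ordering of the $\beta^i$; the only mildly delicate point is the bound $|A_j|\le 1$, but this is handled transparently by Fatou once one observes that strong $L^2_{\mathrm{loc}}$ convergence automatically upgrades to a.e.\ convergence along a subsequence.
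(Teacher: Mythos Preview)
Your proposal is correct and follows essentially the same route as the paper: both proofs derive $(i)$ directly from Proposition~\ref{lem:conv-autof-resc} and Lemma~\ref{lemma:problemaLimiteAutovalori}, use \eqref{est-rse} to exclude $\bar\nu_j=0$ for $j\le m-1$, identify $\overline\psi_j^i=A_j\eta^i$ via the simplicity of $\beta^i$, bound $|A_j|\le 1$ by Fatou's lemma combined with \eqref{norm1} and \eqref{eta-normalizziata}, and obtain \eqref{caratAutofunzNulla} from the strict ordering \eqref{ordine-beta}. The only cosmetic difference is that the paper already records a.e.\ convergence inside the proof of Proposition~\ref{lem:conv-autof-resc}, so no further subsequence extraction is needed there.
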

\begin{proof} $(i)$ is a direct consequence of  Proposition \ref{lem:conv-autof-resc} and Lemma \ref{lemma:problemaLimiteAutovalori}. Indeed $\bar\nu_{j}\leq 0$ is an eigenvalue of problem \eqref{S-L-lim} and the only strictly negative eigenvalue must be $\beta^i$.

The first assertion of $(ii)$ follows from $(i)$, observing also that, thanks to \eqref{est-rse}, $\bar\nu_{j}\le -1$ as $j=1, \dots, m-1$ (while $-1\le \bar \nu_m \le 0$). Then Lemma \ref{lemma:problemaLimiteAutovalori} implies  that $\overline\psi^{i}_{j}=A_{j}\eta^{i}$, for a certain $A_{j}\in\mathbb R$. As a consequence, by the convergence in \eqref{lim-autof-risc} and Fatou's Lemma, one deduces that
\[(A_{j})^{2}\overset{\eqref{eta-normalizziata}}{=}(A_{j})^{2}\int_{0}^{\infty}r^{-1} (\eta^i)^2=\int_{0}^{\infty}r^{-1} (\overline\psi_{j}^i)^2\leq \liminf_{p\rightarrow +\infty}\int_{0}^{\infty}r^{-1} (\psi_{j,p}^i)^2\overset{\eqref{norm1}}{\leq} 1,\]
which implies \eqref{caratAutofunzNonnulla}.
Finally $0\neq\bar\nu_{j}=\beta^{i}\neq\beta^{h}$, for $h\neq i$,  by \eqref{ordine-beta}, hence \eqref{caratAutofunzNulla}  follows from $(i)$.\end{proof}

The convergence in \eqref{lim-autof-risc} is actually stronger, as stated by the following Lemma.

\begin{lemma}\label{lemma:C1loc} 
	Using the same notation of Proposition \ref{lem:conv-autof-resc}, we have
	\begin{equation} 
	\label{lim-autof-risc-unif}
	\psi_{j,p_n}^i \to\overline\psi^i_j  \ \text{ strongly in }C^1_{\loc}(0,\infty) , 
	\end{equation}
as $n\rightarrow +\infty$, for  $j=1,\ldots,m$, $i=0,\ldots,m-1$.\\	
Furthermore, if  $\bar \nu_j\leq-25$, then   
	\begin{equation}\label{lim-autof-risc-0} 
	\psi_{j,p_n}^0   \to\overline\psi^0_j  \ \text{ in $C^1_{\loc}[0,\infty)$,}
	\end{equation}
as $n\rightarrow +\infty$, for  $j=1,\ldots,m$.\end{lemma}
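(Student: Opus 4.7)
The plan is to handle the two statements separately, as they require different techniques: the first is essentially standard 1D ODE regularity on compact subsets of the open half-line, while the second must confront the singular behavior at the origin.

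For the convergence in $C^1_{\loc}(0,\infty)$, I would fix an arbitrary $[a,b] \subset (0,\infty)$ and upgrade the weak/$L^2_{\loc}$ convergence given by Proposition \ref{lem:conv-autof-resc} via a bootstrap in the equation \eqref{S-L-resc}. On $[a,b]$ the coefficient $1/r^2$ is bounded, $V^i_{p_n}\to e^{Z_i}$ uniformly by \eqref{pot-lim-0}--\eqref{pot-lim-i}, and the weighted $H^1$ bound \eqref{A2} degenerates into an ordinary $H^1([a,b])$ bound, so 1D Sobolev embedding forces $\psi_{j,p_n}^i$ to be bounded in $C^{0,1/2}([a,b])$. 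Writing \eqref{S-L-resc} as $\psi'' = -\psi'/r - (V^i_{p_n}+\nu_j(p_n)/r^2)\psi$ and using these uniform $L^\infty$ bounds plus Lemma \ref{lemma:bound}, the right-hand side is uniformly bounded in $L^\infty([a,b])$, giving a uniform $W^{2,\infty}([a,b])$ bound. By Arzelà–Ascoli, $\psi_{j,p_n}^i$ is precompact in $C^1([a,b])$, and uniqueness of the limit (identified via Proposition \ref{lem:conv-autof-resc}) promotes subsequential convergence to convergence of the whole sequence.

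For the second statement I would first observe that $\bar\nu_j\leq -25<-1=\beta^0$, so $\bar\nu_j\neq\beta^0$, and Corollary \ref{remark}$(i)$ forces $\overline\psi^0_j\equiv 0$; the task is thus to prove $\psi^0_{j,p_n}\to 0$ and $(\psi^0_{j,p_n})'\to 0$ uniformly on $[0,\delta]$ for small $\delta$. Setting $\alpha_n:=\sqrt{-\nu_j(p_n)}\geq 5-o(1)$, the crucial observation is a \emph{no-extremum principle} near the origin: at any interior point $r_\ast\in(0,\delta)$ where $\psi_n:=\psi^0_{j,p_n}$ has a local maximum with $\psi_n(r_\ast)>0$ (resp.\ local min with $\psi_n(r_\ast)<0$), the equation forces $V^0_{p_n}(r_\ast)+\nu_j(p_n)/r_\ast^2\geq 0$, i.e.\ $r_\ast^2\geq \alpha_n^2/\|V^0_{p_n}\|_{L^\infty[0,\delta]}$. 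Because $V^0_{p_n}\to e^{Z_0}$ with $Z_0(0)=0$, the denominator is bounded by some $M$ independent of $n$ for $\delta$ small, so choosing $\delta<5/\sqrt{M}$ rules out such extrema. Since $\psi_n(0)=0$ (by the Frobenius behavior $\psi_n(r)\sim c_n r^{\alpha_n}$ at $r=0$ forced by membership in $\mathcal L$), the maximum principle argument yields $\sup_{[0,\delta]}|\psi_n| = |\psi_n(\delta)|$, which by Part 1 tends to $|\overline\psi^0_j(\delta)|=0$.

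For the derivative, I would integrate \eqref{S-L-resc} on $(0,r)$: using the Frobenius asymptotics to justify $\lim_{s\to 0^+}s\psi_n'(s)=0$ (since $\alpha_n\geq 5$), one obtains $r\psi_n'(r)=-\int_0^r s(V^0_{p_n}+\nu_j(p_n)/s^2)\psi_n(s)\,ds$, and the uniform pointwise bound $|\psi_n(s)|\leq C_n s^{\alpha_n}$ with $C_n\to 0$ (coming from the previous step and the Frobenius representation) yields $|\psi_n'(r)|\to 0$ uniformly on $[0,\delta]$. Combined with Part 1 on $[\delta,R]$, this gives the claimed $C^1_{\loc}[0,\infty)$ convergence.

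The main obstacle is the second part: the singular term $\nu_j(p)/r^2$ precludes direct application of elliptic regularity up to the origin, and one cannot simply derive $L^\infty$ bounds from the weighted norms. The condition $\bar\nu_j\leq -25$ is exactly what makes the Frobenius exponent $\alpha_n\geq 5$ large enough to guarantee both (i) that the singular term dominates the smooth potential in a fixed neighborhood of $0$ (yielding the no-extremum principle) and (ii) enough algebraic decay of $\psi_n$ and $\psi_n'$ at the origin to transfer the boundary convergence into uniform convergence. Without this gap between $\beta^0$ and $\beta^1$, the argument would break down at $r=0$.
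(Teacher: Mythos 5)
Your treatment of the interior convergence \eqref{lim-autof-risc-unif} coincides with the paper's: the weighted bound \eqref{A2} gives Hölder-$\tfrac12$ equicontinuity on compacta away from the origin, and a bootstrap in \eqref{S-L-resc} upgrades this to $C^1$ compactness. That part is fine.

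For the $C^1_{\loc}[0,\infty)$ convergence you take a genuinely different route from the paper, and there is a gap in it. Your no-extremum principle (ruling out interior maxima of $\psi_n>0$ on $(0,\delta)$ because $V^0_{p_n}(r_*)+\nu_j(p_n)/r_*^2\ge 0$ forces $r_*^2\ge|\nu_j(p_n)|/\|V^0_{p_n}\|_\infty\ge 16$ for $n$ large) is correct and does give $\sup_{[0,\delta]}|\psi^0_{j,p_n}|=|\psi^0_{j,p_n}(\delta)|\to|\overline\psi^0_j(\delta)|=0$. But that is only a \emph{constant} (in $r$) bound of size $o_n(1)$. When you then integrate \eqref{S-L-resc} to estimate $r\psi_n'(r)=-\int_0^r s\big(V^0_{p_n}+\nu_j(p_n)/s^2\big)\psi_n\,ds$, the singular part requires controlling $\int_0^r s^{-1}|\psi_n(s)|\,ds$, and with only $|\psi_n(s)|\le o_n(1)$ this integral diverges. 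You assert a pointwise decay $|\psi_n(s)|\le C_n s^{\alpha_n}$ with $C_n\to 0$ ``from the previous step and the Frobenius representation,'' but neither delivers it: Frobenius gives $\psi_n(s)\sim c_n s^{\alpha_n}$ as $s\to 0$ for each fixed $n$ with no uniform control of $c_n$, and the boundary-maximum argument only controls the sup. A comparison with the Euler solution $A\,s^{\alpha_n}$ also fails here, since the operator $\partial^2+s^{-1}\partial+\nu s^{-2}$ has nonpositive zeroth-order coefficient, so the standard maximum principle gives $\psi_n\ge A\,s^{\alpha_n}$ (the wrong direction) rather than an upper bound.

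This is precisely the point where the paper does something different and essential. It writes the solution via the Green's-function-type representation
\[
\psi_{j,p_n}(\rho)=\rho^{\kappa_{j,p_n}}\int_{\rho}^1 s^{-1-2\kappa_{j,p_n}}\int_0^s t^{1+\kappa_{j,p_n}}V_{p_n}(t)\psi_{j,p_n}(t)\,dt\,ds,\qquad \kappa_{j,p_n}=\sqrt{|\nu_j(p_n)|}>4,
\]
and an application of H\"older combined with the normalization $\int_0^1 r^{-1}\psi_{j,p_n}^2\,dr=1$ and $\|V_{p_n}\|_\infty\le\varepsilon_{0,p_n}^{-2}$ yields the crucial quantitative decay $|\psi_{j,p_n}(\rho)|\le\varepsilon_{0,p_n}^{-2}\rho^2$, which after rescaling becomes $|\psi^0_{j,p_n}(r)|\le r^2$. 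This quadratic decay makes $\int_0^r s^{-1}|\psi_n(s)|\,ds$ convergent and, together with the subsequent bounds on $(\psi^0_{j,p_n})'$ and $(\psi^0_{j,p_n})''$, gives equicontinuity of $(\psi^0_{j,p_n})'$ up to $r=0$ and hence the $C^1_{\loc}[0,\infty)$ convergence by Arzel\`a--Ascoli. If you want to salvage your approach you must supply an analogue of this uniform-in-$n$ pointwise decay near the origin; neither the no-extremum principle nor the qualitative Frobenius behavior gives it for free.
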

\begin{proof}
	Recall tha $	\psi_{j,p_n}\in\mathcal{H}_{0,\mathrm{rad}}\subset C^0(0,1]$ and  $\psi_{j,p_n}$ is a solution to \eqref{S-L} (with $ V_{p_n} \in C^\infty[0,1]$), so  $\psi_{j,p_n}\in C^1(0,1]$ and in turn via a bootstrap argument  $\psi_{j,p_n}\in C^\infty(0,1]$.
	If $r_2\geq r_1\geq R^{-1}>0$ we have
	\[
	|\psi_{j,p_n}^i(r_2)-\psi_{j,p_n}^i(r_1)|\leq \int_{r_1}^{r_2}|(\psi_{j,p_n}^i)'(t)|dt\overset{\eqref{A2}}{\leq} C\left(\int_{r_1}^{r_2}t^{-1}dt\right)^{\frac12}\leq CR^{\frac12}\sqrt{r_2-r_1}
	\]
	so (up to another subsequence) $\psi_{j,p_n}^i\to\overline\psi^i_j$ uniformly in any set of type $[R^{-1},R]$ by the Arzelà-Ascoli Theorem. Furthermore,  by  equation \eqref{S-L-resc}, it is easy to derive a  bound for $\psi_{j,p_n}^i$ in $C^2(R^{-1},R)$, which ensures the   convergence in $C^1(R^{-1},R)$, completing the proof of \eqref{lim-autof-risc-unif}.\\
	Next we derive \eqref{lim-autof-risc-0}.

%	Next we check that there exist $\bar n>0$ and $C>0$, depending only on $R$, such that for any $r\in[0,R]$ and for any $n\geq\bar n$:
%	\begin{equation}\label{stima1}
%	|\psi_{j,p_n}^0(r)|\leq \,r^2
%	\end{equation}
%	\begin{equation}\label{stima2}
%	|(\psi_{j,p_n}^0)'(r)|\leq\, C r .
%	\end{equation}
%	%and $\psi_{j,p_n}^0\in C^1[0,R]\cap C^\infty(0,R]$, with $(\psi_{j,p_n}^0)'$ uniformly equicontinuous in $[0,R]$.
	
	Reasoning as in \cite[Lemma 2.4]{GGN16} or \cite[Proposition 2.2]{GI20} and integrating the equation \eqref{S-L} one has 
	\begin{equation}
	\label{AF2}
	\psi_{j,p_n}(\rho)=\rho^{\kappa_{j,p_n}}\int_{\rho}^1 s^{-1-2\kappa_{j,p_n}}\int_0^s t^{1+\kappa_{j,p_n}}V_{p_n}(t)\psi_{j,p_n}(t)\,dt ds
	\end{equation}
	where $\kappa_{j,p_n}=\sqrt{|\nu_j(p_n)|}>4$ by assumption. Observe that 
	\begin{align*}
	\left|\int_0^s t^{1+\kappa_{j,p_n}}V_{p_n}(t)\psi_{j,p_n}(t)\,dt\right|&\leq \| V_{p_n}\|_{\infty}\left|\int_0^s t^{-\tfrac12}\psi_{j,p_n}(t) t^{\kappa_{j,p_n}+\tfrac32}dt\right|\\		
	&\overset{\text{H\"older}}{\leq}\| V_{p_n}\|_{\infty}\left(\int_0^1\frac{(\psi_{j,p_n}(t))^2}{t}dt\right)^{\tfrac12}\left(\int_0^st^{3+2\kappa_{j,p_n}}dt\right)^{\tfrac12}\\
	&\overset{(\star)}{\leq}\varepsilon_{0,p}^{-2}\frac{s^{2+\kappa_{j,p_n}}}{\sqrt{4+2\kappa_{j,p_n}}}{\leq}\frac{\varepsilon_{0,p}^{-2}}{2}s^{2+\kappa_{j,p_n}},
	\end{align*}
where  $(\star) $ follows from  the normalization \eqref{norm1} and the fact  that $\| V_{p_n}\|_{\infty} \le \varepsilon_{0,p}^{-2}$ by  \eqref{V_p} and \eqref{epsilon}.
	Inserting this estimate in \eqref{AF2} we get
	\begin{equation}\label{AF3}
	|\psi_{j,p_n}(\rho)|\;\leq\; \frac{\varepsilon_{0,p}^{-2}}{2}\rho^{\kappa_{j,p_n}}\int_\rho^1 s^{1-\kappa_{j,p_n}}ds\;{\leq}\;\frac{\varepsilon_{0,p}^{-2}}{2}\rho^{\kappa_{j,p_n}}
	\frac{1-\rho^{2-\kappa_{j,p_n}}}{2-\kappa_{j,p_n}}\overset{ \kappa_{j,p_n}>4} {\leq} \varepsilon_{0,p}^{-2}\rho^2. 
	\end{equation}
	This implies that $\psi_{j,p_n}$ is continuous and differentiable in $\rho=0$ with $\psi_{j,p_n}(0)=(\psi_{j,p_n})'(0)=0$. Then we can integrate \eqref{S-L} in $(0,\rho)$ getting
	\[
	\rho(\psi_{j,p_n})'(\rho)=-\int_0^\rho\left(s V_{p_n}(s)+\frac{\nu_{j}(p_n)}{s}\right)\psi_{j,p_n}(s)\,ds.
	\]
	Combining with \eqref{AF3} we derive 
	\begin{eqnarray}\label{AF4}
	|(\psi_{j,p_n})'(\rho)|&\leq&\frac{\varepsilon_{0,p}^{-2}}{\rho}\int_0^\rho \left(s \|V_{p_n}\|_\infty+\frac{|\nu_{j}(p_n)|}{s}\right)s^2\,ds\nonumber\\
	&\overset{(*)}\leq& \frac{\varepsilon_{0,p}^{-2}}{\rho} \left(\varepsilon_{0,p}^{-2}\frac{\rho^4}{4}+C\frac{\rho^2}{2}\right)\nonumber\\
	&\leq& \varepsilon_{0,p}^{-2}\rho \left(\varepsilon_{0,p}^{-2}\rho^2+C\right),
	\end{eqnarray}
	where in $(*)$ we have used \eqref{V_p}, the fact that $\|V_{p_{n}}\|_{\infty}=p u_{p_{n}}(0)^{p-1}$ (since $\|u_{p}\|_{\infty}=u_{p}(0)$, cfr. \cite{{IS-arxiv}}), \eqref{epsilon} and 
	\eqref{A1}. 
	This implies that $\psi_{j,p_n}\in C^1[0,1]$. Furthermore by \eqref{S-L}
	\[
	-\psi_{j,p_n}''(\rho)=\frac{\psi_{j,p_n}'(\rho)}{\rho}+\left(\rho^2 V_{p_n}(\rho)+\nu_{j}(p_n)\right)\frac{\psi_{j,p_n}(\rho)}{\rho^2},\quad \mbox{ for }\rho\in(0,1],
	\]
	so using \eqref{AF3}, \eqref{AF4} and \eqref{A1}
	\begin{equation}
	\label{AF5}
	|\psi_{j,p_n}''(\rho)|\leq2\varepsilon_{0,p_n}^{-2}\left(\varepsilon_{0,p_n}^{-2}\rho^2+\widetilde C\right)\qquad\text{for }\rho\in(0,1].
	\end{equation}
	By \eqref{zeri-lim} for any $R>0$ there exists $n$ large enough such that  $R<\frac{r_{1,p_n}}{\varepsilon_{0,p_n}}$. Recalling the definition of the rescaled function \eqref{autof-resc}, by the regularity of $\psi_{j,p_n}$, we conclude that $\psi_{j,p_n}^0\in C^1[0,R]\cap C^\infty(0,R]$. Scaling into the estimates \eqref{AF3},  \eqref{AF4}, \eqref{AF5}  we obtain 
	that for $r\in[0,R]$:
	\[
	|\psi^0_{j,p_n}(r)|=|\psi_{j,p_n}(\e_{0,p_n} r)|\overset{\eqref{AF3}}{\leq}r^2,
	\]
	\[
	|(\psi^0_{j,p_n})'(r)|=\e_{0,p}|\psi_{j,p_n}'(\e_{0,p_n} r)|\overset{\eqref{AF4}}{\leq}(r^2+C)r\,{\leq}\,C_{R}r,
	\]	
	\[
	|(\psi^0_{j,p_n})''(r)|=\e_{0,p_n}^2|\psi_{j,p_n}''(\e_{0,p_n} r)|\overset{\eqref{AF5}}{\leq}2(r^2+\widetilde C)\,{\leq}\,\widetilde{C}_{R} \quad ,\text{ for }r\in(0,R]
	\]
thus $(\psi^0_{j,p_n})'$ are equicontinuous in $[0,R]$ and  Arzelà-Ascoli Theorem implies \eqref{lim-autof-risc-0}.		
\end{proof} 

The locally uniform convergence established in Lemma \ref{lemma:C1loc}  will be crucial to control the interactions among different scalings of the eigenfunction $\psi_{j,p}$. Adapting the proof of \cite[Lemma 3.7]{AG_N3}, we infer that 

\begin{lemma}\label{Lemma:NormaL2pesataPiccolaSuInsiemini} If $\nu_{j}(p)<-\frac{1}{2}$, then for any $\delta>0$ there exist $K(\delta)>1$ and $p(\delta,K)>1$ such that
	\[\int_{G_{p}(K)}\frac{(\psi_{j,p})^{2}}{r}dr \leq \delta, \qquad \mbox{ for } K\geq K(\delta) \mbox{ and }p\geq p(\delta,K) .\]
	Here $G_p(K)$ is the set defined in \eqref{def:G}.
\end{lemma}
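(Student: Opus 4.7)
The plan is to derive an energy inequality on each connected component of $G_p(K)$ that bounds $\int_{G_p(K)} r^{-1}\psi_{j,p}^2\,dr$ by boundary terms, and then show that these boundary terms vanish by exploiting the polynomial decay of the limit profiles $\eta^i$ at $0$ and $\infty$.

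Testing \eqref{S-L} against $\psi_{j,p}$ on a typical sub-interval $I_i=[K\varepsilon_{i,p},\varepsilon_{i+1,p}/K]$ for $i=0,\dots,m-2$, and on $I_{m-1}=[K\varepsilon_{m-1,p},1]$, and integrating by parts (using $rV_p=r^{-1}f_p$ with $f_p$ as in \eqref{def:f}) would yield
\begin{equation*}
\int_{I_i} r(\psi_{j,p}')^2\,dr + |\nu_j(p)|\int_{I_i} r^{-1}\psi_{j,p}^2\,dr = \bigl[r\psi_{j,p}'\psi_{j,p}\bigr]_{I_i} + \int_{I_i} r^{-1}f_p\psi_{j,p}^2\,dr.
\end{equation*}
Lemma \ref{lemma-stima-fp} ensures $f_p\le 1/4$ on $G_p(K)$ for $K,p$ large enough, and together with the hypothesis $|\nu_j(p)|>1/2$ this produces
\begin{equation*}
\int_{G_p(K)} r^{-1}\psi_{j,p}^2\,dr\;\le\;4\sum_{i=0}^{m-1}\bigl|\bigl[r\psi_{j,p}'\psi_{j,p}\bigr]_{I_i}\bigr|.
\end{equation*}
The Dirichlet condition $\psi_{j,p}(1)=0$ annihilates the contribution at $r=1$, so rewriting the remaining values via the rescaling \eqref{autof-resc} reduces the right-hand side to the $2m-1$ endpoint products $K\psi_{j,p}^i(K)(\psi_{j,p}^i)'(K)$ for $i=0,\dots,m-1$ and $K^{-1}\psi_{j,p}^i(K^{-1})(\psi_{j,p}^i)'(K^{-1})$ for $i=1,\dots,m-1$.

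Each such product can be made small by taking first $K$, then $p$, large. Indeed, for any fixed $K$, a sequence $p_n\to\infty$ admits, by Proposition \ref{lem:conv-autof-resc} and Lemma \ref{lemma:C1loc}, a subsequence along which $\psi_{j,p_n}^i\to\bar\psi_j^i$ in $C^1_\loc(0,\infty)$; since the limit $\bar\nu_j\le -1/2<0$, Corollary \ref{remark} forces $\bar\psi_j^i$ to be either identically zero or $A_j\eta^i$ with $|A_j|\le 1$, whence $|\bar\psi_j^i(K)|\le|\eta^i(K)|$ and likewise for the derivative. The explicit form \eqref{S-L-lim-autof} of $\eta^i$ gives $\eta^i(r)\sim r^{-\theta_i/2}$ as $r\to\infty$ and $\eta^i(r)\sim r^{\theta_i/2}$ as $r\to 0^+$, so that both $K|\eta^i(K)(\eta^i)'(K)|$ and $K^{-1}|\eta^i(K^{-1})(\eta^i)'(K^{-1})|$ are $O(K^{-\theta_i})\to 0$ as $K\to\infty$. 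Choosing first $K$ large enough that all these $\eta^i$-type quantities are at most $\delta/(16m)$, and then $p$ large enough that each rescaled endpoint product lies within $\delta/(16m)$ of its subsequential limit, yields $\int_{G_p(K)} r^{-1}\psi_{j,p}^2\,dr\le \delta$.

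The principal subtlety is that at this point of the paper the limits $\bar\psi_j^i$ are identified only along subsequences, so it is essential that the dichotomy in Corollary \ref{remark} yields a subsequence-independent pointwise majorant $|\bar\psi_j^i|\le|\eta^i|$; this is precisely what allows the two limits $p\to\infty$ and $K\to\infty$ to be performed in the correct order without any dependence on the chosen subsequence.
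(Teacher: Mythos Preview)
Your proof is correct and follows essentially the same strategy as the paper's: both test the equation on each component of $G_p(K)$, integrate by parts, control the $f_p$ term via Lemma~\ref{lemma-stima-fp} and the hypothesis $|\nu_j(p)|>1/2$, and then bound the boundary terms by rescaling and invoking the $C^1_{\loc}$ convergence of Lemma~\ref{lemma:C1loc} together with the dichotomy of Corollary~\ref{remark}. Your explicit remark that the pointwise majorant $|\bar\psi_j^i|\le|\eta^i|$ (and likewise for the derivative) is subsequence-independent is a useful clarification that the paper leaves implicit.
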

\begin{proof} 
	By definition $G_{p}(K)=\bigcup_{i=0}^{m-1}[a_i,b_i]$ 
	where we set
	\begin{eqnarray*}
		&&a_i:=K\varepsilon_{i,p}\qquad i=0,\ldots, m-1\\
		&& b_i:=\left\{\begin{array}{lr}\frac{1}{K}\varepsilon_{i+1,p} &\quad i=0,\ldots, m-2\\
			1 &\quad i=m-1\end{array} \right.
	\end{eqnarray*}
	Thanks to Lemma \ref{lemma-stima-fp} and the definition \eqref{S-L-lim-autof}, one can chose $ K(\delta)$ and $p_1(\delta)$ such that
	\begin{equation}\label{scelta-delta}
	\max_{G_{p}(K)}f_{p} \le \frac{\delta}{4m} , \quad |K\eta^{i}(K)(\eta^{i})'(K)| \le \frac{\delta}{16m} , \quad |\frac1{K}\eta^{i}(\frac{1}K)(\eta^{i})'(\frac{1}K)| \le \frac{\delta}{16m} 
	\end{equation}
	for every $K>K(\delta)$, $p\ge  p_1(\delta)$, $i=0,\dots m-1$.
	
	Using $\psi_{j,p}$ as a test function in \eqref{S-L} and recalling the definition of $f_p$ in \eqref{def:f}	we get
	\begin{equation}\label{pezzi}
	\int_{a_i}^{b_i}\frac{(\psi_{j,p})^{2}}{r}dr=-\frac{1}{\nu_{j}(p)}\int_{a_i}^{b_i} (r \psi_{j,p}')'\psi_{j,p} dr - \frac{1}{\nu_{j}(p)}\int_{a_i}^{b_i}f_{p}(r)\frac{(\psi_{j,p})^{2}}{r}dr.\end{equation}
	Let us estimate the two integrals in the right hand side of \eqref{pezzi}. Concerning the first one
	\begin{equation}
	\label{secondoPezzetto} 
	- \frac{1}{\nu_{j}(p)}\int_{a_i}^{b_i}f_{p}(r)\frac{(\psi_{j,p})^{2}}{r}dr  \le 2\max_{G_{p}(K)}f_{p}\int_{0}^{1}\frac{(\psi_{j,p})^{2}}{r}dr\overset{\eqref{normalization}}{=}2\max_{G_{p}(K)}f_{p} \leq \frac{\delta}{2m}
	\end{equation}
	for every $K\ge K(\delta)$ and $p\ge p_1(\delta)$, thanks to \eqref{scelta-delta}. \\
	Moreover integrating by parts 
	\begin{align}
	- \frac{1}{\nu_{j}(p)}\int_{a_i}^{b_i} (r \psi_{j,p}')'\psi_{j,p} dr &=
	-\frac{1}{\nu_{j}(p)}\left[-\int_{a_i}^{b_i} r (\psi_{j,p}')^{2} dr+b_i\psi_{j,p}(b_i)\psi_{j,p}'(b_i) -a_i\psi_{j,p}(a_i)\psi_{j,p}'(a_i)\right] \nonumber\\
	& \leq 2|b_i\psi_{j,p}(b_i)\psi_{j,p}'(b_i)|+2|a_i\psi_{j,p}(a_i)\psi_{j,p}'(a_i)| \label{PrimoPezzetto}
	\end{align}
	since $\nu_j(p)<-\frac{1}{2}$.
	Observe that
	\[2  b_{m-1}\psi_{j,p}(b_{m-1})\psi_{j,p}'( b_{m-1})=2\psi_{j,p}(1)\psi_{j,p}'(1)=0.\]
	The other terms can be estimated by making use of Lemma \ref{lemma:C1loc}.
	For $i=0,\ldots, m-2$, rescaling according to $\varepsilon_{i,p}$ gives
	\[
	2|b_i\psi_{j,p}(b_i)\psi_{j,p}'(b_i)|=2\frac{1}{K}|\psi_{j,p}^{i+1}(\frac{1}{K})(\psi_{j,p}^{i+1})'(\frac{1}{K})|\overset{\eqref{lim-autof-risc-unif}}{\leq}2\frac{1}{K}|{\overline\psi^{i+1}_{j}(\frac{1}{K}) (\overline\psi^{i+1}_{j})'(\frac{1}{K})}|+\frac{\delta}{8m}
	\]
	after chosing $p\geq p_{2}(\delta, K)$, for a suitable $p_2(\delta,K)$.
	Similarly for $i=0,\ldots, m-1$ 
	\[2|a_i\psi_{j,p}(a_i)\psi_{j,p}'(a_i)| =2K|\psi_{j,p}^{i}(K)(\psi_{j,p}^{i})'(K)|
	\le	2K|{\overline\psi^{i}_{j}(K) (\overline\psi^{i}_{j})'(K)}| +\frac{\delta}{8m}
	\] for $p\geq p_{2}(\delta,K)$. 
	Summing up, \eqref{PrimoPezzetto} becomes
	\begin{align*}
	- \frac{1}{\nu_{j}(p)}\int_{a_{m-1}}^{b_{m-1}} (r \psi_{j,p}')'\psi_{j,p} dr & \le 		2K|{\overline\psi^{m-1}_{j}(K) (\overline\psi^{m-1}_{j})'(K)}| 
	+\frac{\delta}{8m} ,
	\\
	- \frac{1}{\nu_{j}(p)}\int_{a_i}^{b_i} (r \psi_{j,p}')'\psi_{j,p} dr & 	\le 	2\frac{1}{K}|{\overline\psi^{i+1}_{j}(\frac{1}{K}) (\overline\psi^{i+1}_{j})'(\frac{1}{K})}|
	+ 	2K|{\overline\psi^{i}_{j}(K) (\overline\psi^{i}_{j})'(K)}| 
	+\frac{\delta}{4m} 
	\end{align*}
	if $i=0,\dots m-2$.
	We remark that, according to Corollary \ref{remark}-$(ii)$, at most one between the limit functions $\overline\psi^{i}_{j}$ and $\overline\psi^{i+1}_{j}$ differs from zero, and either $\overline\psi^{i}_{j}=A_j \eta^i$ or $\overline\psi^{i+1}_j=A_j \eta^{i+1}$, with $|A_j|\le 1$.  
	Therefore \eqref{scelta-delta} implies that for every $i=0,\dots m-1$
	\begin{equation}
	\label{altroPezzetto}
	- \frac{1}{\nu_{j}(p)}\int_{a_i}^{b_i} (r \psi_{j,p}')'\psi_{j,p} dr \leq \frac{\delta}{2m}\end{equation}
	for $K\ge K(\delta)$  and for every $p\geq p_{2}(\delta,K)$.
	\\
	Substituting the estimates \eqref{secondoPezzetto} and \eqref{altroPezzetto} into \eqref{pezzi} we deduce that
	\[\int_{a_i}^{b_i}\frac{(\psi_{j,p})^{2}}{r}dr\leq \frac{\delta}{m},\]
	for $K\ge K(\delta)$  and for every $p\geq \max\{{p_1(\delta)}\, , \, p_{2}(\delta, K)\}$. The conclusion follows summing up for $i=0,\ldots, m-1$.
\end{proof}

\

\

\subsection {Proof of Proposition \ref{prop:BaseInduttiva}}\label{sottoSectionproof}

Proposition \ref{prop:BaseInduttiva} follows by adapting the arguments in \cite[Proposition 3.4]{AG-N2}, which concernes the case of two nodal zones. For the reader's comprehension we report a detailed proof. First we obtain an estimate from above of $\nu_1(p)$ in Lemma \ref{lemma:stimaPrimoAutovalore}. Next we conclude the proof relying on the general convergence result in Proposition \ref{lem:conv-autof-resc} and in particular on Corollary \ref{remark} and Lemma \ref{lemma:C1loc}.

\begin{lemma}
	\label{lemma:stimaPrimoAutovalore} 
	\[\limsup\limits_{p\to \infty} \nu_1(p) \le \beta^{m-1}.\]
\end{lemma}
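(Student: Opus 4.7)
The strategy is to estimate $\nu_1(p)$ from above via the variational characterization \eqref{var-char-1} by plugging in a test function tailored to the most external bubble $Z_{m-1}$, whose associated limit eigenvalue is $\beta^{m-1}$ (the most negative value in \eqref{ordine-beta}). Concretely, I will rescale the limit eigenfunction $\eta^{m-1}$ at the scale $\varepsilon_{m-1,p}$ and truncate it, so that the Rayleigh quotient converges to $\beta^{m-1}$.

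Fix $T>1$ and let $\xi_T\in C_c^\infty([0,\infty))$ be a smooth cutoff with $\xi_T\equiv 1$ on $[0,T]$, $\xi_T\equiv 0$ on $[2T,\infty)$, and $|\xi_T'|\le C/T$. For $p$ large enough so that $2T\,\varepsilon_{m-1,p}<1$, define the admissible test function
\[
\psi_p(r):=\eta^{m-1}\!\left(\tfrac{r}{\varepsilon_{m-1,p}}\right)\xi_T\!\left(\tfrac{r}{\varepsilon_{m-1,p}}\right),\qquad r\in[0,1].
\]
Since $\eta^{m-1}(s)\sim s^{\theta_{m-1}/2}$ as $s\to 0$ and $\psi_p$ has compact support in $[0,1)$, one checks that $\psi_p\in\mathcal H_{0,\mathrm{rad}}$. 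Setting $f_T:=\eta^{m-1}\xi_T$ and changing variables $s=r/\varepsilon_{m-1,p}$, the rescaling (which cancels the factors $\varepsilon_{m-1,p}$ thanks to the homogeneity of both sides) gives
\[
\nu_1(p)\ \le\ \frac{\displaystyle\int_0^{2T} s\,(f_T')^2\,ds\ -\ \int_0^{2T} s\,V_p^{m-1}(s)\,f_T^2\,ds}{\displaystyle\int_0^{2T} s^{-1} f_T^2\,ds},
\]
with $V_p^{m-1}$ as in \eqref{pot-risc}.

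Next I pass to the limit $p\to\infty$ with $T$ fixed. By \eqref{pot-lim-i}, $V_p^{m-1}\to e^{Z_{m-1}}$ uniformly on $[\delta,2T]$ for every $\delta>0$. To control the contribution near $s=0$, observe that by \eqref{def:f} and \eqref{pot-risc} one has $s\,V_p^{m-1}(s)=f_p(\varepsilon_{m-1,p}s)/s$, and Lemma \ref{lemma-stima-fp} gives $f_p\le C$ uniformly in $p$; combined with $f_T(s)^2\lesssim s^{\theta_{m-1}}$ near $0$ (and $\theta_{m-1}>2$), this provides an integrable dominant, so dominated convergence yields
\[
\limsup_{p\to\infty}\nu_1(p)\ \le\ \frac{\displaystyle\int_0^{\infty} s\,(f_T')^2\,ds\ -\ \int_0^{\infty} s\, e^{Z_{m-1}} f_T^2\,ds}{\displaystyle\int_0^{\infty} s^{-1} f_T^2\,ds}.
\]
Finally I let $T\to\infty$: the asymptotic decay $\eta^{m-1}(s)\sim s^{-\theta_{m-1}/2}$ at infinity together with \eqref{useful2} ensures that $s((\eta^{m-1})')^2$, $s\,e^{Z_{m-1}}(\eta^{m-1})^2$ and $s^{-1}(\eta^{m-1})^2$ are all integrable on $(0,\infty)$, so a second application of dominated convergence (plus $|\xi_T'|\le C/T$ to kill the commutator term $\eta^{m-1}\xi_T'$) lets me replace $f_T$ by $\eta^{m-1}$. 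Testing the limit equation \eqref{S-L-lim} for $i=m-1$ with $\eta^{m-1}$ itself and using the normalization \eqref{eta-normalizziata}, the final quotient equals exactly $\beta^{m-1}$, which gives the claim.

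The main obstacle is the control near $s=0$: since $Z_{m-1}$ is singular at the origin for $m\ge 2$, the convergence $V_p^{m-1}\to e^{Z_{m-1}}$ is only locally uniform on $(0,\infty)$, not on $[0,\infty)$. The key point that makes this harmless is the matching: the vanishing rate $s^{\theta_{m-1}/2}$ of the cutoff eigenfunction at $0$ is precisely tuned to the singularity of the potential, and the uniform bound on $f_p$ from Lemma \ref{lemma-stima-fp} provides a clean, $p$-independent dominant. The secondary (routine) technicality is the $T\to\infty$ tail, which is handled by the explicit decay of $\eta^{m-1}$ granted by \eqref{useful2}.
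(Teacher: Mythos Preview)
Your proof is correct and follows the same overall strategy as the paper: insert into the variational characterization \eqref{var-char-1} a rescaled and truncated copy of the limiting eigenfunction $\eta^{m-1}$ at scale $\varepsilon_{m-1,p}$, and show that the resulting Rayleigh quotient tends to $\beta^{m-1}$. The difference is in the cutoff. The paper uses a \emph{two-sided} cutoff $\Phi$ supported on $[1/(2R),2R]$, so the test function stays away from $s=0$ and the only convergence needed is the uniform one $V_p^{m-1}\to e^{Z_{m-1}}$ on compact subsets of $(0,\infty)$; the equation \eqref{S-L-lim} is then used directly in an integration by parts, and the small remainders come only from $\eta$ being small at $1/R$ and $R$. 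You instead use a \emph{one-sided} cutoff $\xi_T$ equal to $1$ near $s=0$, which forces you to control the singular behavior of $V_p^{m-1}$ at the origin; you do this correctly by invoking the uniform bound $f_p\le C$ from Lemma \ref{lemma-stima-fp} together with the vanishing $\eta^{m-1}(s)\lesssim s^{\theta_{m-1}/2}$, producing the integrable dominant $s^{\theta_{m-1}-1}$. Both routes are valid; the paper's two-sided truncation is the more economical choice because it avoids the origin entirely and does not need Lemma \ref{lemma-stima-fp} at this stage, whereas your version trades that simplicity for a slightly more direct passage to the limit via dominated convergence.
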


\begin{proof} 
	%\edz{{\it (quasi come in \cite[Lemma 3.10]{AG-N2})}}
	From the variational characterization \eqref{var-char-1}, it suffices to exhibit	for every $0<\e<1$ a sequence $\varphi_p\in \mathcal H_{0,\mathrm{rad}}$ such that 
	\begin{equation}\label{goal}
	\nu_1(p) \le \dfrac{\int_0^1 r\left(|\varphi_p'|^2 -V_p\varphi_p^2 \right) dr}{\int_0^1  r^{-1}\varphi_p^2 dr } \le  \beta^{m-1} + \e 
	\end{equation}
	if $p$ is large enough.
	So we pick a cut-off function $\Phi\in C^{\infty}_0(0,\infty)$ such that  
	\begin{equation}\label{eq:cut-off}
	\begin{array}{ll}
	0\le \Phi(r) \le 1 , \ &  \Phi(r)=\begin{cases}
	1 &  \text{ if } \frac{1}{R} <   r< R , \\
	0 &\text{ if  } 0\le r < \frac{1}{2R} \text{ or }  r> {2R} ,
	\end{cases} \\
	&  \left| \Phi'(r)\right|\leq  \begin{cases}  2R  & \text{ if } \frac{1}{2R} < r< \frac{1}{R} ,\\ \frac 2{R}  & \text{ if } R< r < 2R.
	\end{cases}
	\end{array}\end{equation}
	Letting $\e_p=\e_{m-1,p}$ and $\eta=\eta^{m-1}$ as defined in \eqref{epsilon} and  \eqref{S-L-lim-autof}, respectively, we set 	
	\begin{align}
	\label{psi-epsilon}
	\varphi_p(r)=\eta\left(\frac{r}{\e_p}\right) \Phi \left(\frac r{ \e_p} \right)  , \quad \text{ as } r\in[0,1] .
	\end{align}
	The function $\eta$  is increasing and  decreasing on an interval $(0, a)$ and $\left(a, \infty\right)$ respectively, moreover   $\lim\limits_{s\to 0}\eta(s)=0$, $\lim\limits_{s\to \infty}\eta(s)=0$, and $\int\limits_0^{\infty} s^{-1} \eta^2 ds =1$. So we can choose $R=R(\e)$ in such a way that
	\begin{align}\label{R-e-1}
	\eta(s)\le \eta(\frac{1}{R})<\frac{\e}{4} \quad \text{ for  $s< \frac{1}{R}\quad$ and }\quad \eta(s)\le \eta(R)<\frac{\e}{4} \quad \text{ for  $s>R$, }\\
	\label{R-e-2}
	\int_0^{\infty} s^{-1} \eta^2 \Phi^2 ds \ge\int_{\frac{1}{R}}^{R} s^{-1} \eta^2  ds  \ge  1-\e /8. 
	\end{align}
	Notice that since $\e_p\to 0$ we may assume w.l.g.~that $p$ is so large that $1/\e_p > 2R$, so that $\varphi_p\in \mathcal H_{0,\mathrm{rad}}$.
	
	Inserting the test function $\varphi_p$ in the variational characterization \eqref{var-char-1} of $\nu_1(p)$ we have
	\begin{equation}\label{IF1.0}
	\nu_1(p) \le \dfrac{\int_0^1 r\left(|\varphi_p'|^2 -V_p\varphi_p^2 \right) dr}{\int_0^1  r^{-1}\varphi_p^2 dr, }
	\end{equation}
	Next we estimate all the terms.\\
	Using the relation $[(fg)']^2= f'(fg^2)'+f^2(g')^2$, scaling with respect to $\e$ and using the equation \eqref{S-L-lim} satisfied by $\eta$ (recall that $\Phi$ has compact support) one gets
	\begin{eqnarray}\label{IF1.1}
	\int_0^1 r|\varphi_p'|^2\,dr &=&\int_0^1 r \left[\left(\eta\Big(\frac{r}{\e_p}\Big) \Phi \Big(\frac{r}{\e_p}\Big) \right)'\right]^2dr\nonumber
	\\
	&=&
	\frac{1}{\e_p}\int_0^1 r   \eta'\Big(\frac{r}{\e_p}\Big)  
	\left( \eta\Big(\frac{r}{\e_p}\Big) \Phi^2 \Big(\frac{r}{\e_p}\Big)\right)'dr\  +  \frac{1}{\e_p^2} \ \int_0^1 r \eta^2\Big(\frac{r}{\e_p}\Big) \left(\Phi'\Big(\frac{r}{\e_p}\Big)\right)^2dr\nonumber
	\\
	&=&\int_0^{
		\frac{1}{\e_p}} s  \eta'  
	\left( \eta \Phi^2 \right)'ds\  +  \ \int_0^{
		\frac{1}{\e_p}} s \eta^2 \left(\Phi'\right)^2ds\nonumber
	\\
	&\overset{\eqref{S-L-lim}}{=} & \beta^{m-1} \int_0^{\infty}s^{-1}\eta^2\Phi^2\, ds +\int_0^{\infty}se^{Z_{m-1}}\eta^2\Phi^2\, ds +  \ \int_0^{
		\infty} s \eta^2 \left(\Phi'\right)^2ds
	\end{eqnarray}
	and by the choice of $\Phi$ we have 
	\begin{align} \nonumber
	\int_0^{\infty} s \eta^2 (\Phi')^2 ds &\leq {4}{R^2}\int_{\frac{1}{2R}}^{\frac{1}{R}}  s \eta^2 ds +
	\frac{4}{R^2}\int_R^{2R}  s \eta^2 ds  \\ 
	&\overset{\eqref{R-e-1}}{<}  \frac{\e^2R^2}{4}\int_{\frac{1}{2R}}^{\frac{1}{R}}  s \, ds +\frac{\e^2 }{4 R^2}  \int_R^{2R} s\,  ds = \frac{3\e^2}{4}  < \frac{3\e}{4}.\label{IF1.2}
	\end{align}
	Furthermore scaling with respect to $\varepsilon_p$, since $\frac{1}{\varepsilon_p}>2R$ we get
	\begin{equation}\label{IF1.3}
	\int_0^1V_p\varphi_p^2 dr=\int_0^{\infty}s V_p^{m-1}\eta^2 \Phi^2 ds
	\end{equation}
	and
	\begin{equation}\label{IF1.3bis}
	\int_0^1  r^{-1}\varphi_p^2 dr=\int_0^{\infty}  s^{-1}\eta^2 \Phi^2  ds.
	\end{equation}
	%
	%\begin{align*}
	%\nu_1(p) & \le \dfrac{\int_0^1 r\left(|\varphi_p'|^2 - V_p \varphi_p^2 \right) dr}
	%{\int_0^1  r^{-1}\varphi_p^2 dr } \\
	%& = \beta^{m-1}+  \dfrac{\int_0^{\infty} s ( V^{m\!-\!1}_p- V^{m\!-\!1}) \eta^2  \Phi^2  ds + \int_0^{\infty} s \eta^2 (\Phi')^2 ds }
	%{\int_0^{\infty}  s^{-1}\eta^2 \Phi^2 ds }.
	%\end{align*}
	Inserting \eqref{IF1.1}, \eqref{IF1.2}, \eqref{IF1.3} and \eqref{IF1.3bis} in \eqref{IF1.0} we obtain
	\begin{align*}
	\nu_1(p) & < \beta^{m-1} +  \dfrac{\int_0^{\infty} s (V^{m\!-\!1}_p-  e^{Z_{m-1}}) \eta^2  \Phi^2  ds + \frac{3\e}{4}  }
	{\int_0^{\infty}  s^{-1}\eta^2 \Phi^2  ds} \\
	& \overset{\eqref{R-e-2}}{<} \beta^{m-1} + \frac{\int_0^{+\infty} s  \left|e^{Z_{m-1}}- V^{m\!-\!1}_p\right| \eta^2\Phi^2 ds+ \frac{3}{4} \e}{1-\e /8} .
	\end{align*}
	On the other hand by the properties of $\Phi$  we have 
	\begin{align*}
	\int_0^{+\infty} s  \left| V^{m\!-\!1}_p - e^{Z_{m-1}}\right| \eta^2\Phi^2 ds \le \sup_{(\frac{1}{2R},2R)}| V^{m\!-\!1}_p- e^{Z_{m-1}}|\int_{\frac{1}{2R}}^{2R}s \eta^2 ds 
	\end{align*}
	and since $ V^{m\!-\!1}_p\to e^{Z_{m-1}}$ uniformly on $[\frac{1}{2R},2R]$ we can take $p_{\e}$ in dependence by $\e$ and $R(\e)$ large enough such that 
	\[ \sup_{(\frac{1}{2R},2R)}| V^{m\!-\!1}_p- e^{Z_{m-1}}| \le \frac{\e }{8\int_{\frac{1}{2R}}^{2R}s \eta^2 ds }  \quad \text{ for } p> p_{\e},\]
	which concludes the proof of \eqref{goal}.
	
\end{proof}

\

\

\begin{proof}[Proof of Proposition \ref{prop:BaseInduttiva}] By Lemma \ref{lemma:stimaPrimoAutovalore} and \eqref{ordine-beta}
	\[
	\limsup_{p\rightarrow +\infty} \nu_{1}(p)\leq\beta^{m-1} <\beta^i\qquad i=0,\ldots,m-2.
	\] 
	As a consequence,  Corollary \ref{remark}-{\it
		(i)} implies that  
	\begin{equation}\label{tutteNulle}
	\overline\psi^i_1 \equiv 0\qquad \mbox{ as }i= 0, \dots m-2.\end{equation}
	So, by Corollary \ref{remark}-{\it (ii)},  Proposition \ref{prop:BaseInduttiva} is proved after checking that  
	\begin{equation}
	\label{eta1NoZero}
	\overline\psi^{m-1}_1 \not\equiv 0.
	\end{equation} 

	\noindent
	To this aim we fix $\delta >0$ such that $\delta<-\beta_{m-1}/3$ and $K=K(\delta)$ and $p (\delta)$ as in Lemma \ref{lemma-stima-fp}. By the definition of $\nu_1(p)$ it follows that 
	\begin{eqnarray*}
		-\nu_1(p)&=&-\int_0^1 r[(\psi_{1,p}')^2-V_p(\psi_{1,p})^2]dr\\
		&\leq &\int_0^1 rV_p(\psi_{1,p})^2dr\\
		&=& \int_{G_p(K)}  rV_p(\psi_{1,p})^2 dr+\int_{0}^{K\varepsilon_{0,p}}
		rV_p(\psi_{1,p})^2 dr  +\sum_{i=1}^{m-1} \int_{\frac{1}{K}\varepsilon_{i,p}}^{K\varepsilon_{i,p}} rV_p(\psi_{1,p})^2 dr 
		\\
		&=&   I_1(p)+I_2(p)+I_3(p)
	\end{eqnarray*}
	The normalization of the eigenfunction and the estimate obtained in Lemma \ref{lemma-stima-fp} assure that
	\begin{align*}
	I_1(p)=& \int_{G_p(K)}  rV_p(\psi_{1,p})^2 dr =\int_{G_p(K)}  f_{p} \frac{(\psi_{1,p})^2}{r}dr 
	\\ \leq &\sup_{G_p(K)}f_p \int_0^1 \frac{(\psi_{1,p})^2}{r}
	dr
	= \sup_{G_p(K)}f_p \leq \delta
	\end{align*}
	for $p\geq p(\delta)$.
	\\
	Observe that, by Lemma \ref{lemma:C1loc}, $ \psi^i_{1,p}\to \overline\psi_{1}^{i}$  in $C^1_{\loc}(0,+\infty)$ for $i=1,\dots m-1$, and in $C^1_{\loc}[0,+\infty)$ for $i=0$. Indeed $\bar\nu_1:=\limsup_{p\rightarrow +\infty}\nu_1(p)\leq\beta^{m-1}\leq\beta^{1}<-25$ by \eqref{ordine-beta}.
Furthermore, by \eqref{pot-lim-i} and \eqref{pot-lim-0}, 
	$ V^i_p \to e^{Z_i}$ in $C^0_{\loc}(0,+\infty)$ for $i=1,\ldots,m-1$, or respectively in $C^0_{\loc}[0,+\infty)$ for $i=0$.
	Hence, rescaling the second integral according to $\e_{0,p}$ gives
	\begin{align*}
	I_2(p)=& \int_{0}^{K\varepsilon_{0,p}}
	rV_p(\psi_{1,p})^2 dr = \int_0^K r  V^0_p(\psi^0_{1,p})^2 dr
	=
	\int_0^K r  e^{Z_0} (\overline\psi_{1}^{0})^2 dr +o_{p}(1) \overset{\eqref{tutteNulle}}{\leq}\delta,
	\end{align*}
	if $p\geq p_{2}(\delta)$. 
	Similarly, for what concerns the  third term,
	\begin{align*}
	I_3(p)=& \sum_{i=1}^{m-1} \int_{\frac{1}{K}\varepsilon_{i,p}}^{K\varepsilon_{i,p}} rV_p(\psi_{1,p})^2 dr =
	\sum\limits_{i=1}^{m-1} \int_{\frac 1K}^K r  V^i_p (\psi^i_{1,p})^2 dr
	\\
	=&\sum_{i=1}^{m-1} \int_{\frac{1}{K}}^{K} r e^{Z_i}(\overline\psi_{1}^i)^2 dr + o_{p}(1)\\
	\overset{\eqref{tutteNulle}}{\leq}& \int_{\frac{1}{K}}^{K} r e^{Z_{m-1}}(\overline\psi_{1}^{m-1})^2 dr + \delta,
	\end{align*}
	for $p\geq p_{3}(\delta)$.
	Summing up, taking $\bar p=\max\{p(\d), p_2(\d),p_3(\d)\}$ we have
	\[\int_{\frac 1K}^K re^{Z_{m-1}}(\overline\psi^{m-1}_{1})^2 dr\geq -\nu_1(p)-3\d \ \text{  for }p>\bar p\]
	so, passing to the $\liminf$  and using Lemma \ref{lemma:stimaPrimoAutovalore},
	\[\int_{\frac 1K}^K re^{Z_{m-1}}(\overline\psi^{m-1}_{1})^2 dr\geq-\limsup_{{p\rightarrow+\infty}} \nu_1(p)-3\d \ge  -\beta^{m-1}-3\d >0  %{\taglia \frac{\ell+2}2-6\delta> \frac{\ell+2}4}>0
	\]
	by the choice of $\delta$. Hence $\overline\psi^{m-1}_1\neq 0$, concluding the proof. 
\end{proof}

\

\

\subsection{Proof of Proposition \ref{prop:autovAutofIteration}}\label{section:AUTOVALORI_{}INTERMEDI}
Computing the limits of the subsequent eigenvalues is more involved, and it is done in an iterative way.  Similarly as in Section \ref{sottoSectionproof}, also here we follow a two step scheme: first we obtain an estimate from above by producing a suitable test function (Lemma \ref{lemma:stimaAutovalorehesimo}), then we conclude the proof of Proposition \ref{prop:autovAutofIteration} by exploiting the convergence results in Proposition \ref{lem:conv-autof-resc} and taking advantage of   the orthogonality condition \eqref{normalization}.

\begin{lemma} 
	\label{lemma:stimaAutovalorehesimo} Let $h\in\{2,\ldots, m\}$ and assume that Theorem \ref{thm:autovaloriAutofunzioni} holds true for any $j=1, \ldots, h-1$. Then
	\begin{equation}\label{est-lim-rse}\limsup\limits_{p\to \infty} \nu_h(p) \le \beta^{m-h}.\end{equation}
\end{lemma}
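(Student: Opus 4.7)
The plan is to mimic the variational strategy from Lemma \ref{lemma:stimaPrimoAutovalore}, but using a test function peaked at the $(m\!-\!h)$-th bubble scale and then projecting it onto the orthogonal complement of the span of $\psi_{1,p},\dots ,\psi_{h-1,p}$ in $\mathcal L$. The inductive hypothesis, which says that each $\psi_{j,p}$ with $j<h$ is asymptotically concentrated at the scale $\varepsilon_{m-j,p}$ (and vanishes at every other scale, including the one we are going to pick), will be used to show that this projection is a small perturbation.

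First, exactly as in \eqref{psi-epsilon}, set $\varphi_p(r):=\eta^{m-h}(r/\varepsilon_{m-h,p})\,\Phi(r/\varepsilon_{m-h,p})$ with $\Phi$ the cut-off in \eqref{eq:cut-off} and $R=R(\e)$ chosen so large that $\eta^{m-h}$ is tiny outside $[1/R,R]$. Repeating the computation in Lemma \ref{lemma:stimaPrimoAutovalore} verbatim (the scale $\varepsilon_{m-h,p}$ plays the same role there as $\varepsilon_{m-1,p}$) yields
\[
\frac{Q_p(\varphi_p)}{N_p(\varphi_p)}\;\le\;\beta^{m-h}+\e \quad\text{for $p$ large,}
\]
where I write $Q_p(\psi):=\int_0^1 r(|\psi'|^2-V_p\psi^2)\,dr$ and $N_p(\psi):=\int_0^1 r^{-1}\psi^2\,dr$.

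Next, define the orthogonalized test function
\[
\tilde\varphi_p:=\varphi_p-\sum_{j=1}^{h-1}c_{j,p}\,\psi_{j,p},\qquad c_{j,p}:=\int_0^1 r^{-1}\varphi_p\,\psi_{j,p}\,dr,
\]
which by construction is $\underline{\perp}\;\psi_{1,p},\dots ,\psi_{h-1,p}$ in $\mathcal L$, hence admissible in \eqref{var-char} for $\nu_h(p)$. Rescaling by $\varepsilon_{m-h,p}$, since $\Phi$ is supported in $[1/(2R),2R]$,
\[
c_{j,p}=\int_{1/(2R)}^{2R} s^{-1}\,\eta^{m-h}(s)\Phi(s)\,\psi^{m-h}_{j,p}(s)\,ds .
\]
Because $j<h$ gives $m-j\ne m-h$, the inductive hypothesis (namely that $\psi^{i}_{j,p}\to 0$ in $C^1_{\loc}(0,\infty)$ whenever $i\ne m-j$) yields $c_{j,p}\to 0$ for every $j=1,\dots ,h-1$. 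This is the main technical input.

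Finally, using the eigenfunction identity $B_p(\varphi,\psi_{j,p})=\nu_j(p)\int_0^1 r^{-1}\varphi\,\psi_{j,p}\,dr$ for the symmetric bilinear form $B_p$ associated to $Q_p$, together with the orthonormality \eqref{normalization}, a direct expansion gives
\[
Q_p(\tilde\varphi_p)=Q_p(\varphi_p)-\sum_{j=1}^{h-1}\nu_j(p)\,c_{j,p}^2,\qquad N_p(\tilde\varphi_p)=N_p(\varphi_p)-\sum_{j=1}^{h-1}c_{j,p}^2 .
\]
Since by inductive hypothesis $\nu_j(p)\to\beta^{m-j}\in\mathbb R$ for $j<h$ and $c_{j,p}\to 0$, while $N_p(\varphi_p)$ stays bounded away from $0$ (cf.\ \eqref{R-e-2}), we obtain
\[
\nu_h(p)\;\le\;\frac{Q_p(\tilde\varphi_p)}{N_p(\tilde\varphi_p)}\;=\;\frac{Q_p(\varphi_p)+o(1)}{N_p(\varphi_p)+o(1)}\;\le\;\beta^{m-h}+2\e
\]
for $p$ large enough, and \eqref{est-lim-rse} follows by the arbitrariness of $\e$. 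The delicate point is the bound $c_{j,p}\to 0$: it is precisely where the tower-of-bubbles separation of scales, encoded in the inductive statement of Theorem \ref{thm:autovaloriAutofunzioni}, enters to decouple $\varphi_p$ from the previous eigenfunctions.
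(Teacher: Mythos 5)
Your argument is correct and is essentially the paper's own proof in slightly different notation: the paper writes the orthogonalized test function as $\varphi_p = \eta^{m-h}(r/\e_{m-h,p})\Phi(r/\e_{m-h,p}) + \sum_{j<h} a_{j,p}\psi_{j,p}$ with $a_{j,p} = -c_{j,p}$, and proves $a_{j,p}\to 0$ by the same rescaling-plus-inductive-hypothesis argument (using the weak $\mathcal D_{\mathrm{rad}}$ convergence, which suffices; your $C^1_{\loc}$ version is equally valid). The expansion of the Rayleigh quotient via the eigenfunction identity and normalization \eqref{normalization} in the paper (equations \eqref{num} and \eqref{den}) reproduces exactly your identities $Q_p(\tilde\varphi_p)=Q_p(\varphi_p)-\sum\nu_j(p)c_{j,p}^2$ and $N_p(\tilde\varphi_p)=N_p(\varphi_p)-\sum c_{j,p}^2$.
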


\begin{proof}
	By the variational characterization \eqref{var-char}, it suffices to exhibit	for every $0<\e<1$ a function $\varphi_p\in \mathcal H_{0,\mathrm{rad}}$, $\varphi_p\underline{\perp} \psi_{1,p}, \psi_{2,p},\ldots, \psi_{h-1,p}$ such that 
	\begin{equation}\label{goalh}
	\dfrac{\int_0^1 r\left(|\varphi_p'|^2 -V_p\varphi_p^2 \right) dr}{\int_0^1  r^{-1}\varphi_p^2 dr } \le  \beta^{m-h} + \e 
	\end{equation}
	if $p$ is large enough.
	Let $\Phi=\Phi_R$ be the  cut-off function defined in \eqref{eq:cut-off}, $\e_p=\e_{m-h,p}$ and $\eta=\eta^{m-h}$ as defined in \eqref{epsilon} and  \eqref{S-L-lim-autof}, respectively, and set 	
	\begin{align}
	\label{psi-epsilon}
	\varphi_p(r)=\eta\Big(\frac{r}{\e_p}\Big) \Phi \Big(\frac r{ \e_p} \Big)+\sum_{j=1}^{h-1} a_{j,p}\psi_{j,p}  , \quad \text{ as } r\in[0,1],
	\end{align}
	%\edz{\textcolor{green}{io cambierei tutti i pedici i con un'altra lettera e lascerei i solo come apice}}
	with $R=R(\e)$ satisfying \eqref{R-e-1}, \eqref{R-e-2} and $a_{j,p}\in\mathbb R$  choosen so that $\varphi_p\underline{\perp} \psi_{1,p}, \psi_{2,p},\ldots, \psi_{h-1,p}$, namely:
	\begin{equation}\label{def_a_pi}
	a_{j,p}:=-\int_0^1 r^{-1}\psi_{j,p}(r)\eta\Big(\frac{r}{\e_p}\Big) \Phi \Big(\frac r{ \e_p} \Big)dr.\end{equation}

	Notice that since $\e_p\to 0$ we may assume w.l.g.~that $p$ is so large that $1/\e_p > 2R$, so that $\varphi_p\in \mathcal H_{0,\mathrm{rad}}$.
	
	Furthermore 
	\begin{equation} \label{aiptoZero} a_{j,p}\to 0 \mbox{ as }p\to \infty.\end{equation}
	Indeed,  rescaling  w.r.t. $\e_p$,  using that $\Phi$ has compact support and that the interval $\left(\frac{r_{m-h,p}}{\e_p}, \frac{r_{m-h+1,p}}{\e_p}\right)$   invades $(0,\infty)$ by \eqref{zeri-lim}, we can write for $p$ large
	\begin{eqnarray*}
		\int_0^1 r^{-1}\psi_{j,p}(r)\eta\Big(\frac{r}{\e_p}\Big) \Phi \Big(\frac r{ \e_p} \Big)\, dr  
		&=& \int_0^{r_{m-h,p}} r^{-1}\psi_{j,p}(r)\eta\Big(\frac{r}{\e_p}\Big) \Phi \Big(\frac r{ \e_p} \Big)\, dr\\
		&&+  \int_{r_{m-h,p}}^{r_{m-h+1,p}} r^{-1}\psi_{j,p}(r)\eta\Big(\frac{r}{\e_p}\Big) \Phi \Big(\frac r{ \e_p} \Big)\, dr\\
		&& + \int_{r_{m-h+1,p}}^1 r^{-1}\psi_{j,p}(r)\eta\Big(\frac{r}{\e_p}\Big) \Phi \Big(\frac r{ \e_p} \Big)\, dr\\
		&=&\int_{r_{m-h,p}/\e_{p}}^{r_{m-h+1,p}/\e_{p}} s^{-1} \, \psi_{j,p}^{m-h}\eta \, \Phi   \, ds  .
	\end{eqnarray*} 
	\eqref{aiptoZero} then follows passing to the limit
	as $p\to \infty$ and using that $\psi_{j,p}^{m-h}\to \overline\psi_j^{m-h}$ weakly in 
	$\mathcal D_{\mathrm{rad}}$ by Proposition \ref{lem:conv-autof-resc} and that $\overline\psi_j^{m-h}=0$  for $j=1,\ldots, h-1$ by assumption.
	
	\
	
	We want to estimate all the integrals in the left hand side of \eqref{goalh}. Observe that
	\begin{eqnarray}\label{IF1}
	\int_0^1 r|\varphi_p'|^2dr
	&=&\int_0^1 r \left[\left(\eta\Big(\frac{r}{\e_p}\Big) \Phi \Big(\frac{r}{\e_p}\Big)\right)'\right]^2dr
	+\sum_{j=1}^{h-1}a_{j,p}^2\int_0^1 r (\psi_{j,p}')^2dr\nonumber\\
	&&
	+ 2\sum_{j=1}^{h-1}a_{j,p}\int_0^1  r \left(\eta\Big(\frac{r}{\e_p}\Big) \Phi \Big(\frac{r}{\e_p}\Big)\right)' \psi_{j,p}' dr \nonumber\\
	&& + \sum_{j,\ell=1, \ j\neq \ell}^{h-1}a_{j,p}a_{\ell,p}\int_0^1  r 
	\psi_{j,p}' \psi_{\ell,p}' dr\nonumber
	\\
	&=&A_p+ B_p +C_p+D_p
	\end{eqnarray} 
	and similarly that
	\begin{eqnarray}\label{IF2}
	\int_0^1 rV_p\varphi_p^2  dr
	&=&\int_0^1 rV_p(r) \eta\Big(\frac{r}{\e_p}\Big)^2 \Phi \Big(\frac{r}{\e_p}\Big)^2dr
	+\sum_{j=1}^{h-1}a_{j,p}^2\int_0^1 r V_p(r)(\psi_{j,p})^2dr\nonumber\\
	&&
	+ 2\sum_{j=1}^{h-1}a_{j,p}\int_0^1  rV_p(r) \eta\Big(\frac{r}{\e_p}\Big) \Phi \Big(\frac{r}{\e_p}\Big) \psi_{j,p} dr\\
	&&
	+\sum_{j,\ell=1, \ j\neq \ell}^{h-1}a_{j,p}a_{\ell,p}\int_0^1  r V_p(r)
	\psi_{j,p} \psi_{\ell,p} dr\nonumber
	\\
	&=&E_p+ F_p +G_p+H_p.
	\end{eqnarray}
	The same computations as in Lemma \ref{lemma:stimaPrimoAutovalore} (see \eqref{IF1.1} and \eqref{IF1.3}) show that		
	\begin{eqnarray}\label{IF3e4}
	A_p-E_p&=&\beta^{m-h}\int_0^{\infty}s^{-1}\eta^2\Phi^2\, ds +\int_0^{\infty}se^{Z_{m-h}}\eta^2\Phi^2\, ds +  \ \int_0^{
		\infty} s \eta^2 \left(\Phi'\right)^2ds\nonumber\\
	&&-\int_0^{\infty}s V_p^{m-h}\eta^2 \Phi^2 ds.
	\end{eqnarray}
	Next using that $\psi_{j,p}$ solves \eqref{S-L} and \eqref{normalization}, and recalling the definition of $a_{j,p}$ in \eqref{def_a_pi}, we have
	\begin{eqnarray}
	B_p-F_p&=&\sum_{j=1}^{h-1}a_{j,p}^2\nu_j(p);\label{IF5}\\
	C_p-G_p&=&2\sum_{j=1}^{h-1}a_{j,p}\nu_j(p)\int_0^1r^{-1}\eta\Big(\frac{r}{\e_p}\Big) \Phi \Big(\frac{r}{\e_p}\Big) \psi_{j,p} dr\nonumber\\
	& \overset{\eqref{def_a_pi}}{=}&-2 \sum_{j=1}^{h-1}a_{j,p}^2\nu_j(p);\label{IF6}\\
	D_p -H_p&=&\sum_{j,\ell=1, j \neq \ell}^{h-1}a_{j,p}a_{\ell,p}\nu_{\ell}(p)\int_0^1r^{-1}\psi_{j,p} \psi_{\ell,p} dr=0\label{IF7}.
	\end{eqnarray}
	Hence substituting \eqref{IF3e4}, \eqref{IF5}, \eqref{IF6} and \eqref{IF7} in \eqref{IF1} and \eqref{IF2} we infer:
	\begin{eqnarray}\nonumber
	\int_0^1 r|\varphi_p'|^2dr-\int_0^1 rV_p\varphi_p^2  dr &=&
	\beta^{m-h}\int_0^{\infty}s^{-1}\eta^2\Phi^2\, ds  +  \int_0^{
		\infty} s \eta^2 \left(\Phi'\right)^2ds\\
	&& 
	+\int_0^{\infty}s(e^{Z_{m-h}}- V_p^{m-h})\eta^2\Phi^2\, ds\nonumber\\
	&& -\sum_{j=1}^{h-1}a_{j,p}^2\nu_j(p).
	\label{num}
	\end{eqnarray}
	On the other hand using once more \eqref{normalization} and \eqref{def_a_pi}, rescaling with respect to $\e_p$ and using the properties of $\Phi$ it also  follows that 
	\begin{eqnarray} \nonumber
	\int_0^1r^{-1}\varphi_p^2 dr &=&\int_0^1 r^{-1}\left(\eta\left(\frac r{ \e_p} \right)\Phi\left(\frac r{ \e_p} \right)\right)^2 dr  +  \sum\limits_{j,  \ell=1}^{h-1}a_{j,p} a_{\ell,p} \int_0^1 r^{-1}\psi_{j,p} \psi_{\ell,p} dr \\ \nonumber 
	&&+2 \sum\limits_{j=1}^{h-1}a_{j,p}\int_0^1 r^{-1} \psi_{j,p}\eta\left(\frac r{ \e_p}\right)\Phi\left(\frac r{\e_p}\right) dr \\\nonumber 
	&=& \int_0^1r^{-1}\left(\eta\left(\frac r{ \e_p} \right)\Phi\left(\frac r{ \e_p} \right)\right)^2 dr- \sum\limits_{j=1}^{h-1}a_{j,p}^2\\ 
	%\intertext{}
	\label{den}
	&=& \int_0^{\infty} s^{-1}\eta^2\Phi^2  ds  - \sum\limits_{j=1}^{h-1}a_{j,p}^2.
	\end{eqnarray}
	Inserting \eqref{num} and \eqref{den}  into the l.h.s. of \eqref{goalh} we get
	\begin{eqnarray*}
		&&\frac{\int_0^1 r\left(|\varphi_p'|^2 -V_p\varphi_p^2 dr\right) dr}{\int_0^1  r^{-1}\varphi_p^2 dr }	
		=\\
		&&	\;\;\quad = \quad \beta^{m-h} + \frac{\int_0^{\infty} s (V^{m-h}_p-  e^{Z_{m-h}}) \eta^2  \Phi^2  ds + \int_0^{\infty} s \eta^2 (\Phi')^2 ds- \sum\limits_{j=1}^{h-1}a_{j,p}^2(\nu_j(p)-\beta^{m-h})}{\int_0^{\infty}  s^{-1}\eta^2 \Phi^2 ds- \sum\limits_{j=1}^{h-1}a_{j,p}^2 }
		\\
		&&\overset{\eqref{aiptoZero}+\eqref{A1}}{=}\beta^{m-h}+\frac{\int_0^{\infty} s (V^{m-h}_p-   e^{Z_{m-h}}) \eta^2  \Phi^2  ds + \int_0^{\infty} s \eta^2 (\Phi')^2 ds+o(1)}{\int_0^{\infty}  s^{-1}\eta^2 \Phi^2 ds +o(1) }
		\\
		&&\overset{\eqref{R-e-2}+\eqref{IF1.2}}{\leq}\beta^{m-h}+\frac{\int_0^{\infty} s (V^{m-h}_p-  e^{Z_{m-h}}) \eta^2  \Phi^2  ds + \tfrac{3}{4}\e +o(1)}{1 -\tfrac{\e}{8} +o(1) }.
	\end{eqnarray*}
	On the other hand, similarly as at the end of the proof of Lemma \ref{lemma:stimaPrimoAutovalore}, one can prove that 
	\begin{align*}
	\int_0^{+\infty} s  \left| V^{m\!-\!h}_p - e^{Z_{m-h}}\right| \eta^2\Phi^2 ds \le \sup_{(\frac{1}{2R},2R)}| V^{m\!-\!h}_p- e^{Z_{m-h}}|\int_{\frac{1}{2R}}^{2R}s \eta^2 ds \leq \frac{\varepsilon}{8},
	\end{align*}
	%and since $ V^{m\!-\!h}_p\to V^{m\!-\!h}$ uniformly on $[\frac{1}{2R},2R]$ we can take $p_{\e}$ in dependence by $\e$ and $R(\e)$ large enough such that 
	%\[ \sup_{(\frac{1}{2R},2R)}| V^{m\!-\!h}_p- V^{m\!-\!1}| \le \frac{\e }{8\int_{\frac{}{2R}}^{2R}s \eta^2 ds }  \quad \text{ for } p> p_{\e},\]
	which concludes the proof of \eqref{goalh}.
\end{proof}

Next we conclude the proof of Proposition \ref{prop:autovAutofIteration} by exploiting the orthogonality  condition \eqref{normalization}, which allows to pick up, among all the rescaled functions introduced in \eqref{autof-resc}, the only one which has a nontrivial limit. %To this end, a role is played by the $C^1_{\loc}$ convergence stated in Lemma \ref{lemma:C1loc}. %Dealing with the last eigenvalue is indeed easier since, even tho
	%The last eigenvalue has to be deal in separately. Since an estimate  from  below is already known by \eqref{est-rse}, the estimate from above we are going to obtain is sufficient to compute $\lim_{p\rightarrow +\infty} \nu_m(p)$. On the other, showing that the related eigenfunction has nontrivial limit requests some more work.                     

\begin{proof}[Proof of Proposition \ref{prop:autovAutofIteration}]
	Fix  $h\in\{2,\ldots , m-1\}$, we want to prove that:
		\begin{equation}\label{limh}
		\lim_{p\rightarrow +\infty}\nu_{h}(p)=\beta^{m-h}
		\end{equation}
		and that there exists $A_{h}\neq 0$ such that
		\begin{eqnarray*}
			&& \overline\psi_h^{m-h}=A_{h}\eta^{m-h};\\
			&&\overline\psi_h^{i}= 0,\qquad i=0,\ldots, m-1, \ i\neq m-h.
		\end{eqnarray*}
	
	 By Lemma \ref{lemma:stimaAutovalorehesimo} and \eqref{ordine-beta}
		$\limsup_{p\rightarrow +\infty} \nu_{h}(p)\leq\beta^{m-h}<\beta^{m-i}$ for $i=h+1, \dots m $, then Corollary \ref{remark}-{\it(i)} implies that  
		\begin{equation}\label{primiEtaNulli}
		\overline\psi_{h}^{m-i}= 0, \qquad i=h+1, \dots m.
		\end{equation}
		Furthermore the claim follows by showing that $\overline\psi_h^{m-h}\neq 0$, thanks to Corollary \ref{remark}-{\it(ii)}.
		So we assume by contradiction that 
		\begin{equation}\label{altroEnaNullo}
		\overline\psi_{h}^{m-h}= 0.
		\end{equation}
		As a preliminary step, we will deduce from \eqref{primiEtaNulli}, \eqref{altroEnaNullo} that there exists $\kappa  \in\{ 1,\dots h-1 \}$  such that
		\begin{equation}
		\label{etam-1noZero}
		\overline\psi_{h}^{m-\kappa}\neq 0 .
		\end{equation}

	In order to prove \eqref{etam-1noZero} let us fix $\delta >0$ such that $\delta<-\beta^{m-h}/3$ and $K=K(\delta)$ as in Lemma \ref{lemma-stima-fp}. By the definition of of $\nu_{h}(p)$ it follows that
	\begin{eqnarray*}
		-\nu_h(p)&=&-\int_0^1 r[(\psi_{h,p}')^2-V_p(\psi_{h,p})^2]dr\\
		&\leq &\int_0^1 rV_p(\psi_{h,p})^2dr\\
		&=& \int_{G_p(K)}  rV_p(\psi_{h,p})^2 dr+\int_{0}^{K\varepsilon_{0,p}}
		rV_p(\psi_{h,p})^2 dr  +\sum_{i=1}^{m-1} \int_{\frac{1}{K}\varepsilon_{i,p}}^{K\varepsilon_{i,p}} rV_p(\psi_{h,p})^2 dr \\
		&=&   I_1(p)+I_2(p)+I_3(p) .
	\end{eqnarray*}
	
	We estimate these three terms with arguments similar  to the ones exploited in  the proof of Proposition \ref{prop:BaseInduttiva}. Indeed the normalization of the eigenfunction and the estimate obtained in Lemma \ref{lemma-stima-fp} assure that
	\begin{align*}
	I_1(p)=& \int_{G_p(K)}  rV_p(\psi_{h,p})^2 dr =\int_{G_p(K)}  f_{p} \frac{(\psi_{h,p})^2}{r}dr 
	\\ \leq &\sup_{G_p(K)}f_p \int_0^1 \frac{(\psi_{h,p})^2}{r}
	dr
	= \sup_{G_p(K)}f_p \leq \delta
	\end{align*}
	for $p\geq p_1(\delta)$.
	\\
	Moreover, by Lemma \ref{lemma:C1loc}, 
\begin{eqnarray}
\label{convergenzebuoneh}
 \psi^i_{h,p}\to \overline\psi_{h}^{i}  &&\mbox{ in }C^1_{\loc}(0,+\infty) \mbox{ for }i=1,\dots m-1, \\
\label{convergenzebuoneh0}&&\mbox{  in }C^1_{\loc}[0,+\infty)\mbox{ for }i=0.
\end{eqnarray}
	Indeed $\bar\nu_h:=\limsup_{p\rightarrow +\infty}\nu_h(p)\leq\beta^{m-h}\leq \beta^{1}<-25$ by \eqref{ordine-beta}.\\ Furthermore, by \eqref{pot-lim-i} and \eqref{pot-lim-0}, 
	$ V^i_p \to e^{Z_i}$ in $C^0_{\loc}(0,+\infty)$ for $i=1,\ldots,m-1$ , in $C^0_{\loc}[0,+\infty)$ for $i=0$.\\
	Hence, rescaling the second integral according to $\e_{0,p}$ gives
	\begin{align*}
	I_2(p)=& \int_{0}^{K\varepsilon_{0,p}}
	rV_p(\psi_{h,p})^2 dr = \int_0^K r  V^0_p(\psi^0_{h,p})^2 dr
	\overset{\eqref{lim-autof-risc-0}}{=}
	\int_0^K r  e^{Z_0} (\overline\psi_{h}^{0})^2 dr +o_{p}(1) \overset{\eqref{primiEtaNulli}}{\leq}\delta,
	\end{align*}
	if $p\geq p_{2}(\delta, K)$. 
	Similarly, for what concerns the  third term, 
	\begin{eqnarray*}
		I_3(p)&=& \sum_{i=1}^{m-1} \int_{\frac{1}{K}\varepsilon_{i,p}}^{K\varepsilon_{i,p}} rV_p(\psi_{h,p})^2 dr =
		\sum\limits_{i=1}^{m-1} \int_{\frac 1K}^K r  V^i_p (\psi^i_{h,p})^2 dr\\
		&\overset{\eqref{lim-autof-risc}}{=}&\sum_{i=1}^{m-1} \int_{\frac{1}{K}}^{K} r e^{Z_i}(\overline\psi_{h}^i)^2 dr + o_{p}(1)\\
		&\overset{\eqref{primiEtaNulli},  \eqref{altroEnaNullo}}{\leq} &   \sum_{\kappa=1}^{h-1} \int_{\frac{1}{K}}^{K} r e^{Z_{m-\kappa}} (\overline\psi_{h}^{m-\kappa})^2 dr +\d,
	\end{eqnarray*} 
	for $p\geq p_3(\delta,K)$.
	Summing up we then get
	\[\sum_{\kappa=1}^{h-1} \int_{\frac{1}{K}}^{K} r e^{Z_{m-\kappa}}(\overline\psi_{h}^{m-\kappa})^2 dr \geq -\nu_h(p)-3\delta, \qquad \mbox{for }p\geq\bar p:=\max\{p_{1}(\delta),p_{2}(\delta,K),p_{3}(\delta,K)\}.\]
	Passing to the $\liminf$ as $p\rightarrow \infty$ and using Lemma \ref{lemma:stimaAutovalorehesimo} we get\[\sum_{\kappa=1}^{h-1} \int_{\frac{1}{K}}^{K} r e^{Z_{m-\kappa}}(\overline\psi_{h}^{m-\kappa})^2 dr \geq -\limsup_{p\rightarrow \infty}\nu_h(p)-3\delta \geq -\beta_{m-h}-3\delta>0,\]
	by the choice of $\delta$, which gives \eqref{etam-1noZero}.

	\

	\
	
	By \eqref{etam-1noZero}, Corollary \ref{remark}-{\it (ii)} implies that
	there exists $A_{h}\neq 0$ such that  
	\begin{eqnarray}\label{AlternativaImpossibile}&&\overline\psi_h^{m-\kappa}=A_h\eta^{m-\kappa}\\
	&&\label{altreAlternativeImpossibili}\overline\psi_h^{i}=0,\qquad  i=0,\ldots, m-1,\ i\neq m-\kappa.\end{eqnarray} 
	
	Furthermore, since by assumption Theorem \ref{thm:autovaloriAutofunzioni} holds true for any  index below $h$, there exists $A_{\kappa}\neq 0$ such that
	\begin{eqnarray}
	&&\label{quellaNonNulla}
	\overline\psi_{\kappa}^{m-\kappa}= A_{\kappa}\eta^{m-\kappa}\\
	\label{quelleNulle}
	&&\overline\psi_{\kappa}^{i}= 0,\qquad i=0,\ldots, m-1, \ i\neq m-\kappa.
	\end{eqnarray}
	We conclude the proof by showing that \eqref{AlternativaImpossibile}
		and \eqref{quellaNonNulla} can not hold at the same time, due to the orthogonality condition \eqref{normalization}.
	
	Observe also that by Lemma \ref{lemma:C1loc},  
	\begin{eqnarray}
	\label{convergenzebuoneKappa}
	 \psi^i_{\kappa,p}\to \overline\psi_{\kappa}^{i}  &&\mbox{ in }C^1_{\loc}(0,+\infty) \mbox{ for }i=1,\dots m-1, \\
	\label{convergenzebuoneKappa0}&&\mbox{  in }C^1_{\loc}[0,+\infty)\mbox{ for }i=0,
	\end{eqnarray} 
	since by assumption  $\bar\nu_{\kappa}:=\lim_{p\rightarrow +\infty}\nu_{\kappa}(p)=\beta^{m-\kappa}$ and by \eqref{ordine-beta} $\beta^{m-\kappa}\leq -25$.

	Then since $\psi_{\kappa,p}\underline{\perp}\psi_{h,p}$, for any $K>1$ and for any $p>1$ we write 
	\begin{eqnarray}
	\label{ortogonalitaDiPartenza}
	&&0=\int_{0}^{1}\frac{\psi_{\kappa,p}\psi_{h,p}}{r} dr \nonumber
	\\
	&&= \int_{G_{p}(K)}\frac{\psi_{\kappa,p}\psi_{h,p}}{r} dr +\int_{0}^{K\varepsilon_{0,p}}
	\frac{\psi_{\kappa,p}\psi_{h,p}}{r} dr  
	+
	\sum_{\substack{i=1\\i\neq m-\kappa}}^{m-1} 
	\int_{\frac{1}{K}\varepsilon_{i,p}}^{K\varepsilon_{i,p}}\frac{\psi_{\kappa,p}\psi_{h,p}}{r} dr + \int_{\frac{1}{K}\varepsilon_{m-\kappa,p}}^{K\varepsilon_{m-\kappa,p}}\frac{\psi_{\kappa,p}\psi_{h,p}}{r} dr
	\nonumber
	\\
	&&= I_{1}(p,K)+ I_{2}(p,K)+I_{3}(p,K)+I_{4}(p,K).\end{eqnarray}

First, as both \eqref{AlternativaImpossibile} and \eqref{quellaNonNulla} hold true, we can take $\d>0$ so that
		\begin{equation}\label{contrad} \delta< \min\left\{ \frac{1}{4}, \frac{|A_{\kappa}A_{h}|}{4} \right\}.
		\end{equation}
		Since  $\int_{0}^{\infty}\frac{(\eta^{m-\kappa})^{2}}{r} dr=1$, there exists $K_{1}(\delta)>1$ such that
		\begin{equation}
		\label{Aintgrande}
		\int_{\frac{1}{K}}^{K}\frac{(\eta^{m-\kappa})^{2}}{r} dr\geq 1-\delta, \qquad \forall K\geq K_{1}(\delta).
		\end{equation}
		Moreover, by  H\"older inequality  and Lemma \ref{Lemma:NormaL2pesataPiccolaSuInsiemini}, we can take $K>K_1(\delta)$ and accordingly $p_1(\delta,K)$ such that
		\begin{equation}\label{I3Zero} |I_{1}(p,K)|:=|\int_{G_{p}(K)}
		\frac{\psi_{\kappa,p}\psi_{h,p}}{r} dr|\leq 
		\left[\int_{G_{p}(K)}\frac{(\psi_{\kappa,p})^{2}}{r}dr\right]^{\frac{1}{2}}
		\left[\int_{G_{p}(K)}\frac{(\psi_{h,p})^{2}}{r}dr\right]^{\frac{1}{2}}\leq \delta
		\end{equation}
		for every $p>p_1(\delta, K)$.

	For the second term we rescale according to the parameter $\varepsilon_{0,p}$ and exploits the $C^{1}_{\loc}[0,\infty)$ convergences of $\psi_{h,p}^{0}$ to $\overline\psi_h^0$ in \eqref{convergenzebuoneh0} and of $\psi_{\kappa,p}^{0}$ to $\overline\psi_{\kappa}^0$ in \eqref{convergenzebuoneKappa0}, we then get
	\begin{eqnarray}\label{I1Zero}|I_{2}(p,K)|&:=&|\int_{0}^{K\varepsilon_{0,p}}
	\frac{\psi_{\kappa,p}\psi_{h,p}}{r} dr |=| \int_{0}^{K}\frac{\psi_{\kappa,p}^{0}\psi_{h,p}^{0}}{r} dr |
	\nonumber
	\\
	&=&|\int_0^K\frac{\overline\psi_{\kappa}^0\overline\psi_h^0}{r}dr|+o_p(1)=o_p(1)\leq \delta, 
	\end{eqnarray}  for any $p\geq p_{2}(\delta,K)$,
	where the last equality follows from the fact that $\overline\psi_{\kappa}^0=0$ by \eqref{quelleNulle}.
	Similarly (scaling with parameter $\varepsilon_{i,p}$ and exploiting the convergences in \eqref{convergenzebuoneh} and  \eqref{convergenzebuoneKappa}) we also get
	\begin{eqnarray}
	\label{I2Zero} 
	|I_{3}(p,K)|&:=&|\sum_{\substack{i=1\\i\neq m-\kappa}}^{m-1} \int_{\frac{1}{K}\varepsilon_{i,p}}^{K\varepsilon_{i,p}}\frac{\psi_{\kappa,p}\psi_{h,p}}{r} dr|=|
	\sum_{\substack{i=1\\i\neq m-\kappa}}^{m-1} \int_{\frac{1}{K}}^{K}\frac{\psi_{\kappa,p}^{i}\psi_{h,p}^{i}}{r} dr|
	\\
	&\leq &\sum_{\substack{i=1\\i\neq m-\kappa}}^{m-1} |\int_{\frac{1}{K}}^{K}\frac{\overline\psi_{\kappa}^i\overline\psi_h^i}{r}dr|+o_p(1)=o_{p}(1)
	\leq \delta ,\end{eqnarray}  for any $p\geq p_{3}(\delta,K)$,
	where the last equality follows from the fact that $\overline\psi_{\kappa}^i=0$, for any $i=1,\ldots, m-1$, $i\neq m-\kappa$ by \eqref{quelleNulle}.  Hence, substituting  \eqref{I3Zero}, \eqref{I1Zero}, \eqref{I2Zero} into \eqref{ortogonalitaDiPartenza}, one gets
	\[|I_{4}(p,K)|\leq 3\delta,\qquad \forall p\geq \max\{p_{1}(\delta),p_{2}(\delta,K),p_{3}(\delta,K)\}.\]
	On the other side, scaling with parameter $\varepsilon_{m-\kappa,p}$, passing to the limit thanks to \eqref{convergenzebuoneh} and  \eqref{convergenzebuoneKappa} with $i=m-\kappa$,
	we also get
	\begin{eqnarray*}
		I_{4}(p,K)&:=&\int_{\frac{1}{K}\varepsilon_{m-\kappa,p}}^{K\varepsilon_{m-\kappa,p}}\frac{\psi_{\kappa,p}\psi_{h,p}}{r} dr
		=
		\int_{\frac{1}{K}}^{K}\frac{\psi_{\kappa,p}^{m-\kappa}\psi_{h,p}^{m-\kappa}}{r} dr
		=
		\int_{\frac{1}{K}}^{K}\frac{\overline\psi_{\kappa}^{m-\kappa}\overline\psi_{h}^{m-\kappa}}{r} dr+o_p(1)
		\\
		&=&
		A_{\kappa}A_{h}\int_{\frac{1}{K}}^{K}\frac{(\eta^{m-\kappa})^{2}}{r} dr+o_{p}(1), 
	\end{eqnarray*}
	as $p\rightarrow +\infty$, where the last equality follows from 
	\eqref{quellaNonNulla} and 
	\eqref{AlternativaImpossibile}.  
	Eventually, passing to the limit for  $p\to\infty$ yields
		\[  |A_{\kappa}A_{h}|\int_{\frac{1}{K}}^{K}\frac{(\eta^{m-\kappa})^{2}}{r} dr\leq 3 \delta , \]
		or equivalenty
		\[  |A_{\kappa}A_{h}| \le \frac{3 \delta}{\int_{\frac{1}{K}}^{K}\frac{(\eta^{m-\kappa})^{2}}{r} dr} \underset{\eqref{Aintgrande}} \le  \frac{3 \delta}{1-\delta}  .\]
		But this last inequality clashes with \eqref{contrad} because
		\[ \frac{3 \delta}{1-\delta} \underset{\delta <  \frac{|A_{\kappa}A_{h}|}4}{<} \frac{3}{4(1-\delta)} |A_{\kappa}A_{h}| \underset{\delta <  \frac{1}4}{<} |A_{\kappa}A_{h}| .\]
		In that way we have reached a contradiction and the proof is completed.
\end{proof}

\

\

\subsection{Last eigenvalue: the proof of Proposition \ref{prop:autovAutofUltimo}}
Here we  prove Proposition \ref{prop:autovAutofUltimo}, thus ending the proof of  Theorem \ref{thm:autovaloriAutofunzioni}. 

\begin{proof}[Proof of Proposition \ref{prop:autovAutofUltimo}]
	Comparing the estimates \eqref{est-rse} and \eqref{est-lim-rse} (for $h=m$) and recalling that $\beta_0=-1$ by \eqref{ordine-beta} yields \[\lim\limits_{p\to+\infty}\nu_m(p) = \beta^0 =-1.\] Proposition \ref{lem:conv-autof-resc} and Corollary \ref{remark}.{\it(ii)} give that
	\begin{align}
	\label{Ai=0} 		 \psi_{m,p}^0 &\to A_m \eta^0  & \\
	\label{0i>0} 		 \psi_{m,p}^i & \to \overline\psi^i_m = 0  & \text{ for } i=1, \dots m-1 , 
	\end{align}
	where the convergence is weak in $\mathcal D_{\mathrm{rad}}$, strong in $L^2_{\loc}(0,\infty)$, and also strong in $C^1_{\loc}(0,\infty)$ thanks to Lemma \ref{lemma:C1loc}.
	It remains to check that the constant $A_m$ in \eqref{Ai=0} is not zero.
	
	Let $\delta>0$, $K=K(\delta)$ and $p\geq p (\delta )$ where $K(\delta)$ and $p(\delta)$ are as in Lemma \ref{lemma-stima-fp} . Following the ideas in   \cite[Proposition 3.5]{AG-N2}, from the equation \eqref{S-L}  we deduce  that 
	\begin{align*}
	-\nu_m(p)&=-\int_0^1 r[(\psi_{m,p}')^2-V_p(\psi_{m,p})^2]dr \le \int_0^1 rV_p(\psi_{m,p})^2dr\\
	& = \int_{G_p(K)}  rV_p(\psi_{m,p})^2 dr+\int_{0}^{K\varepsilon_{0,p}}
	rV_p(\psi_{m,p})^2 dr  +\sum_{i=1}^{m-1} \int_{\frac{1}{K}\varepsilon_{i,p}}^{K\varepsilon_{i,p}} rV_p(\psi_{m,p})^2 dr 
	\\
	&=   I_1(p)+I_2(p)+I_3(p).
	\end{align*}
	Hence   the normalization \eqref{normalization} of the eigenfunction and the estimate obtained in Lemma \ref{lemma-stima-fp} imply 
	\begin{align*}
	I_1(p)=& \int_{G_p(K)}  f_{p} \frac{(\psi_{m,p})^2}{r}dr \le \sup_{G_p(K)}f_p \leq \delta.
	\end{align*}
Furthermore rescaling each integral according to $\e_{i,p}$ gives
	\begin{align*}
	I_3(p)=& \sum_{i=1}^{m-1} \int_{\frac{1}{K}\varepsilon_{i,p}}^{K\varepsilon_{i,p}} rV_p(\psi_{m,p})^2 dr =
	\sum\limits_{i=1}^{m-1} \int_{\frac 1K}^K r  V^i_p (\psi^i_{m,p})^2 dr \le \delta,
	\end{align*}
	for $p\geq p_{3}(\delta,K)$, thanks to \eqref{pot-lim-i} and \eqref{0i>0}.
	\\
	Finally rescaling according to $\e_{0,p}$  and using the uniform convergence in \eqref{pot-lim-0} and the $L^2$ convergence in \eqref{Ai=0} one has
	\begin{align*}
	I_2(p) &= \int_{0}^{K}
	r  V_p^0 (\psi_{m,p}^0)^2 dr \\
	& = (A_m)^2 \int_{0}^{K}
	r e^{Z_0} (\eta^0)^2 dr + o_p(1) \le (A_m)^2 \int_{0}^{K}
	r e^{Z_0} (\eta^0)^2 dr + \delta
	\end{align*} 
	provided that $p\ge p_2(\delta, K)$.
	Notice that Lemma \ref{lemma:C1loc} does not guarantee the convergence in $C^1_{\loc}[0,\infty)$, since $\beta_{0}=-1$.\\	
	Summing up we have showed that
	\[ -\nu_m(p) \le (A_m)^2 \int_{0}^{K}
	r e^{Z_0} (\eta^0)^2 dr + 3\delta ,\]
	provided that $p\ge \max\{p(\d), p_2(\d,K), p_3(\d,K)\}$. Eventually 
	\[ 1= \limsup\limits_{p\to\infty} (-\nu_m(p)) \le (A_m)^2 \int_{0}^{K}
	r e^{Z_0} (\eta^0)^2 dr ,\]
	from which  $A_m\neq 0$ follows.
\end{proof}

\section{The proof of  Theorem \ref{theorem_Main_Intro} in the case $\alpha=0$}

\label{proofMorseLaneEmden}

In this section we compute the exact value of the Morse index of the radial solution $u_{p}$ of the Lane-Emden problem \eqref{LE}, proving that formula \eqref{formulaLaneEmdenIntro} holds if $p$ is sufficiently large.\\

This result follows directly  from  formula \eqref{morse-index-formula} and from  the asymptotic behavior of the singular eigenvalues $\nu_{j}(p)$, $j=1,\ldots, m-1$, as $p\rightarrow +\infty$, which has been stated in Theorem \ref{INTROthm:autovaloriAutofunzioni} (cfr. the more general version Theorem \ref{thm:autovaloriAutofunzioni}).\\

\begin{proof}[Proof of \eqref{formulaLaneEmdenIntro}]
Let $u_{p}$ be the solution to the Lane-Emden problem \eqref{LE} having $m-1$ interior zeros.
From formula \eqref{morse-index-formula} we know that the Morse index $\textsf{m}(u_p)$ is given implicitly  in terms of the negative radial eigenvalues $\nu_{j}(p)$, $j=1,\ldots, m-1$, of the singular problem \eqref{S-L}.  Moreover from Theorem \ref{thm:autovaloriAutofunzioni} we know that 
\[
\sqrt{-\nu_j(p)} \to \frac{\theta_{m-j}}{2} \mbox{ as }p\rightarrow +\infty, \mbox{ for }j=1,\dots m-1.
\]
hence,  recalling \eqref{useful2} we see that
\begin{equation}
\label{finfin} \left\lceil \sqrt{-\nu_j(p)} -1 \right\rceil = \left[\frac{\theta_{m-j}}{2}  \right]=4(m-j)+1 
\end{equation}
	for $p$ large.
The conclusion follows from formula \eqref{morse-index-formula} and \eqref{finfin}, indeed for $p$ large 
\[
 	\textsf{m}(u_{p}) = m  + 2 \sum\limits_{i=1}^{m-1}\left[ \frac{\theta_i }2 \right]. 
 	\]
\end{proof}

\

\section{The proof of Theorem \ref{theorem_Main_Intro} in the case $\alpha>0$}\label{sec:Henon} 
At last we exploit the connection between Lane-Emden and H\'enon problem and show how the already performed analysis allows to compute also the Morse index of radial solutions to the H\'enon problem, concluding the proof of Theorem \ref{theorem_Main_Intro}.
%Also here we take the number of nodal zones $m$ as fixed and, 
For every $\a>0$, we denote by $u_{\a,p}$ the unique radial solution to 
\begin{equation}\label{H}
\left\{\begin{array}{ll}
-\Delta u = |x|^{\a}|u|^{p-1} u \qquad & \text{ in } B, \\
u= 0 & \text{ on } \partial B,
\end{array} \right.
\end{equation}
with $m$ nodal zones which is positive at the origin. 
In dimension $N=2$ radial solutions to \eqref{H} and \eqref{LE} are linked via the transformation 
\begin{equation}\label{transformation-henon}
u_p(t)= \left(\frac{2}{2+\a}\right)^{\frac{2}{p-1}} u_{\a,p}(r) , \qquad t= r^{\frac{2+\a}{2}},
\end{equation}
where, as in the previous sections, $u_{p}$ denotes the unique radial solution of the Lane-Emden problem with $m$ nodal zones and positive at the origin.
The interested reader can find more details in \cite{GGN16, AG-sing2, IS-arxiv} and the references therein.
The strategy summarized in Section \ref{subsec:Morse} applies also to the H\'enon problem (see \cite{AG-sing1}), indeed the Morse index of $u_{\a,p}$  is equal to the number of the negative eigenvalues $\widehat\Lambda^\a(p)$ of  
\begin{equation}\label{singular-eig-prob-H}
-\Delta \phi-V_{\a,p}(x) \phi = \widehat\Lambda^\a(p) \frac{\phi }{{|x|^2}}  , \qquad \phi \in {\mathcal H}_0(B),
\end{equation}
where now 
\begin{align}\label{V-H}
V_{\a, p}(x) & = p|x|^{\a}| u_{\a,p}(x)|^{p-1}  .
\end{align}
Moreover, similarly as in \eqref{decomposition}, the negative eigenvalues $\widehat\Lambda^\a(p)$ of \eqref{singular-eig-prob-H}  can be decomposed as 
\[\widehat\Lambda^\a(p)=k^{2}+\nu^\a(p),\]
where $\nu^{\a}(p)$ are the eigevalues of the following  singular Sturm-Liouville problem
\begin{equation}\label{S-L-H}
\begin{cases}-(r \, \varphi' )'= r \left( V_{\a,p} + \frac{\nu^{\a}(p)}{r^2} \right) \varphi & \text{ as } 0 < r < 1 , \\
\phi\in {\mathcal H}_{0,\mathrm{rad}}. & \end{cases}
\end{equation} 
%Hence the Morse index formula \eqref{morse-index-formula} computes also the Morse index of $u_{\a,p}$, up to replace the singular eigenvalues $\nu_j(p)$ with $\nu_{\a,j}(p)$.
Using the transformation $t=r^{\frac{2+\a}{2}}$  one sees that $\varphi_{\a,p}$ is an eigenfunction for \eqref{S-L-H} related to $\nu^{\a}(p)$ if and only if $\psi_p(t)= \varphi_{\a,p}(r)$ is an eigenfunction for \eqref{S-L} related to the eigenvalue
\begin{equation}
\nu(p) = \left(\frac2{2+\a}\right)^2 \nu^{\a}(p), 
\end{equation}
see \cite[Corollary 4.6]{AG-sing2}. Therefore all the results in Sections \ref{subsec:var-car}, \ref{subsec:comp-morse} can be extended also to the Henon problem, in particular 
\begin{equation}\label{est-rseHH}
\nu^{\a}_{1}(p) < \nu^{\a}_{2}(p)<\dots \nu^{\a}_{ m-1}(p)< -\left(\frac{2+\a}{2}\right)^{2} < \nu^{\a}_{m}(p) <0,  
	\end{equation}
and so
\begin{equation}\label{morse-index-formula-H}
\textsf{m}(u_{\alpha, p})=  m + 2 \sum\limits_{j=1}^{ m} \left\lceil \sqrt{-\nu^{\a}_{j}(p)} -1 \right\rceil  =m + 2 \sum\limits_{j=1}^{ m} \left\lceil \frac{2+\a}{2}\sqrt{-\nu_{j}(p)} -1 \right\rceil  ,
\end{equation}
for any $p>1$.

%$	\widehat \L_\a (p)$ is negative if and only if 
%\begin{equation}\label{babel-H}
%\frac{2+\a}2 \sqrt{-\nu(p)}>k .
%\end{equation}
%Eventually, the information about the singular Sturm-Liouville problem  \eqref{S-L} recalled in Subsections \ref{subsec:var-car}, \ref{subsec:comp-morse} brings to the following Morse index formula
%\begin{equation}\label{morse-index-formula-H}
%m(u_{\a,p})=  m + 2 \sum\limits_{j=1}^m \left\lceil \frac{2+\a}2\sqrt{-\nu_j(p)} -1 \right\rceil  .
%\end{equation}

\begin{proof}[Proof of \eqref{formulaHenonIntro1} and \eqref{formulaHenonIntro2}]
The claim follows by inserting the limits computed in Theorem \ref{thm:autovaloriAutofunzioni} inside the Morse index formula \eqref{morse-index-formula-H}.
When $j=m$, using also \eqref{est-rse}, one sees that $ \sqrt{-\nu_j(p)} \to 1$ from below, hence
	\[ \left\lceil \frac{2+\a}2 \sqrt{-\nu_m(p)} -1 \right\rceil =  \left\lceil  \frac{\a}2 \right\rceil \]
for large values of $p$,  since the ceiling function is lower semicontinuous and piecewise constant. Notice that, unlike the Lane-Emden case, also the last eigenvalue $\nu^{\a}_{m}(p) = \left(\frac{2+\a}2\right)^2 \nu_{m}(p)$ gives a contribution to the Morse index.
When $j=1,\dots m-1$ it is only known that
\[ \frac{2+\a}2\sqrt{-\nu_j(p)} \to \frac{2+\a}{4} \theta_{m-j} .\]
If the quantity on the right-hand side is non-integer, it follows that
	\[ \left\lceil \frac{2+\a}2 \sqrt{-\nu_j(p)} -1 \right\rceil =  \left\lceil  \frac{(2+\a)\theta_{m-j}}{4} -1 \right\rceil
	=  \left[\frac{(2+\a)\theta_{m-j}}{4} \right]  \]
	for large values of $p$, and formula \eqref{formulaHenonIntro1} follows. 
Otherwise, only the estimate \eqref{formulaHenonIntro2} can be deduced.
\end{proof}
\begin{remark}[Optimal lower bound for the Morse index]\label{Ederson}
Notice that the Morse index grows quadratically with respect to $m$: indeed in the case $\a=0$  \eqref{formulaLaneEmdenIntro} holds, and in the case $\a>0$ we have that
\begin{align}\nonumber \textsf{m}	(u_{\a,p})  & \ge m + 2 \left\lceil \frac{\a}2\right\rceil+ 2 \sum\limits_{k=1}^{m-1} \left[ \frac{2\!+\!\a }4 \theta_k \right] - 2(m-1) \\ \nonumber
	& \ge m + 2 \left\lceil \frac{\a}2\right\rceil + 2 \sum\limits_{k=1}^{m-1} \left[ \frac{\theta_k}{2} \right]  \left( 1 + \left[ \frac{\a}{2}\right] \right)- 2(m-1) \\ \label{stimaPerConfrontoEderson} 
	& = m +  \left( \textsf{m}	(u_{p}) - \textsf{m}_{\mathrm{rad}}	(u_{p})    \right) \left( 1+ \left[ \frac{\a}{2}\right] \right) + 2 \left( \left\lceil \frac{\a}2\right\rceil -m +1 \right) , \end{align}
where $u_p$ denotes the radial solution to the Lane-Emden problem with the same number of nodal zones.
\\
As already recalled, the lower bound \eqref{MorseLoweBound} is not optimal for the H\'enon problem, even in dimension $N\ge 3$.
 In dimension 2 that lower bound has been recently improved in \cite{dSdS19}, by exploiting the monotonicity of the Morse index with respect to the parameter $\a$, obtaining  that 
\begin{equation}\label{EdersonBound}
	\textsf{m}	(u_{\a,p})  \ge m + \left( \textsf{m}	(u_{0,p}) - \textsf{m}_{\mathrm{rad}}	(u_{0,p})    \right) \left( 1+ \left[ \frac{\a}{2}\right] \right),
\end{equation} 
for any fixed $p>1$ and $\a>0$.
The estimate \eqref{stimaPerConfrontoEderson} shows that neither the lower bound \eqref{EdersonBound} is reached for large values of $p$, at least when $\a>2(m-1)$.
\end{remark}

\section{Further results}

We collect here some further consequences of Theorems \ref{thm:autovaloriAutofunzioni} and \ref{theorem_Main_Intro} that, in our opinion, can bring to a better understanding of both the Lane-Emden and the H\'enon problem in planar domains.

\subsection{Symmetric Morse index}
The decomposition technique used for computing the Morse index allow also to compute suitable symmetric Morse indexes of radial solutions and so, by Morse index comparison, to distinguish among radial solutions and least energy solutions in suitable symmetric spaces, in the spirit of \cite{GI20}. 
The key point is that not only the eigenvalues but also the associated eigenfunctions of the singular eigenvalue problem \eqref{singular-eig-prob} decompose, indeed in radial coordinates they can be written as
\begin{equation}\label{decomp-autof}
	\psi_{j,p}(r) \left( A \cos(k\theta) + B \sin(k\theta) \right),
\end{equation}
where   
\begin{itemize}
	\item $\psi_{j,p}$ is a solution to the singular Sturm-Liouville problem \eqref{S-L} related to $\nu_j(p)$, 
	\item  $\cos(k\theta)$, $\sin(k\theta)$ are the eigenfunctions of the Laplace-Beltrami operator on the circle, related to the eigenvalue $k^2$.
\end{itemize}
Explicit formulas computing the Morse index in symmetric spaces by means of the singular eigenvalues can be found in  \cite[Corollaries 4.3, 4.11]{AG-sing1}. The symmetric Morse index can be computed then, for large values of the parameter $p$, by exploiting  Theorem \ref{thm:autovaloriAutofunzioni}.

\

\subsection{Nondegeneracy for large values of $p$}
It is well known that  the radial solutions are radially non-degenerate, meaning that the linearized problem 
\[ -\Delta w = p|x|^{\a} |u_{\a,p}|^{p-1} w  \]
does not have nontrivial solutions in $H_{0,\mathrm{rad}}(B)$ (see \cite{HRS11} for $\a=0$ and \cite{AG-sing2} for $\a>0$).
Nonradial degeneracy (i.e. existence of solutions in $H_0(B)\setminus H_{0,\mathrm{rad}}(B)$), on the other hand, can be characterized in terms of the singular eigenvalues through the condition 
\begin{equation}\label{degenCond}\nu^{\a}(p) = - k^2 ,\end{equation}
see \cite[Proposition 1.3]{AG-sing1}.
So Theorem \ref{thm:autovaloriAutofunzioni}, together with \eqref{useful2}, yields also that
\begin{corollary}\label{cor:nondegeneracy}
For every positive integer $m$, there exists $p^*>1$ such that radial solutions to the Lane-Emden problem \eqref{LE} with $m$ nodal zones are nondegenerate for $p>p^*$. \\
Moreover for every positive integer $m$ and for every $\a>0$ except at most the sequences $\dfrac{4n}{\theta_i} -2$ (for $i=1, \dots m-1$, $n\in \mathbb N$),  there exists $p^*>1$ such that radial solutions to the H\'enon problem \eqref{H} with $m$ nodal zones are nondegenerate for $p>p^*$. 
\end{corollary}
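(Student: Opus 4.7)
The plan is to use the nonradial degeneracy criterion \eqref{degenCond} together with the already established radial nondegeneracy of $u_{\a,p}$: once these are combined, nondegeneracy reduces to verifying that $\nu^\a_j(p) \neq -k^2$ for every $j = 1, \ldots, m$ and every $k \in \mathbb N$. Combining the scaling identity $\nu^\a_j(p) = \bigl(\tfrac{2+\a}{2}\bigr)^2 \nu_j(p)$ with Theorem \ref{thm:autovaloriAutofunzioni} yields
\[
\nu^\a_j(p) \longrightarrow -\left(\frac{(2+\a)\theta_{m-j}}{4}\right)^2 \qquad \text{as } p \to \infty,
\]
so it is enough to check, for $p$ sufficiently large, that none of these limits is reached by a value of the form $-k^2$.

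For $j = 1, \ldots, m-1$ I would argue by pure openness. In the Lane--Emden case, \eqref{useful2} forces $\tfrac{\theta_{m-j}}{2} \in \bigl(4(m-j)+1,\, 4(m-j)+2\bigr)$, so the limit lies strictly inside the open gap between two consecutive negative squares and hence cannot equal any $-k^2$; by continuity $\nu_j(p)$ remains inside the same gap for $p$ large. In the H\'enon case, the hypothesis that $\a$ avoids the sequences $4n/\theta_i - 2$ for $i = 1, \ldots, m-1$ and $n \in \mathbb N$ is precisely the condition $\tfrac{(2+\a)\theta_{m-j}}{4} \notin \mathbb N$, and the same openness argument applies.

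The main obstacle, though mild, concerns the last eigenvalue $\nu^\a_m(p)$. Since $\theta_0 = 2$, its limit is $-\bigl(\tfrac{2+\a}{2}\bigr)^2$, and this may well coincide with some $-k_0^2$: in the Lane--Emden case $k_0 = 1$, and in the H\'enon case $k_0 = \tfrac{2+\a}{2}$ whenever $\a$ is a positive even integer. This prevents a direct openness argument. The key ingredient is the strict separation $\nu^\a_m(p) > -\bigl(\tfrac{2+\a}{2}\bigr)^2$ provided by \eqref{est-rse} and \eqref{est-rseHH}: the limit is approached strictly from above, so for $p$ large $\nu^\a_m(p)$ lies in a one-sided neighborhood of the limit that contains no value of the form $-k^2$; indeed, the possibly critical value $-k_0^2$ is forbidden by the strict inequality itself, while all other $-k^2$ (with $k < k_0$ or $k > k_0$) sit at positive distance from the limit and are therefore eventually avoided. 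This is exactly why the excluded set in the statement involves only $i = 1, \ldots, m-1$ and not $i = 0$.
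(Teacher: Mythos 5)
Your proposal is correct and follows the same route the paper intends: combine the degeneracy criterion \eqref{degenCond} with the scaling $\nu^\a_j(p) = \bigl(\tfrac{2+\a}{2}\bigr)^2\nu_j(p)$, the limits from Theorem \ref{thm:autovaloriAutofunzioni}, and the localization of $\theta_i$ from \eqref{useful2}. The paper states the corollary without spelling out the argument, and your write-up supplies exactly the missing details, including the correct identification of the exceptional $\a$ with the non-integrality of $\tfrac{(2+\a)\theta_{m-j}}{4}$, and the one genuinely non-routine point (the last eigenvalue $\nu^\a_m(p)$, whose limit $-\bigl(\tfrac{2+\a}{2}\bigr)^2$ may be a negative integer squared), which you correctly resolve via the strict one-sided bound in \eqref{est-rse} and \eqref{est-rseHH} rather than by openness alone.
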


	\

\subsection{Bifurcation} 
Observe that
Theorem \ref{theorem_Main_Intro} gives  the values of the Morse index \emph{for $p$ large}.
On the other side  one can also compute the Morse index  \emph{when  $p$ is close to $1$},  
by exploiting the (much easier to derive) asymptotic behavior of the radial solutions as $p\rightarrow 1$ from the right (see for instance \cite{BBGV08,Gro09}), and characterizing it in terms of zeros of suitable Bessel functions of the first kind (see \cite{GI20} for the case $(\alpha, N, m)=(0,2,2)$  and \cite{Ama20p1} for the general case). 
\\
As a consequence  one can now detect values of $p\in (1+\infty)$ where the Morse index changes. This is of course a sufficient condition for degeneracy of the solutions at those values of $p$, which could convey to bifurcation from the curve $p\mapsto u_{\a,p}$, for each radial solution  $u_{\a,p}$.
\\
We refer to \cite{GI20} for the case $(\alpha, N, m)=(0,2,2)$ where $3$ branches of bifurcation have been detected, due to a change in the Morse index caused by the first eigenvalue $\nu_{1}(p)$.
For solutions with more nodal regions other eigenvalues may play a role. To give an idea of what may happen, let us consider for instance the case of the \emph{solution $u_{p}$ of the planar Lane-Emden problem ($\alpha=0$) with $m=3$ nodal regions}.  From \cite{Ama20p1} we know that in this case, for $p$ close to 1
\[\nu_{1}(p)\in (-5^{2},-4^{2}),
\qquad
\nu_{2}(p)\in (-3^{2},-2^{2}),\qquad
\nu_{3}(p)\in (-1,0)
\]
while, from Theorem \ref{INTROthm:autovaloriAutofunzioni} one deduces that for $p$ large
\[
\nu_{1}(p)\in (-10^{2},-9^{2}),\qquad
\nu_{2}(p)\in (-6^{2},-5^{2}),\qquad
\nu_{3}(p)\in (-1,0).
\]
As a consequence it follows that 
\[
m(u_{p})=\left\{\begin{array}{lr}15 &\quad\mbox{ for }p\mbox{ close to }1\\
 31 &\quad\mbox{ for }p\mbox{ large}
\end{array}\right.
\]
respectively, and  moreover there exist $p=p_{k}>1$ for $k=3,4,5$ and $p=\hat p_{k}>1$ for $k=5,6,7,8,9$ at which %{\AL $m(u_{p})$ changes and, in particular,}
%
%\edz{\AL Temo che la cosa sia un po' pi\`u delicata. Teoricamente potrebbe succedere che $\hat p_{k} = p_{k'}$ con $k\neq k'$ e paradossalmente l'indice di Morse potrebbe addirittura non cambiare. Non farei riferimento esplicito al cambio dell'indice di Morse ma solo alla degenerazione}
%
%
the degeneracy condition \eqref{degenCond} is satisfied as follows:
\begin{eqnarray*}
&&\nu_{2}(p_{k})=-k^{2}, \qquad \mbox{ for }k=3,4,5\\
&&\nu_{1}(\hat p_{k})=-k^{2}, \qquad\mbox{ for }k=5,6,7,8,9,
\end{eqnarray*}
thus involving the first two eigenvalues $\nu_{1}(p)$ and $\nu_{2}(p)$. 
Those  $p_{k}, \hat p_{k}$ are the values of $p$ at which one expects that $u_{p}$ bifurcates.  In \cite{FazekasPacellaPlum_ultimo}  some numerical results in this direction have been indeed obtained, see also  \cite[Proposition 4.5]{Ama20bif} where bifurcations at $\hat p_{k}$ (hence from the first eigenvalue) is proved.
%\edz{{\AL \\  Poi ho una domanda: la biforcazione numerica la trovano anche partendo da $\nu_2$? si sa qualcosa sulle simmetrie di quelle soluzioni?}}


\begin{thebibliography}{10}
	
	\bibitem{AP04}
	A.~Aftalion and F.~Pacella.
	\newblock Qualitative properties of nodal solutions of semilinear elliptic
	equations in radially symmetric domains.
	\newblock {\em Compt. Rendus Math.}, 339:339, 2004.
	
	\bibitem{Ama20bif}
	A.~L. Amadori.
	\newblock Global bifurcation for the {H\'e}non problem.
	\newblock {\em Communications on Pure and Applied Analysis}, 19(10), 2020.
	
	\bibitem{AG-sing1}
	A.~L. Amadori and F.~Gladiali.
	\newblock On a singular eigenvalue problem and its applications in computing
	the {M}orse index of solutions to semilinear {PDE’s}.
	\newblock {\em Nonlinear Analysis: Real World Applications}, 55:103133, 2020.
	
	\bibitem{Ama20p1}
	A.L. Amadori.
	\newblock On the asymptotically linear {H\' e}non problem.
	\newblock {\em Communications in Contemporary Mathematics}, Article number
	2050042, 2020.
	
	\bibitem{AG14}
	A.L. Amadori and F.~Gladiali.
	\newblock Bifurcation and symmetry breaking for the {H\'e}non equation.
	\newblock {\em Advances in Differential Equations}, 19(7/8):755--782, 2014.
	
	\bibitem{AG_N3}
	A.L. Amadori and F.~Gladiali.
	\newblock Asymptotic profile and morse index of nodal radial solutions to the
	{H\'e}non problem.
	\newblock {\em Calculus of Variations and Partial Differential Equations},
	58(5):1--47, September 2019.
	
	\bibitem{AG-N2}
	A.L. Amadori and F.~Gladiali.
	\newblock The {H}{\'e}non problem with large exponent in the disc.
	\newblock {\em Journal of Differential Equations}, 268(10):5892--5944, 2020.
	
	\bibitem{AG-sing2}
	A.L. Amadori and F.~Gladiali.
	\newblock On a singular eigenvalue problem and its applications in computing
	the {M}orse index of solutions to semilinear {PDE}'s: part {II}.
	\newblock {\em Nonlinearity}, 33(6):2541--2561, apr 2020.
	
	\bibitem{AP87}
	F.V. Atkinson and L.A. Peletier.
	\newblock Elliptic equations with nearly critical growth.
	\newblock {\em Journal of Differential Equations}, 70(3):349--365, 1987.
	
	\bibitem{BdG06}
	T.~Bartsch and M.~Degiovanni.
	\newblock Nodal solutions of nonlinear elliptic {D}irichlet problems on radial
	domains.
	\newblock {\em Atti Accad. Naz. Lincei Cl. Sci. Fis. Mat. Natur. Rend. Lincei
		(9) Mat. Appl.}, 17:69, 2006.
	
	\bibitem{BW03}
	T.~Bartsch and T.~Weth.
	\newblock A note on additional properties of sign changing solutions to
	superlinear equations.
	\newblock {\em Topol. Methods Nonlinear Anal.}, 22:1, 2003.
	
	\bibitem{BW93}
	T.~Bartsch and M.~Willem.
	\newblock Infinitely many radial solutions of a semilinear elliptic problem on
	{$\mathbb R^N$}.
	\newblock {\em Archive for Rational Mechanics and Analysis}, 124(3):261--276,
	1993.
	
	\bibitem{BBGV08}
	D.~Bonheure, V.~Bouchez, C.~Grumiau, and J.~Van~Schaftingen.
	\newblock Asymptotics and symmetries of least energy nodal solutions of
	lane-emden problems with slow growth.
	\newblock {\em Communications in Contemporary Mathematics}, 10(4):609--631,
	2008.
	
	\bibitem{ChenLin}
	C.C. Chen and C.S-Lin.
	\newblock Mean field equations of liouville type with singular data: sharper
	estimates.
	\newblock {\em Discrete Cont. Dyn. Syst.}, 28(3):1237--1272, 2010.
	
	\bibitem{dSdS19}
	W.~L. da~Silva and E.~M. dos Santos.
	\newblock Monotonicity of the {M}orse index of radial solutions of the {H\'
		e}non equation in dimension two.
	\newblock {\em Nonlinear Analysis: Real World Applications}, 48:485--492, 2019.
	
	\bibitem{DIP-N2}
	F.~De~Marchis, I.~Ianni, and F.~Pacella.
	\newblock Exact {M}orse index computation for nodal radial solutions of
	{L}ane-{E}mden problems.
	\newblock {\em Mathematische Annalen}, 367(1):185--227, 2017.
	
	\bibitem{DIP-N>3}
	F.~De~Marchis, I.~Ianni, and F.~Pacella.
	\newblock A {M}orse index formula for radial solutions of {L}ane–{E}mden
	problems.
	\newblock {\em Advances in Mathematics}, 322:682--737, 2017.
	
	\bibitem{dSP17}
	E.~M. dos Santos and F.~Pacella.
	\newblock Morse index of radial nodal solutions of {H\' e}non type equations in
	dimension two.
	\newblock {\em Commun. Contemp. Math.}, 19, 2017.
	
	\bibitem{FazekasPacellaPlum_ultimo}
	B.~Fazekas, F.~Pacella, and M.~Plum.
	\newblock Approximate nonradial solutions for the lane-emden problem in the
	ball.
	\newblock {\em Preprint}.
	
	\bibitem{FN19}
	P.~Figueroa and S.~L.~N. Neves.
	\newblock Nonradial solutions for the {H\'e}non equation close to the
	threshold.
	\newblock {\em Advanced Nonlinear Studies}, 2019.
	
	\bibitem{GGN16}
	F.~Gladiali, M.~Grossi, and S.L.N. Neves.
	\newblock Symmetry breaking and morse index of solutions of nonlinear elliptic
	problems in the plane.
	\newblock {\em Communications in Contemporary Mathematics}, 18(5), 2016.
	
	\bibitem{GGPS11}
	F.~Gladiali, M.~Grossi, F.~Pacella, and P.N. Srikanth.
	\newblock Bifurcation and symmetry breaking for a class of semilinear elliptic
	equations in an annulus.
	\newblock {\em Calculus of Variations and Partial Differential Equations},
	40(3):295--317, 2011.
	
	\bibitem{GI20}
	F.~Gladiali and I.~Ianni.
	\newblock Quasi-radial solutions for the {L}ane-{E}mden problem in the ball.
	\newblock {\em Nonlinear Differential Equations and Applications NoDEA},
	27(2):13, 2020.
	
	\bibitem{Gro09}
	M.~Grossi.
	\newblock On the shape of solutions of an asymptotically linear problem.
	\newblock {\em Annali della Scuola Normale - Classe di Scienze}, 8(3):429--449,
	2009.
	
	\bibitem{GGP14}
	M.~Grossi, C.~Grumiau, and F.~Pacella.
	\newblock {L}ane {E}mden problems with large exponents and singular {L}iouville
	equations.
	\newblock {\em Journal des Mathematiques Pures et Appliquees}, 101(6):735--754,
	2014.
	
	\bibitem{GST20}
	M.~Grossi, A.~Salda{ñ}a, and H.~Tavares.
	\newblock Sharp concentration estimates near criticality for radial
	sign-changing solutions of dirichlet and neumann problems.
	\newblock {\em Proceedings of the London Mathematical Society}, 120(1):39--64,
	2020.
	
	\bibitem{Hen73}
	M.~{H\' e}non.
	\newblock Numerical experiments on the stability of spherical stellar systems.
	\newblock {\em Astronom. Astrophys.}, 24:229, 1973.
	
	\bibitem{HRS11}
	A.~Harrabi, S.~Rebhi, and A.~Selmi.
	\newblock Existence of radial solutions with prescribed number of zeros for
	elliptic equations and their {M}orse index.
	\newblock {\em Journal of Differential Equations}, 251(9):2409 -- 2430, 2011.
	
	\bibitem{IS-arxiv}
	I.~Ianni and A.~Salda{ñ}a.
	\newblock Sharp asymptotic behavior of radial solutions of some planar
	semilinear elliptic problems.
	\newblock {\em arXiv preprint arXiv:1908.10503}, 2019.
	
	\bibitem{Kajikiya}
	R.~Kajikiya.
	\newblock Sobolev norm of radially symmetric oscillatory solutions for
	super-linear elliptic equations.
	\newblock {\em Hiroshima Math. J.}, 20:259--276, 1990.
	
	\bibitem{KW19}
	J.~Kubler and T.~Weth.
	\newblock Spectral asymptotics of radial solutions and nonradial bifurcation
	for the {H\' e}non equation.
	\newblock {\em Discrete Contin. Dyn. Syst., Ser.}, 40:3629, 2019.
	
	\bibitem{Lin95}
	S.S. Lin.
	\newblock Asymptotic behavior of positive solutions to semilinear elliptic
	equations on expanding annuli.
	\newblock {\em Journal of Differential Equations}, 120(2):255--288, 1995.
	
	\bibitem{NiN1985}
	W.~M. Ni and R.~D. Nussbaum.
	\newblock Uniqueness and nonuniqueness for positive radial solutions of
	{$\Delta u + f(u, r) = 0$}.
	\newblock {\em Comm. Pure Appl. Math.}, 38:67, 1985.
	
	\bibitem{Ni82}
	W.~N. Ni.
	\newblock A nonlinear {D}irichlet problem on the unit ball and its
	applications.
	\newblock {\em Indiana Univ. Math. J.}, 31:801, 1982.
	
	\bibitem{SSW02}
	Willem~M. Smets~D. and J.~Su.
	\newblock Non-radial ground states for the {H\' e}non equation.
	\newblock {\em Commun. Contemp. Math.}, 4:467, 2002.
	
\end{thebibliography}
\end{document} 
\bibliographystyle{plain}
\bibliography{References}

